\documentclass[11pt]{article}

\usepackage[margin=1.5in]{geometry}

\usepackage{amssymb,amsmath,amsthm}
\usepackage{fancyhdr}
\usepackage[dvipsnames]{xcolor}
\usepackage{indentfirst}
\usepackage{url}
\usepackage{verbatim}
\usepackage{asymptote}
\usepackage{hyperref}
\usepackage{graphicx}

\usepackage[justification=centering]{caption}

\theoremstyle{plain}
\newtheorem{theorem}{Theorem}[section]
\newtheorem{corollary}[theorem]{Corollary}
\newtheorem{lemma}[theorem]{Lemma}
\newtheorem{proposition}[theorem]{Proposition}

\newtheorem*{principle}{Main Principle}

\theoremstyle{definition}
\newtheorem{definition}[theorem]{Definition}
\newtheorem{example}[theorem]{Example}

\theoremstyle{remark}

\newtheorem*{remark*}{Remark}

\renewcommand{\L}{\text{L}}

\title{The Penney's Game with Group Action}
\author{Tanya Khovanova, Sean Li}
\date{}
\lhead{PRIMES-USA 2020}
\rhead{Tanya Khovanova, Sean Li}

\begin{document}

\maketitle

\begin{abstract}
Consider equipping an alphabet $\mathcal{A}$ with a group action that partitions the set of words into equivalence classes which we call patterns. We answer standard questions for the Penney's game on patterns and show non-transitivity for the game on patterns as the length of the pattern tends to infinity. We also analyze bounds on the pattern-based Conway leading number and expected wait time, and further explore the game under the cyclic and symmetric group actions.
\end{abstract}

\section{Introduction}

The \textbf{Penney ante game}, or \textbf{Penney's game}, is a two-player game with a fair coin. The two players, Alice and Bob, each pick a word consisting of $H$s and $T$s, where $H$ is for heads and $T$ is for tails. Both words are of the same length. The coin is then flipped repeatedly, and the person who chose the word that appears first is declared the winner. Fixing the words that Alice and Bob choose, what are the odds that Alice wins? 

Many other interesting questions are related to this game. What is the expected amount of flips, also known as the expected wait time, until Alice's word appears? How many words of a given length avoid Alice's word? Bob's word? Both? What is the best choice for Bob if he knows Alice's word?

The game first appeared in 1969 as a problem submitted to the \textit{Journal of Recreational Mathematics} by Walter Penney \cite{WP}. It was later popularized by Martin Gardner \cite{MGSA,MG}, who introduced the Conway leading numbers that allow us to easily calculate the odds for the game. Not only that, but many things can be expressed through Conway leading numbers. For example, the expected wait time for a particular word is twice the Conway leading number. The same method is described in the \textit{Winning Ways for Your Mathematical Plays} \cite{BCG}. Collings in 1982 \cite{C} generalized Conway leading numbers to the case when we do not start from scratch, but from a given string.

This game can be generalized to alphabets consisting of more than two letters. The generalization of these results to larger alphabets was explored by Guibas \& Odlyzko in 1981 \cite{GO}.

Penney's game is a famous classic example of a \emph{non-transitive game}. For example, the word $HHT$ has better odds than $HTT$, the word $THH$ has better odds than $HHT$, while the word $THH$ does not have better odds than $HTT$. In addition, Bob can always choose a word with better odds that Alice, no matter what word she chooses. The best choice for Bob for any alphabet was discussed in \cite{GO} and later finalized by Felix in 2006 \cite{F}.

We cover all the details related to the original game in Section~\ref{sec:originalproblem}.

Although not within the scope of this paper, there are many preexisting variations of Penney's game. A popular variation uses various other objects in place of a coin, such as a finite deck of cards in Humble \& Nishiyama \cite{HN} or a roulette wheel in Vallin \cite{V}. Many more variations of this game were studied by Agarwal et al.~\cite{STEPPenney}.

In this paper, we introduce a variation of this game where Alice chooses a pattern, representing a collection of words, rather than a single word. For example, she can bet that three identical flips will appear first. In response, Bob can bet that three flips that form an alternating sequence of heads and tails will appear first instead.

Formally, we define a group action on the alphabet that swaps $H$ and $T$ and lets two words be equivalent if they are within the same orbit of the group action. In our particular example above, Alice bets on the two words $HHH$ and $TTT$, while Bob bets on the two words $HTH$ and $THT$.

We generalize this example to any group action. The goal of the paper is to generalize the classic Penney's game within this new framework. To do this, we calculate the odds of winning depending on the chosen patterns, and also expand upon many known results for the original game. We present the following road-map of the paper.

We start Section~\ref{sec:piwga} with a motivating example, then define the group action and the notion of a pattern. We also list interesting potential groups for study. 

In Section~\ref{sec:GroupActionAndCorrelation}, we define the correlation, correlation polynomial, period, and Conway leading number for two patterns. We make a few expository claims about their properties and discuss the similarities and differences between the theory of patterns and the theory of words.

In Section~\ref{sec:gfs}, we prove the theorem that describes the generating functions that avoid a given set of patterns and also the functions when a particular pattern appears for the first time while avoiding other patterns.

In Section~\ref{sec:ewt}, we calculate the expected wait time of a given pattern. We discuss patterns of fixed length with maximal and minimal wait times.

In Section~\ref{sec:odds}, we calculate the odds of Alice winning the game, given the choice of patterns for Alice and Bob, thus generalizing Conway's formula. We use this result to also compute the expected length of the game on two patterns.

In Section~\ref{sec:cyclic}, we concentrate on the cyclic group and the group action that cycles all the letters of the alphabet. We show via bijection that the game on patterns under a cyclic group is equivalent to the game on words that are one letter shorter.

In Section~\ref{sec:symmetric}, we concentrate on a symmetric group: the group that shuffles all the letters of the alphabet. We provide many examples and calculate the exact lower bound for Conway leading numbers. We find that for this group, it is not always possible for Bob to find a word with better odds than Alice's word. More precisely, there exist words that have better odds than other words of the same length.

\section{The original problem}\label{sec:originalproblem}

In this section, we cover what is known about Penney's game. For more detail one can check \cite{BCG,C,F,MGSA,MG,GO,WP}.

The \textbf{Penney's game} is a two-player game with a fair coin. The two players, Alice and Bob, each pick a word consisting of $H$s and $T$s. The coin is then flipped repeatedly, and the person who chose the word that appears first is declared the winner. For instance, say that Alice and Bob pick $HHH$ and $HTH$, respectively. Then in the sequence of flips $HHTTHTTT\underline{HTH}$, Bob wins on the eleventh flip. A natural question then arises. Given Alice's and Bob's words, what are the odds that either player wins?

Penney's game is a classic \emph{non-transitive game}. Here is an example of a loop of preferences:
\begin{itemize}
    \item if Alice picks $HTT$, Bob picks $HHT$ to have a higher chance of winning;
    \item if Alice picks $HHT$, Bob picks $THH$ to have a higher chance of winning;
    \item if Alice picks $THH$, Bob picks $TTH$ to have a higher chance of winning; and
    \item if Alice picks $TTH$, Bob picks $HTT$ to have a higher chance of winning.
\end{itemize}

Moreover, if Bob can choose his word after he knows Alice's word, he always has a winning strategy regardless of the word Alice chooses. For words of length 3, Bob can always find a word that makes his odds at least 2 to 1, see for example~\cite{BCG,F}. The fact that Bob can always choose a word of the same length as Alice's word with better odds implies non-transitivity.

\subsection{Correlation polynomials and Conway leading numbers}

The explicit probabilities of winning are also known for any pair of words.  In the literature, this theory is expressed in terms of correlation polynomials, or equivalently, 
Conway leading numbers. 

We first fix an alphabet $\mathcal{A}$ of $q$ letters. In the classical Penney's game, the alphabet consists of two letters: $H$ and $T$.

Consider a word $w = w(1)w(2)\dots w(\ell)$ of length $\ell$ in our alphabet $\mathcal{A}$.
The substring of letters from $w$ between the $i$-th and $j$-th letter inclusive,  that is $w(i)w(i+1) \dots w(j)$ is denoted by $w(i,j)$. In this paper we are mostly interested in prefixes $w(1,j)$ and suffixes $w(\ell-j+1,\ell)$ of length $j$.

We now define the \textbf{autocorrelation} of a word $w$ of length $\ell$, that is the correlation of a word with itself.

The autocorrelation vector $C(w,w)$ of $w$ is a vector $(C_{0},\dots ,C_{\ell-1})$, where $C_{i}$ is equal to $1$ if the first $\ell-i$ letters match the last $\ell-i$ letters, i.e.
\[w(1,\ell-i) = w(i+1,\ell),\]
and is equal to $0$ otherwise. Note that $C_0 = 1$ for any word. The \textbf{autocorrelation polynomial} of $w$ is defined as $c_{w,w}(z)=C_{0}z^{0}+\dots +C_{\ell-1}z^{\ell-1}$. It is a polynomial of degree at most $\ell-1$. As $C_0 = 1$, we have $C_{w,w}(0) = 1$ for all words $w$.

\begin{example}
The autocorrelation vector $C(HTHT,HTHT)$ is $(1,0,1,0)$. The corresponding autocorrelation polynomial is $1+x^2$.
\end{example}

The \textbf{Conway leading number (CLN)} for a given word $w$ is defined as the value of the correlation vector viewed as a string written in base $q$. The Conway leading number is denoted as  $w\L w$. Namely, the Conway leading number equals
\[w\L w= q^{\ell-1}C_{w,w}\left(\frac{1}{q}\right).\]

\begin{example}
With an alphabet of size $2$, the Conway leading number of $w = HTHT$ is $w\L w = 1010_2 = 10$.
\end{example}

Note that the autocorrelation vector and polynomial do not depend on the size of the alphabet, while the CLN does.

If $C_i = 1 > 0$ for a word $w$, then
\[w(1)w(2) \dots w(\ell-i) = w(i+1)w(i+2)\dots w(\ell).\]
Equivalently, the word $w$ has period $i$: $w(k) = w(k+i)$ for all $1 \leq k \leq \ell - i$.

The coordinates of the autocorrelation vector depend on each other. 
If a word has period $s$, it also has period $ks$ for all $k \geq 1$. If a word has periods $s$ and $t$, it also has period $s+t$. For example, if $C_1 = 1$, then all the letters in the word are the same, which implies that $C_i = 1$ for all $i < \ell$.

Generally, if a word has periods $s < t$ it does not necessarily have period $t-s$.
\begin{example}
The word $HTHHTH$ has periods $3$ and $5$, but not period $5-3 = 2$.
\end{example}
However, for sufficiently long words the implication is true. The following proposition and its corollary are proven in~\cite{GO}.

\begin{proposition}
If a word $w$ of length $\ell \geq s+t$ has periods $s<t$, then $w$ also has period $t-s$.
\end{proposition}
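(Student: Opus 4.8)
The plan is to establish the desired period relation $w(k) = w(k+(t-s))$ pointwise, for every admissible index $k$, by chaining the two period relations that come from the hypotheses. Writing $d = t-s$, period $s$ supplies $w(m) = w(m+s)$ whenever $1 \le m \le \ell - s$, and period $t$ supplies $w(m) = w(m+t)$ whenever $1 \le m \le \ell - t$. The goal is to show $w(k) = w(k+d)$ for all $1 \le k \le \ell - d$, and I would split the argument into two cases according to the size of $k$, reducing in each case to a short chain of these two relations.

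For the first case, $1 \le k \le \ell - t$, I would apply period $t$ forward to get $w(k) = w(k+t)$, and then apply period $s$ backward to the index $k+t$ to get $w(k+t) = w(k+t-s) = w(k+d)$. Here every index used is immediately seen to lie in range from the bound $k \le \ell - t$, so this case is routine.

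The substantive case is $\ell - t < k \le \ell - d$, where period $t$ cannot be applied to $k$ directly because $k > \ell - t$. Here I would reverse the order of steps: first apply period $s$ backward to obtain $w(k) = w(k-s)$, and then apply period $t$ forward to the index $k-s$ to obtain $w(k-s) = w(k-s+t) = w(k+d)$. The main obstacle is verifying that the intermediate index $k-s$ lies in the valid range for both relations; in particular one needs $k - s \ge 1$, i.e.\ $k > s$. This is exactly where the hypothesis $\ell \ge s+t$ is used: it forces $\ell - t \ge s$, so that $k > \ell - t \ge s$ and hence $k \ge s+1$. The upper bound $k - s \le \ell - t$ required to apply period $t$ is precisely the case assumption $k \le \ell - d$.

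Combining the two cases gives $w(k) = w(k+d)$ for every $1 \le k \le \ell - d$, which is the definition of $w$ having period $d = t-s$. I expect the only delicate point to be the bookkeeping of the index ranges in the second case, and in particular making transparent that the hypothesis $\ell \ge s+t$ is genuinely needed there, rather than merely for convenience.
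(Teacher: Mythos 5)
Your proof is correct, and every index check goes through: in the first case the chain $w(k)=w(k+t)=w(k+t-s)$ only needs $k\le \ell-t$, and in the second case the chain $w(k)=w(k-s)=w(k-s+t)$ needs exactly $k-s\ge 1$ and $k-s\le \ell-t$, which follow from $\ell\ge s+t$ and $k\le \ell-(t-s)$ respectively, as you say. Note that the paper itself does not prove this proposition; it defers to Guibas and Odlyzko \cite{GO}, so there is no in-paper argument to compare against. Your two-case index-chasing argument is the standard elementary proof of this weak Fine--Wilf-type statement, and it is a genuine contribution here that it makes self-contained what the paper leaves as a citation, including pinpointing that the hypothesis $\ell\ge s+t$ is used only in the second case (to guarantee $k>\ell-t\ge s$, so that stepping back by $s$ stays inside the word).
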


\begin{corollary}
\label{cor:period}
If a word $w$ of length $\ell$ has least period $s$ and period $t$ not divisible by $s$, then $t \geq \lfloor (\ell+1)/2 \rfloor + 1$.
\end{corollary}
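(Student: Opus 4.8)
The plan is to argue by contradiction: assume $t \leq \lfloor(\ell+1)/2\rfloor$ and use the preceding proposition repeatedly to force $s \mid t$, contradicting the hypothesis that $t$ is not divisible by $s$. First I would record two easy facts. Since $t$ is a period and $s$ is the \emph{least} period, $s \leq t$; and since $s \mid t$ fails we have $t \neq s$, so in fact $s < t$. Next, from the assumption $t \leq \lfloor(\ell+1)/2\rfloor$ together with $s \leq t-1$, a one-line estimate gives
\[ s + t \leq 2t - 1 \leq 2\left\lfloor \frac{\ell+1}{2}\right\rfloor - 1 \leq \ell, \]
so the length hypothesis $\ell \geq s + t$ of the proposition is met and $w$ has period $t - s$.

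The core of the argument is then a subtractive Euclidean algorithm on the pair of periods. Given any two periods $a < b$ of $w$ with $a + b \leq \ell$, the proposition yields the period $b - a$, producing the new pair $\{a, b-a\}$. I would maintain the invariant that the sum of the current pair of periods stays $\leq \ell$: the new pair has sum $a + (b-a) = b \leq \ell - a < \ell$, so the invariant is preserved and each successive subtraction is again licensed by the proposition. Iterating, the two periods shrink exactly as in the subtractive computation of $\gcd(s,t)$ and terminate at the common value $\gcd(s,t)$, which is therefore itself a period of $w$.

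To finish, I would invoke minimality of $s$. Since $\gcd(s,t) \mid s$ we have $\gcd(s,t) \leq s$, while $\gcd(s,t)$ being a period forces $\gcd(s,t) \geq s$ because $s$ is the least period; hence $\gcd(s,t) = s$, i.e.\ $s \mid t$. This contradicts the hypothesis, so the assumption $t \leq \lfloor(\ell+1)/2\rfloor$ is untenable and $t \geq \lfloor(\ell+1)/2\rfloor + 1$.

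I expect the main obstacle to be bookkeeping rather than insight: one must check that the length condition of the proposition is satisfied not only at the first subtraction but at \emph{every} step of the Euclidean reduction, which is precisely what the invariant ``sum of the current pair $\leq \ell$'' guarantees. A secondary point worth stating carefully is the floor estimate, since the parity of $\ell$ determines whether $s+t$ equals or falls strictly below $\ell$; in both cases the inequality $s + t \leq \ell$ holds, which is all that the argument requires.
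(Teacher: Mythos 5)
Your proof is correct. Note that the paper itself does not prove this corollary at all --- it defers both the proposition and the corollary to the cited reference [GO] --- so your argument supplies a self-contained derivation from the stated proposition, which is exactly the standard route. The key points all check out: $s<t$ follows since $s=t$ would give $s\mid t$; the floor estimate $s+t\le 2t-1\le 2\lfloor(\ell+1)/2\rfloor-1\le\ell$ holds in both parities of $\ell$; and your invariant (the current pair of periods has sum at most $\ell$) is genuinely the crux, since it is what licenses \emph{every} application of the proposition during the subtractive reduction, not just the first --- the new pair $\{a,\,b-a\}$ has sum $b\le\ell-a<\ell$, and $\gcd$ is preserved at each step, so the process terminates at the period $\gcd(s,t)$, whence minimality of $s$ forces $\gcd(s,t)=s$ and $s\mid t$, the desired contradiction. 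One small presentational point: you should say explicitly that ``least period'' means least \emph{positive} period (the paper's convention makes $C_0=1$ for every word, i.e.\ $0$ is always trivially a period), and correspondingly that all periods appearing in your iteration are positive, so the subtraction never degenerates; this is implicit in your write-up and harmless, but worth one sentence.
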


Now we define the correlation polynomial between two words $v$ and $w$ that do not have to be the same length. Let $\ell$ be the length of $v$. The correlation vector $C(v,w)$ is defined as $C=(C_{0},\dots ,C_{\ell-1})$, with $C_{i}$ being 1 if the suffix of $v$ of length $\ell-i$ equals the prefix of $w$ length $\ell-i$, i.e. $v(i-1,\ell) = w(1,\ell-i)$, and 0 otherwise. Then the correlation polynomial of $v$ and $w$ is defined as $C_{v,w}(z)=C_{0}z^{0}+\dots +C_{\ell-1}z^{\ell-1}$. It is a polynomial of degree at most $\ell-1$. Similar to before, the Conway leading number between two words is defined as the value of the correlation vector interpreted as a string in base $q$ and it is denoted as $v\L w$. Specifically, in terms of the correlation polynomial, the Conway leading number is 
\[v\L w = q^{\ell-1} C_{v,w}\left(\frac{1}{q}\right).\]

\begin{example}
The correlation between the words $HTH$ and $HTHT$ is the vector $C(HTH, HTHT) = (1, 0, 1)$, and the correlation polynomial is $C_{HTH,HTHT}(z) = 1 + z^2$. The Conway leading number between the two words is $101_2 = 2^2 (1+\frac1{2^2}) = 5$. Note that $C(HTHT, HTH) = (1,0,1,0)$, so in general correlation is \emph{not} commutative. 
\end{example}

\subsection{Generating functions}

A set of words is \emph{reduced} if no word $w$ is a substring of another word $w'$ in the set. For instance, the set $\{HTH,TTHTH\}$ is not reduced.

Suppose we have a reduced set of $k$ words $S = \{w_1,w_2,\ldots,w_k\}$ with lengths $\ell_1, \ell_2, \dots, \ell_k$, composed of letters from an alphabet of size $q$. Let $A(n,S)$ denote the number of strings of length $n$ which avoid all words in $S$. We define 
$$G(z,S) = \sum_{k=0}^\infty A(k,S) z^{k}$$
to be the generating function which describes the number of words avoiding all words in $S$. Similarly, let $T_{w_i}(n,S)$ denote the number of strings of length $n$ which avoid all words in $S$, except for a final appearance of $w_i$ at the end of the word. We call such strings \textbf{first occurrence} strings. Then we define the generating function
$$G_{w_i}(z,S) = \sum_{k=0}^\infty T_{w_i}(k,S) z^{k}.$$
When the set $S$ in question is obvious, we drop it to shorthand, so $G(z) = G(z,S)$ and $G_{w_i}(z) = G_{w_i}(z,S)$.

\begin{remark*}
The reason we reduce the set $S$ is the following. Suppose that $w$ is a substring of $w'$. Then whenever $w'$ appears at the end of a string, then $w$ will appear too, i.e. $G_{w'}(z, S) = 0$ or a degeneracy.
\end{remark*}

The following theorem on the avoiding set $S$ is proven in~\cite{GO}.

\begin{theorem}
\label{thm:system}
The generating functions $G(z)$, $G_{w_1}(z)$, $G_{w_2}(z)$, $\dots$, $G_{w_k}(z)$ satisfy the following system of linear equations:
$$(1-qz)G(z) + G_{w_1}(z) + G_{w_2}(z) + \dots + G_{w_k}(z) = 1$$
$$G(z) - z^{-\ell_1}C_{w_1, w_1}(z) G_{w_1}(z) - \dots - z^{-\ell_k}C_{w_k, w_1}(z)G_{w_k}(z) = 0$$
$$G(z) - z^{-\ell_1}C_{w_1, w_2}(z) G_{w_1}(z) - \dots - z^{-\ell_k}C_{w_k, w_2}(z)G_{w_k}(z) = 0$$
$$\vdots$$
$$G(z) - z^{-\ell_1}C_{w_1, w_k}(z) G_{w_1}(z) - \dots - z^{-\ell_k}C_{w_k, w_k}(z)G_{w_k}(z) = 0$$
\end{theorem}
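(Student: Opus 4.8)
The plan is to establish each equation through an explicit bijection on finite strings and then pass to generating functions by recording lengths; this is the combinatorial cluster argument of Guibas and Odlyzko, which applies unchanged here because Theorem~\ref{thm:system} concerns ordinary words.

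For the first equation I would start from an arbitrary $S$-avoiding string $x$ and append one of the $q$ letters. The resulting string of length $|x|+1$ either still avoids $S$, or it now contains a member of $S$; in the latter case, since $x$ avoided $S$, the new occurrence must use the final letter, so the result is a first-occurrence string for some $w_i$. Conversely, deleting the last letter of any nonempty $S$-avoiding string yields an $S$-avoiding string, and deleting the last letter of a first-occurrence string destroys its unique occurrence and again yields an $S$-avoiding string. Tracking lengths, the pair (avoiding string, appended letter) is counted by $qzG(z)$, while its image is counted by $\bigl(G(z)-1\bigr)+\sum_i G_{w_i}(z)$, the $-1$ removing the empty string, which is not produced by appending a letter. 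Rearranging gives $(1-qz)G(z)+\sum_i G_{w_i}(z)=1$.

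For the $j$-th equation I would instead append the whole word $w_j$ to an $S$-avoiding string $x$ of length $n$. The string $xw_j$ contains at least one member of $S$ (the trailing $w_j$), and since $x$ avoids $S$ every occurrence ends in a position $p>n$. Taking the leftmost-ending occurrence, say of $w_i$ ending at position $p$, the prefix $(xw_j)(1,p)$ is a first-occurrence string for $w_i$, and the positions after $p$ carry the suffix $w_j(p-n+1,\ell_j)$. The compatibility of the $w_i$ ending at $p$ with the $w_j$ starting at $n+1$ is exactly that the suffix of $w_i$ of length $p-n$ equals the prefix of $w_j$ of the same length, which is the condition recorded by the coefficient $C_s$ in the correlation polynomial $C_{w_i,w_j}(z)$ for $s=\ell_i-(p-n)$. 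Converting lengths to powers of $z$ is then bookkeeping: summing over $x$ counts $xw_j$ by $z^{\ell_j}G(z)$, while for fixed $i$ an overlap of type $s$ contributes a factor $z^{\ell_j-\ell_i+s}$ beyond the first-occurrence prefix, so the image is $\sum_i z^{\ell_j-\ell_i}C_{w_i,w_j}(z)\,G_{w_i}(z)$. Equating and dividing by $z^{\ell_j}$ yields the $j$-th equation.

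The main obstacle is the second bijection. I expect the care to go into verifying that the map sending $x$ to its (first-occurrence string, overlap index) is well defined and invertible: that truncating the first-occurrence string $y$ to length $|y|-\ell_i+s$ genuinely recovers an $S$-avoiding preimage $x$ (it cuts off the trailing $w_i$ of $y$, so the unique occurrence is destroyed), that every overlap index with $C_s=1$ really arises, and that the leftmost occurrence cannot lie strictly inside the appended block. This last point is where reducedness enters, since an occurrence of some $w_i$ contained entirely within $w_j$ would make $w_i$ a substring of $w_j$, which is forbidden; this is precisely what prevents spurious terms and keeps the decomposition a bijection. By contrast, the first equation should be routine.
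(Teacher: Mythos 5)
Your proposal is correct, and it reconstructs essentially the argument the paper relies on: the paper itself gives no proof of Theorem~\ref{thm:system}, citing Guibas--Odlyzko, and your two appending bijections (append a letter for the first equation; append $w_j$ and split at the earliest-ending occurrence for the $j$-th) are exactly that standard proof, with the correct bookkeeping $z^{\ell_j}G(z)=\sum_i z^{\ell_j-\ell_i}C_{w_i,w_j}(z)G_{w_i}(z)$. One small addition: reducedness is needed in a second place beyond the one you name --- if two distinct words of $S$ ended at the same position, the shorter would be a suffix, hence a substring, of the longer --- and this is what makes the word ending at the earliest-ending position unique, so that both the case split in your first bijection and the assignment of the index $i$ in your second are unambiguous.
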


The following corollary is also proven in~\cite{GO}.

\begin{corollary}
If $k=1$, i.e. our set $S$ consists of a single word $w$, we have $$G(z) = \frac{C_{w,w}(z)}{z^{\ell} + (1-qz) C_{w,w}(z)},$$
$$G_{w}(z) = \frac{z^{\ell}}{z^{\ell} + (1-qz) C_{w,w}(z)}.$$
\end{corollary}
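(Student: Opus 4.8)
The plan is to specialize Theorem~\ref{thm:system} to the case $k=1$ and then solve the resulting two-equation linear system by elimination. Writing $w = w_1$, $\ell = \ell_1$, and abbreviating $C = C_{w,w}(z)$, the theorem collapses to
\[
(1-qz)\,G(z) + G_w(z) = 1, \qquad G(z) - z^{-\ell}\,C\,G_w(z) = 0.
\]
The first of these is the normalization equation of the theorem, and the second is the single autocorrelation equation, which is the only correlation equation available when the avoiding set contains just one word.

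First I would use the second equation to express one unknown in terms of the other. Since $C = C_{w,w}(z)$ has constant term $C_{w,w}(0) = 1$, it is invertible as a formal power series, so I may solve for $G_w(z) = z^{\ell} C^{-1} G(z)$. Substituting this into the first equation gives
\[
\bigl[(1-qz) + z^{\ell}C^{-1}\bigr]\,G(z) = 1,
\]
and clearing the factor $C$ yields $G(z) = C/\bigl(z^{\ell} + (1-qz)C\bigr)$, which is the first claimed formula. Back-substituting into $G_w(z) = z^{\ell}C^{-1}G(z)$ then cancels the factor of $C$ and produces $G_w(z) = z^{\ell}/\bigl(z^{\ell} + (1-qz)C\bigr)$, the second formula.

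To make this rigorous I would work over the ring of formal power series and observe that the common denominator $D(z) := z^{\ell} + (1-qz)C_{w,w}(z)$ has constant term $D(0) = C_{w,w}(0) = 1 \neq 0$ (using $\ell \geq 1$, so $z^{\ell}$ contributes nothing at $z = 0$). Hence $D(z)$ is a unit in the power series ring and every division above is legitimate; equivalently, the coefficient matrix of the two-equation system has determinant $-z^{-\ell}D(z)$, a nonzero formal Laurent series, so the system determines $G(z)$ and $G_w(z)$ uniquely and the displayed expressions are genuinely its solutions. Because the argument is a direct elimination from Theorem~\ref{thm:system}, there is no substantial obstacle; the only point requiring a moment's care is this invertibility of $C_{w,w}(z)$ and of $D(z)$, which follows immediately from $C_{w,w}(0) = 1$ as recorded in the definition of the autocorrelation polynomial.
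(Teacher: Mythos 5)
Your proof is correct and is exactly the intended derivation: the paper presents this corollary as an immediate consequence of Theorem~\ref{thm:system} (deferring details to Guibas--Odlyzko), and your elimination of the two-equation $k=1$ system is precisely that argument carried out. The added observation that $C_{w,w}(0)=1$ makes both $C_{w,w}(z)$ and the denominator $z^{\ell}+(1-qz)C_{w,w}(z)$ units in the ring of formal power series is the right way to justify the divisions, so nothing is missing.
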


\begin{remark*}
Observe that the denominator is the same for both $G(z)$ and $G_w(z)$, implying that the corresponding sequences follow the same recurrence relations with different initial terms.
\end{remark*}

\begin{example}
For a word $w=HH$ with $q=2$ and $C(w,w) = 1 + z$, we have 
$$G(z) = \frac{1 + z}{1 - z - z^2}, \quad G_{w}(z) = \frac{z^2}{1 - z - z^2}.$$
The coefficients follow the same recurrence as the Fibonacci numbers.
\end{example}

\subsection{Expected wait time}

When our set $S$ consists of one word $w$, then $G_w(z)$ is the generating function describing the number of string that end with $w$ and do not contain $w$ otherwise. Thus, the expected wait time is $zG'_w(z)$ evaluated at $z=\frac{1}{q}$. The result is the following formula for the expected wait time:
\[q^{\ell}C_{w,w}\left(\frac{1}{q}\right) = q\cdot w\L w.\]
This gives us a closed form for the expected wait time for any word in terms of its autocorrelation. Note that in this case, we are mostly interested where $q=2$, the expected wait time is twice the Conway leading number.

\begin{example}
For the word $w = HTHT$ that we discussed before, we have $w\L w = 1010_2 = 10$. The expected wait time for the word $HTHT$ is twice the Conway leading number which can also be calculated using the autocorrelation polynomial as $2^4(1 + \frac{1}{2^2}) = 20$. 
\end{example}

\subsubsection{Bounds on the wait time}

As $C_0=1$ for any word, the shortest expected wait time for a word of length $\ell$ is $q^\ell$. We call such words \textbf{non-self-overlapping}: no proper suffix is equal to a prefix. For instance, the word $HHTHT$ is non-self-overlapping.

On the other hand, the largest expected wait time is achieved when $C_i = 1$ for all $i < \ell$, which is true for any word consisting entirely of $H$s or entirely of $T$s. For example, the expected wait time for $HHH$ is 14.

\begin{example}
With $(q,\ell) = (2,5)$, the words
\[HHHHT,\ HHHTT, \ HHTHT,\ HHTTT,\ HTHTT,\ HTTTT\]
all have autocorrelation vector $(1,0,0,0,0)$ and CLN of $2^{5-1} = 16$ .
\end{example}

\subsection{Odds for the game}

Going back to Penney's game, suppose Alice's and Bob's words are $w_1$ and $w_2$ respectively. Then from Theorem~\ref{thm:system} we have 
$$(1-qz)G(z) + G_{w_1}(z) + G_{w_2}(z) = 1$$
$$G(z) - z^{-\ell_1 }C_{w_1, w_1}(z) G_{w_1}(z) - z^{-\ell_2 }C_{w_2, w_1}(z) G_{w_2}(z) = 0$$
$$G(z) - z^{-\ell_1 }C_{w_1, w_2}(z) G_{w_1}(z) - z^{-\ell_2 }C_{w_2, w_2}(z) G_{w_2}(z)  = 0.$$
The probability that Alice wins the game is the same as the odds that $w_1$ appears before $w_2$ in a randomly generated string, which is
\[\frac{G_{w_1}(\frac1q)}{G_{w_2}(\frac1q)}.\]
This probability equals
$$\frac{q^{\ell_2}(C_{w_2,w_2}(\frac1q) - C_{w_2,w_1}(\frac1q))}{q^{\ell_1}(C_{w_1,w_1}(\frac1q) - C_{w_1,w_2}(\frac1q))},$$
Or, in terms of Conway leading numbers:
\[\frac{w_2\L w_2 - w_2\L w_1}{w_1\L w_1 - w_1\L w_2}.\]
In classical literature, this is known as \textbf{Conway's formula} for the two-player Penney's game \cite{BCG}.

\begin{example}
Suppose Alice selects $HTHT$, which has an expected wait time of 20 flips. Moreover, let Bob select $THTT$, which has an expected wait time of 18 flips. Surprisingly, despite the fact that Alice's wait time is longer, she wins with probability $\frac{9}{14}$.
\end{example}

The probability that the game takes $n$ flips is exactly $\frac{1}{q^n} (T_{w_1}(n) + T_{w_2}(n))$, and thus the expected length of the game is 
\[\sum_{n=0}^{\infty} \frac{n}{q^n} (T_{w_1}(n) + T_{w_2}(n)) = \frac{G'_{w_1}(\frac1q) + G'_{w_2}(\frac1q)}{q}.\]
In terms of Conway leading numbers,  the expected length of the game is
\[q \cdot \frac{(w_1 \L w_1)(w_2 \L w_2) - (w_1 \L w_2)(w_2 \L w_1)}{(w_1\L w_1 + w_2\L w_2) - (w_1\L w_2 + w_2 \L w_1)}.\]

\subsection{Optimal strategy for Bob}

Suppose Alice picks a word $w_1$, and Bob wants to pick a word $w_2$ to maximize his odds of winning the game. His odds of winning are
\[\frac{w_1 \L w_1 - w_1 \L w_2}{w_2 \L w_2 - w_2 \L w_1},\]
so his best beater would be a word that makes $w_1 \L w_2$ relatively small and $w_2 \L w_1$ relatively big compared to the other CLNs. As shown in \cite{F, GO}, a word $w_2$ of the form $w^* w_1(1) w_1(2) \dots w_1(\ell-1)$, i.e. a word for which $w_2(2,\ell) = w_1(1,\ell-1)$ fits the bill quite nicely. In fact, the following theorem is proven in \cite{GO}.

\begin{theorem}
Bob's best strategy is to pick a word $w_2$ for which $w_2(2,\ell) = w_1(1,\ell-1)$. In fact, this strategy always gives him odds $> 1$ of winning; these odds approach $q/(q-1)$ as $\ell \to \infty$.
\end{theorem}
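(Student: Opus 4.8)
The plan is to work directly from Conway's formula, which expresses Bob's odds of winning as
\[
\frac{w_1 \L w_1 - w_1 \L w_2}{w_2 \L w_2 - w_2 \L w_1},
\]
and to analyze it for the specific choice $w_2 = w^\ast\, w_1(1) w_1(2) \cdots w_1(\ell-1)$, where $w^\ast$ is the single prepended letter Bob is free to choose. The first step is to understand each of the four Conway leading numbers appearing in the formula under this substitution. The two cross-correlations $w_1 \L w_2$ and $w_2 \L w_1$ are the crucial ones, and the defining feature $w_2(2,\ell) = w_1(1,\ell-1)$ is engineered precisely to control them. First I would compute $w_2 \L w_1$: since the suffix $w_2(2,\ell)$ of length $\ell-1$ equals the prefix $w_1(1,\ell-1)$, the correlation $C(w_2, w_1)$ has a guaranteed $1$ in the coordinate $C_1$, contributing a large term $q^{\ell-2}$ to $w_2 \L w_1$. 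Dually, I would argue that $w_1 \L w_2$ tends to be small: a nontrivial overlap would require a suffix of $w_1$ to match a prefix of $w_2$, but prefixes of $w_2$ begin with the arbitrary letter $w^\ast$, so the only forced contribution is the trivial $C_0 = 1$ term of size $q^{\ell-1}$.

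The second step is to turn these correlation computations into bounds on the whole expression. I would write $w_1 \L w_1 = q^{\ell-1} + (\text{lower-order terms from the autocorrelation of } w_1)$ and similarly expand the other three CLNs, keeping careful track of leading powers of $q$. The numerator $w_1 \L w_1 - w_1 \L w_2$ and the denominator $w_2 \L w_2 - w_2 \L w_1$ should each be estimated from above and below. The key observation is that the $q^{\ell-1}$ leading terms of $w_1 \L w_1$ and $w_1 \L w_2$ nearly cancel in the numerator only through their overlap structure, whereas in the denominator the $q^{\ell-2}$ term from $w_2 \L w_1$ survives and dominates. To show odds strictly greater than $1$, I would establish that numerator minus denominator is positive, i.e. that
\[
(w_1 \L w_1 - w_1 \L w_2) - (w_2 \L w_2 - w_2 \L w_1) > 0,
\]
by comparing the relevant autocorrelation and cross-correlation polynomials coordinate by coordinate; this reduces to a combinatorial statement about which periods of $w_1$ and $w_2$ are forced by the shared-substring condition.

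For the asymptotic claim that the odds approach $q/(q-1)$, the third step is to extract the dominant behavior as $\ell \to \infty$. Here I expect the leading term of the denominator to be exactly the $q^{\ell-2}$ contribution from the forced overlap $w_2 \L w_1$, giving denominator $\sim q^{\ell-2}(q-1)$ after accounting for the $w_2 \L w_2$ autocorrelation, while the numerator is dominated by the $q^{\ell-1}$ term of $w_1 \L w_1$ once the cross term $w_1 \L w_2$ is shown to be negligible relative to it. Dividing yields a ratio tending to $q^{\ell-1}/(q^{\ell-2}(q-1)) = q/(q-1)$. To make this rigorous I would bound all lower-order correlation contributions by a geometric series in $1/q$ and show they vanish in the limit after normalization.

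The main obstacle will be the second step: controlling the lower-order correlation terms well enough to guarantee the inequality holds for \emph{every} word $w_1$ and the resulting $w_2$, not merely generically. The difficulty is that $w_1$ may be highly self-overlapping (for instance $HHH\cdots H$), which inflates $w_1 \L w_1$ and also potentially $w_2 \L w_2$ and $w_1 \L w_2$, so the cancellations are delicate. Corollary~\ref{cor:period} is the tool I expect to lean on here, since it restricts how small a period a long word can secretly have, thereby bounding how many extra $1$'s can appear in the correlation vectors and keeping the lower-order terms under control uniformly in $w_1$.
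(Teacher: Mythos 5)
Your outline has the right overall shape---Conway's formula, the period corollary, and an asymptotic comparison of leading powers of $q$; this is indeed the skeleton of the argument (the paper itself defers this word-case theorem to \cite{GO} and \cite{F}, but its own proof of the pattern generalization, Theorem~\ref{thm:pat-bob-wins}, follows exactly this plan). However, there are two genuine gaps. First, your claim that $w_1 \L w_2$ has a ``forced $C_0 = 1$ term of size $q^{\ell-1}$'' is false: $C_0(w_1,w_2)=1$ would mean $w_1 = w_2$, so for distinct words of the same length $C_0(w_1,w_2)=0$. This is not cosmetic: if $w_1 \L w_2$ really were at least $q^{\ell-1}$, then for a non-self-overlapping $w_1$ your numerator $w_1 \L w_1 - w_1 \L w_2$ would be $0$ and the odds bound would collapse. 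The correct statement is the opposite one: a nonzero coordinate $C_i(w_1,w_2)$ forces $w_1(i+1,\ell) = w^* w_1(1,\ell-i-1)$, hence forces $w_2$ to have period $i+1$, so $w_1 \L w_2$ is small precisely when $w_2$ has no small periods, via Corollary~\ref{cor:period}.

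Second, and more seriously, you never use Bob's freedom to choose $w^*$, yet the estimates you want are simply false for a bad choice, so no argument uniform in $w^*$ can work. Take $w_1 = HTHTH$ and $w^* = T$, so $w_2 = THTHT$. Then $w_2$ has period $2$, one computes $C(w_1,w_2) = (0,1,0,1,0)$ so that $w_1 \L w_2 = 10$ is comparable to $w_1 \L w_1 = 21$, and by the $H \leftrightarrow T$ symmetry the two words tie: the odds are exactly $1$, not $>1$ and nowhere near $q/(q-1)$. The missing ingredient is a lemma asserting that $w^*$ can be chosen so that $w_2$ has no period below roughly $\ell/2$ (this is what \cite{F} pins down for words, and what Lemma~\ref{lem:bob-pick} does for patterns). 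With that choice, Corollary~\ref{cor:period} gives $w_1 \L w_2 = O(q^{\ell/2})$ and $w_2 \L w_2 \leq q^{\ell-1} + O(q^{\ell/2})$, while $w_2 \L w_1 \geq q^{\ell-2}$ is forced, so the odds are at least $\bigl(q^{\ell-1} - O(q^{\ell/2})\bigr)/\bigl(q^{\ell-2}(q-1) + O(q^{\ell/2})\bigr) \to q/(q-1)$. (The paper's pattern proof additionally exploits the pairing $C_i(w_2,w_2) \leq C_{i+1}(w_2,w_1)$, which telescopes the denominator; you need either that identity or the no-small-period property to control a highly self-overlapping $w_2$.) Finally, the theorem also asserts this is Bob's \emph{best} strategy; your proposal never addresses optimality, which requires an upper bound on Bob's odds for every $w_2$ with $w_2(2,\ell) \neq w_1(1,\ell-1)$.
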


The proof of this theorem relies heavily on Corollary \ref{cor:period} on periods. The exact choice of letter to pick for $w_2(1)$ is determined in \cite{F}.

\begin{example}
If Alice picks the word $HTHTH$, Bob's best strategy is to pick the word $HHTHT$. This gives him a $7:2$ odds of winning.
\end{example}

\section{Patterns in words and group action}\label{sec:piwga}

\subsection{A motivating example}

In the classical game, a word is generated by a sequence of letters. A natural extension of a word is a \emph{pattern}, where we identify a group of similar words with a single string of characters.

Explicitly, for the case $q=2$ we may identify a word composed of $H$'s and $T$'s with its \emph{conjugate}, or the result of replacing $H$'s with $T$'s and vice versa. Alice can choose a pattern for her word. For example, she can decide that all three characters are the same, effectively choosing two words $HHH$ and $TTT$. Bob can choose a pattern where the characters alternate, essentially picking $HTH$ and $THT$. That means if the game proceeds $HH\underline{HTH}$, then Bob wins. 

We represent the fact that Alice wants all characters the same as a pattern $aaa$, where $a$ could be either $H$ or $T$. In other words, to identify both a word and its conjugate collectively, we take the word beginning with $H$, replace all $H$'s with lowercase $a$'s and $T$'s with lowercase $b$'s.

\begin{example}
The pattern $aaa$ represents the two words consisting of the same letter: $HHH$ and $TTT$. The pattern $aba$ represents two words with alternating letters: $HTH$ and $THT$.
\end{example}

We can play Penney's game with patterns. Suppose Alice picks pattern $aaa$, and Bob picks pattern $aba$. Then they flip a coin. If three of the same flips in a row appear first, Alice wins. If three alternating flips in a row appear first, then Bob wins.

\subsection{Group action}

We can also extend patterns in an alphabet with two letters, $H$ and $T$, to patterns in larger alphabets. Let $\mathcal{A}$ be an alphabet of size $q$. We assume that the letters in the alphabet are: $A$, $B$, $C$, and so on.

Consider a subgroup $G \subseteq S_q$, where $S_q$ is a permutation group on $q$ elements. Group $G$ acts on the alphabet; formally, we consider the corresponding group action $\varphi : G \times \mathcal{A} \to \mathcal{A}$. For shorthand, we denote $\varphi(g,x) = g \cdot x$. Elements of the group send letters to letters, and thus words to words. The order of the group $G$ is denoted as $|G|$.

We use $G\cdot w$ to denote the orbit of the word $w$ under the action of group $G$. Two words $v$ and $w$ are \textbf{equivalent} if they belong to the same orbit of the group, notated $v \sim w$. These orbits split the set of words into equivalence classes.

\begin{example}
Consider the group action $S_3$ on three letters $\{A,B,C\}$ which permutes the three letters. Then $S_3 \cdot ABC = \{ABC, ACB, BAC, BCA, CAB, CBA\}$, while $S_3 \cdot AAA = \{AAA,BBB, CCC\}$.
\end{example}

 Within each orbit $G \cdot w$, we select a canonical representative. By convention, we choose it to be the lexicographically earliest word in this class. We denote the canonical representative as $s(G \cdot w)$. Thus, we have $G\cdot s(G \cdot w) = G \cdot w$. We call $s(G \cdot w)$ a \textbf{pattern}. To distinguish between words and patterns, we use lowercase letters for patterns. The canonical representative on a pattern is a word, so we can use the same operation on patterns as on words. For example, when we take the last letter of a pattern, we assume that we take the last letter of its canonical representative.

\begin{example}
We have $s(S_3 \cdot BCB) = aba$.
\end{example}

\begin{example}
Under the group action $G = S_3$ which permutes all letters, the 3-letter patterns are divided into 5 orbits with the corresponding canonical representatives: $aaa$, $aab$, $aba$, $abb$, $abc$.
\end{example}

\subsection{Other groups}

In addition to the symmetric group $S_q$, we consider a cyclic subgroup $\mathbb{Z}_q$, which cycles all the letters of the alphabet.

\begin{example}
Under $G= \mathbb{Z}_3$, the orbit of $BCA$ is $\{ABC, BCA, CAB\}$ with the canonical representative as $abc$. Note that $BCA \not \sim BAC$.
\end{example}

\begin{example}
Under $G= \mathbb{Z}_3$, the 9 three-letter patterns of length $3$ are $aaa$, $aab$, $aac$, $aba$, $abb$, $abc$, $aca$, $acb$, $acc$.
\end{example}

Here are some other interesting examples of groups. 

\begin{itemize}
\item We can shuffle vowels and consonants separately. In this case, our group is a product of two symmetric groups.
\item Our alphabet might be a deck of cards. We can permute cards while keeping the color, suit, or value in place. In this case, our group is a product of several symmetric groups.
\item We can allow permuting only the vowels while keeping the consonants in place. Our group is a symmetric group that is a subgroup of $S_q$.
\item We can choose the alternating group as a subgroup.
\item We can choose one non-trivial element $g$ in our group that reverses the order of the alphabet: namely, we have $g \cdot a(i) = a(q+1-i)$. In this case our group is $G= \mathbb{Z}_2$.
\end{itemize}

\section{Group action and correlation}\label{sec:GroupActionAndCorrelation}

Our first proposition shows that the correlations on words are invariant with respect to the group action $\varphi$.

\begin{proposition}
\label{prop:invar}
For any two words $v$, $w$ and $g \in G$, we have $C(v, w) = C(g\cdot v, g \cdot w)$.
\end{proposition}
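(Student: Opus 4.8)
The plan is to argue directly from the definition of the correlation vector, showing that each coordinate $C_i$ is preserved coordinate-by-coordinate. Recall that $C(v,w) = (C_0, \dots, C_{\ell-1})$ where $\ell$ is the length of $v$, and $C_i = 1$ precisely when the suffix of $v$ of length $\ell - i$ equals the prefix of $w$ of length $\ell - i$, i.e.\ when $v(i+1, \ell) = w(1, \ell - i)$. I would fix $g \in G$ and a coordinate index $i$, and show that the equality defining $C_i$ for the pair $(v, w)$ holds if and only if the corresponding equality for $(g \cdot v, g \cdot w)$ holds.

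The key observation is that the group action is applied letterwise: for any word $u$, the word $g \cdot u$ is obtained by replacing each letter $u(j)$ with $g \cdot u(j)$, so $(g \cdot u)(j) = g \cdot u(j)$ for every position $j$. In particular, the action commutes with taking substrings, prefixes, and suffixes: $(g \cdot v)(i+1, \ell) = g \cdot (v(i+1,\ell))$ and $(g \cdot w)(1, \ell-i) = g \cdot (w(1, \ell-i))$. Because each element $g$ acts as a permutation of $\mathcal{A}$, it is a bijection on letters, hence the induced map on words is injective. Therefore, for two words $x$ and $y$ of equal length, we have $x = y$ if and only if $g \cdot x = g \cdot y$. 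Applying this with $x = v(i+1,\ell)$ and $y = w(1,\ell-i)$ shows that
\[
v(i+1,\ell) = w(1,\ell-i) \iff (g\cdot v)(i+1,\ell) = (g\cdot w)(1,\ell-i),
\]
which is exactly the statement that the $i$-th coordinate of $C(v,w)$ equals the $i$-th coordinate of $C(g\cdot v, g\cdot w)$.

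Since $v$ and $g \cdot v$ have the same length $\ell$, the two correlation vectors have the same number of coordinates, and the equivalence above establishes equality in every coordinate, giving $C(v,w) = C(g\cdot v, g\cdot w)$. I do not anticipate a genuine obstacle here: the proposition is essentially a bookkeeping statement that the defining conditions of the correlation are invariant under a letterwise bijection. The only point requiring a moment's care is making explicit that $g$ acting as a permutation is injective on letters and hence on words of a fixed length, so that an equality of images can be pulled back to an equality of preimages; this is what upgrades the trivial ``only if'' direction into a genuine ``if and only if.''
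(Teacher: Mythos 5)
Your proof is correct and takes essentially the same approach as the paper: both argue coordinate-by-coordinate, using the fact that $g$ is a bijection on letters, and hence on words, so that the suffix--prefix equality defining each entry $C_i$ holds for $(v,w)$ if and only if it holds for $(g\cdot v, g\cdot w)$. If anything, your write-up is more explicit than the paper's about why bijectivity upgrades the forward implication to an equivalence.
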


\begin{proof}
Consider the $i$-th bit of $C(v,w)$, where the length of $v$ is $\ell$. It is equal to 1 if and only if $v(\ell+1-i,\ell) = w(1,i)$. Because $g$ is bijective on letters, it also must be bijective on words, so $(G\cdot v)(\ell+1-i,\ell) = (G \cdot w)(1,i)$. Thus, all bits are equal, and the two correlations are identical.
\end{proof}

Take any orbit of equivalent words $V = G \cdot v$. The sum of the correlations of the word $w$ with all words in orbit $V$ is the same for all words in the orbit of $w$.

\begin{corollary}
\label{cor:equiv}
For any two equivalent words $w_1 \sim w_2$, we have
$$\sum_{v_i \in V} C(w_1, v) = \sum_{v_i \in V} C(w_2, v_i) \quad \text{and} \quad \sum_{v_i \in V} C(v_i, w_1) = \sum_{v_i \in V} C(v_i, w_2).$$
\end{corollary}

\begin{proof}
By definition, there is some $g \in G$ such that $g \cdot w_1 = w_2$. As $V$ is a complete orbit, we have that $g \cdot V = V$. In other words,
$$\sum_{v_i \in V} C(w_1, v_i) = \sum_{v_i \in V} C(g \cdot w_1, g \cdot v_i) = \sum_{v_i \in V} C(w_2, v_i),$$
so the two sums are equal as desired. The second claim follows similarly.
\end{proof}

\subsection{The weight of a substring in a word}

The \textbf{stabilizer} $G_w$ of a word $w$ is the set $\{g \in G \mid g \cdot w = w\}$. Note that two words in the same orbit have the same stabilizers. Thus, the stabilizer of a pattern $p$ is well-defined; denote this stabilizer as $G_p$.
\begin{definition}
Given a word $w$ and its substring $w(i,j)$, the 
\textbf{weight of the substring in the word $w$} is the number of words $v$ in the orbit $G\cdot w$ such that $v(i,j) = w(i,j)$.
\end{definition}

\begin{example}
Consider word $BBC$ in an alphabet with 3 letters and group $S_3$, then the corresponding orbit is $\{AAB, AAC, BBA, BBC, CCA, CCB\}$. Hence, the weight of the suffix $C$ is 2 as there are two words $BBC$ and $AAC$ in the orbit that have suffix $C$.
\end{example}

The number of words in the orbit of a word $w$ is $|G/G_w|$. If $x$ is a substring in $w$, then $G_w$ is a subgroup of $G_x$. Thus, the number of words in the orbit of $w$ that have $x$ as a substring in the same place is $|G/G_x|$. Therefore, the definition of weight is equivalent to the following: The weight of the substring $x$ in the word $w$ equals:
\[\frac{|G_x|}{|G_w|}.\]

\begin{example}
If the group is $S_q$, then the stabilizer of a word $w$ is a subgroup $S_j$, where $j$ is the number of letters in the alphabet not used in $w$. Thus, the weight of substring $x$ in a word is $k!/j!$, where $k$ is the number of letters not used in $x$.
\end{example}

\begin{example}
If the group is $\mathbb{Z}_q$, then the stabilizer of a word $w$ is the identity. Thus, the weight of substring $x$ in a word is $1$.
\end{example}

\subsection{Correlation polynomials and Conway leading number for patterns}

Now we define the correlation polynomial between two patterns $p$ and $p'$ that do not have to be the same length. Let $\ell$ be the length of $p$.

\begin{definition}
The \textbf{correlation vector} between two patterns $p$ and $p'$ is denoted $\mathcal{C}(p,p')=(\mathcal{C}_{0},\dots ,\mathcal{C}_{\ell-1})$, where $\mathcal{C}_{i}$ is defined as follows.
\begin{itemize}
    \item If the suffix $x$ of $p$ of length $\ell-i$ is equivalent to the prefix of $p'$ of length $\ell-i$, then $\mathcal{C}_i$ is the weight of the suffix $x$ in the word $p$,
    \item If they are not equivalent, then $\mathcal{C}_i = 0$.
\end{itemize}
\end{definition}

This definition shares a few similarities with the correlation vector on words. The first entry $\mathcal{C}_0(p,p')$ is $1$ if $p \sim p'$ and is $0$ otherwise. Moreover, all entries of $\mathcal{C}(p,p')$ are integers.

\begin{definition}
The \textbf{correlation polynomial} of $p$ and $p'$ is defined as $\mathcal{C}_{p,p'}(z)=\mathcal{C}_{0}z^{0}+\dots +\mathcal{C}_{\ell-1}z^{\ell-1}$. It is a polynomial of degree at most $\ell-1$.
\end{definition}

Similar to before, the Conway leading number between two patterns is defined as the value of the correlation vector interpreted as a string in base $q$ and it is denoted as $p\mathcal{L} p'$.

\begin{definition}
The \textbf{Conway leading number (CLN)} for two patterns $p$ and $p'$ is 
\[p\mathcal{L} p' = q^{\ell-1} \mathcal{C}_{p,p'}\left(\frac{1}{q}\right).\]
\end{definition}

We provide another definition of correlation with the following proposition.

\begin{proposition}\label{prop:corr}
Pick any word $v$ belonging to the orbit represented by the pattern $p$, and $w$ belonging to the orbit represented by $p'$. Then
\[\mathcal{C}(p,p') = \sum_{v_i \in G\cdot v} C(v_i, w).\]
\end{proposition}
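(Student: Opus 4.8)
The plan is to prove the identity coordinate by coordinate. Writing $\ell$ for the common length of $p$ and $v$, I will fix an index $i$ with $0 \le i \le \ell-1$ and show that the $i$-th entry of $\mathcal{C}(p,p')$ equals the $i$-th entry of $\sum_{u \in G\cdot v} C(u,w)$. Set $x = p(i+1,\ell)$, the suffix of $p$ of length $\ell-i$, and let $y = w(1,\ell-i)$ be the prefix of $w$ of the same length. By the definition of correlation on words, the $i$-th coordinate on the right counts exactly those words $u$ in the orbit $G\cdot v = G\cdot p$ whose suffix of length $\ell-i$ equals $y$.

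The first key observation is that, because group elements act letterwise and preserve positions, every relevant sub-word lies in a predictable orbit. If $u = h\cdot p$ for some $h \in G$, then its suffix of length $\ell-i$ is $h\cdot x$, which is equivalent to $x$; similarly, writing $w = g\cdot p'$, the prefix $y = w(1,\ell-i)$ is equivalent to $p'(1,\ell-i)$. This immediately settles the case where $x$ is \emph{not} equivalent to $p'(1,\ell-i)$: then no $u$ in the orbit can have suffix equal to $y$, since such a coincidence would force $x \sim y \sim p'(1,\ell-i)$, so the right-hand count is $0$, matching $\mathcal{C}_i = 0$.

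In the remaining case $x \sim p'(1,\ell-i)$, so by definition $\mathcal{C}_i$ is the weight of $x$ in $p$, i.e. the number of $u \in G\cdot p$ whose suffix of length $\ell-i$ equals $x$ exactly. The subtlety is that the right-hand sum instead counts words whose suffix equals the specific representative $y$, not $x$. Since $y \sim p'(1,\ell-i) \sim x$, I choose $g_0 \in G$ with $g_0\cdot x = y$; then $u \mapsto g_0\cdot u$ is a bijection of the orbit $G\cdot v$ onto itself that carries the set of words with suffix $x$ onto the set of words with suffix $y$. Hence the two counts agree, and the right-hand coordinate equals the weight of $x$, which is $\mathcal{C}_i$.

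I expect the main obstacle to be precisely this reconciliation in the equivalent case: the weight appearing in $\mathcal{C}(p,p')$ is intrinsic to the pattern, counting suffixes equal to the canonical $x$, whereas the sum over the orbit counts against the arbitrarily chosen representative $w$. The bijection argument is what bridges the two, and it also shows transparently that the result is independent of the choice of $w$ within its orbit, consistent with Corollary~\ref{cor:equiv}; independence from the choice of $v$ is automatic, since $G\cdot v$ is the same set for every representative. A minor point to check carefully is that the equivalences among sub-words of length $\ell-i$ are taken with respect to the restricted action of $G$, so that translating $\sim$ between full words and their prefixes or suffixes is legitimate.
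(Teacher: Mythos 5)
Your proof is correct, and it differs from the paper's in the mechanism used for the key case. The paper first rewrites the orbit sum as $\frac{1}{|G_v|}\sum_{g \in G} C(g\cdot v, w)$ and then, for each coordinate $j$, counts the group elements $g$ with $g \cdot v(j+1,\ell) = w(1,\ell-j)$: this set is either empty or a coset of the stabilizer of $v(j+1,\ell)$, so the count is $|G_{v(j+1,\ell)}|$, and the weight is recovered via the stabilizer-ratio formula $|G_x|/|G_p|$ established earlier in the paper. You instead stay at the level of the orbit as a set: you observe that the right-hand coordinate counts words $u \in G\cdot p$ whose suffix equals the prefix $y$ of $w$, while $\mathcal{C}_i$ counts those whose suffix equals $x = p(i+1,\ell)$, and you bridge the two exact-equality conditions with the translation bijection $u \mapsto g_0 \cdot u$, where $g_0 \cdot x = y$. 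Your version is more elementary — it needs neither the orbit--stabilizer theorem nor the stabilizer formulation of weight — and it makes explicit a subtlety the paper glosses over, namely that ``suffix equal to $w$'s prefix'' and ``suffix equal to $p$'s own suffix'' are different conditions that merely have the same count; it also shows transparently that the result is independent of the choice of $w$ in its orbit. What the paper's computation buys in exchange is the explicit value $|G_{p(j+1,\ell)}|/|G_p|$ of each nonzero entry, which is the form used later when discussing the normalized correlation $\mathcal{C}^*$.
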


\begin{proof}
First notice that 
\[\sum_{v_i \in G\cdot v} C(v_i, w) = \frac{1}{|G_v|}\sum_{g \in G} C(g \cdot v, w).\]
Focus on a single $j$-th entry of the formula above. This entry is equal to the number of elements $g \in G$ for which the suffix ${g\cdot v}(j+1,\ell)$ of length $\ell-j$ matches the prefix $w(1,\ell-j)$. If these two words are not equivalent, then the count is $0$. If the two words are equivalent, the count is equal to the order of the stabilizer of $v(j+1,\ell)$. Thus, the $j$-th entry of the equation above equals $\mathcal{C}_j$ in the definition of the correlation as desired.
\end{proof}

The proposition above shows another way to define the correlation between patterns. Note that we cannot swap $v$ and $w$ in this definition. This is due to the following formula
\[|G_v|\sum_{v_i \in G\cdot v} C(v_i, w) = |G_w|\sum_{w_i \in G\cdot w} C(v, w_i),\]
which is true since after multiplying each vector sum by the constant preceding it, the $i$-th entry are either $0$ or are both the order of the stabilizer of $w(1,\ell-i)$.

\begin{example}
Consider the autocorrelation vector $\mathcal{C}(abc, abc)$ with respect to the group $S_3$. The weight of the suffix $abc$ in $abc$ is 1. The weight of the suffix $bc$ in $abc$ is 1, the weight of the suffix $c$ in $abc$ is 2. Thus the correlation vector is $(1,1,2)$. Another calculation can be done using Proposition~\ref{prop:corr}. The autocorrelation is
\begin{multline*}
C(ABC,ABC) + C(ACB,ABC) + C(BAC,ABC) + \\
C(BCA,ABC) + C(CAB,ABC) + C(CBA,ABC)
\end{multline*}
which evaluates to $(1,0,0) + (0,0,0) + (0,0,0) + (0,0,1) + (0,1,0) + (0,0,1) = (1,1,2)$. The corresponding correlation polynomial is thus $\mathcal{C}_{abc,abc}(z) = 1+z+2z^2$.
\end{example}

\subsection{Correlation vector}

Generalizing the definition of period for words, we say that a pattern $p = p(1)p(2) \dots p(\ell)$ has period $i$ if the $i$-th entry $\mathcal{C}_i$ of the vector $\mathcal{C}(p,p)$ is nonzero, i.e. if
\[p(1)p(2)\dots p(\ell-i) \sim p(i+1)p(i+2) \dots p(\ell),\]
or, equivalently, there exists $g \in G$ such that $g \cdot p(1,\ell-i) = p(i+1,\ell)$.

Note that if $p$ has period $s$ when the equivalence is realized by the group element $g$, then because $g \cdot (g \cdot p_i) = p_{i+2s}$ for $i+2s \le \ell$, we see that $p$ also has period $2s$. In this fashion, we get the following proposition in the fashion of Section~\ref{sec:originalproblem}.

\begin{proposition}
If a pattern has period $s$, it also has period $ks$ for all $k \geq 1$. More generally, if a pattern has period $s$ and period $t$, it also has period $s+t$.
\end{proposition}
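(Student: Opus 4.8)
The plan is to translate the statement about patterns entirely into the letter-wise action of group elements, after which both claims reduce to composing witnessing permutations and checking index ranges.

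First I would unpack the definition. By definition, $p$ has period $s$ precisely when there is some $g \in G$ with $g \cdot p(1,\ell-s) = p(s+1,\ell)$ as words. Since $g$ sends a word to a word by acting on each letter separately, this single word-equation is equivalent to the family of scalar equations $g \cdot p(i) = p(i+s)$ holding for every $1 \le i \le \ell - s$. I would record this reformulation as the working characterization of period and use it throughout.

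For the first claim, suppose $g$ realizes period $s$. I would show by induction on $k$ that $g^k$ realizes period $ks$, i.e. $g^k \cdot p(i) = p(i+ks)$ for all $1 \le i \le \ell - ks$. The base case $k=1$ is the reformulation above. For the inductive step, apply $g$ to both sides of $g^k \cdot p(i) = p(i+ks)$ and invoke $g \cdot p(i+ks) = p(i+(k+1)s)$; the latter equation is licensed exactly when $i + ks \le \ell - s$, that is $i \le \ell - (k+1)s$, which is precisely the range claimed at stage $k+1$. Thus $g^k$ is a group element witnessing period $ks$. For the more general claim, suppose $g$ realizes period $s$ and $h$ realizes period $t$. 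I would verify that the composite $hg$ (apply $g$ first, then $h$) realizes period $s+t$: for $1 \le i \le \ell - s - t$ we have $i \le \ell - s$, so $g \cdot p(i) = p(i+s)$, and since $i + s \le \ell - t$ we also have $h \cdot p(i+s) = p(i+s+t)$. Composing gives $(hg) \cdot p(i) = p(i+s+t)$ across the full range $1 \le i \le \ell - (s+t)$, which is exactly period $s+t$. Note that the first claim is then also recovered by taking $h = g$ and $t = s$ and iterating.

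The only thing requiring care — and the closest thing to an obstacle — is the bookkeeping of index ranges: each defining equation $g \cdot p(i) = p(i+s)$ is valid only on a prefix of indices, so after each application and shift one must confirm that the relevant index still lies in the allowed prefix. There is no deeper difficulty here, since $G$ acts letterwise and the periods combine simply by composing the permutations that witness them.
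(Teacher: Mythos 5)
Your proof is correct and follows the same route the paper takes: the paper justifies this proposition (stated without a formal proof) by observing that composing the witnessing group element with itself gives $g \cdot (g \cdot p(i)) = p(i+2s)$ for $i + 2s \le \ell$, "and in this fashion" the general claim follows. You have simply fleshed out that sketch, composing the witnesses $g$ and $h$ into $hg$ and doing the index-range bookkeeping explicitly, which is exactly the intended argument.
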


Like in Section~\ref{sec:originalproblem}, the reverse implication is not true; if a pattern has periods $s < t$, it does not necessarily have period $t-s$.

\begin{example}
The pattern $p = abcdaec$  with symmetric group $G=S_3$ has $\mathcal{C}(p,p) = (1,0,1,1,2,6,24)$. In particular, it has periods $2$, $3$, $4$, $5$, and $6$, but not period $1$.
\end{example}

The reason that the above example does not satisfy the reverse implication is that the elements
\[g_2 = \begin{pmatrix}
a & b & c & d & e \\
c & d & a & e & b
\end{pmatrix}, \quad g_3 = \begin{pmatrix}
a & b & c & d & e \\
d & a & e & c & b
\end{pmatrix}\]
are the unique elements in $G$ for which $g_2 \cdot p(1,5) = p(3,7)$ and $g_3 \cdot p(1,4) = p(4,7)$. But they do not commute: $g_2g_3 \cdot p(3) = b \neq d = g_3g_2 \cdot p(3)$, meaning that $p$ cannot be extended to an eighth letter while having periods $2$ and $3$. If $g_2$ and $g_3$ commuted on every letter, then we would also have that $g_2^{-1}$ and $g_3$ commute, meaning $g_2^{-1}g_3 \cdot p(i) = p(i+1)$ for $i \leq \ell - 3$, and $g_3g_2^{-1} \cdot p(i) =  p(i+1)$ for $i \geq 3$. This would imply that $p$ has period $1$ for $\ell \geq 6$.

It thus makes sense that the implication is true for sufficiently long patterns since a large number of letters forces the respective group elements for each period to commute. In fact, we have the following lemma.

\begin{lemma}
\label{lem:subtract-per}
Let $p$ be a pattern with length $\ell \geq (q+1)s + t$ with periods $s < t$. Then $t-s$ is also a period of $p$.
\end{lemma}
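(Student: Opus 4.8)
The plan is to produce an explicit group element realizing the period $t-s$. Fix $g_s, g_t \in G$ with $g_s \cdot p(k) = p(k+s)$ for $1 \le k \le \ell - s$ and $g_t \cdot p(k) = p(k+t)$ for $1 \le k \le \ell - t$ (these exist because $p$ has periods $s$ and $t$). The natural candidate for period $t-s$ is $h := g_s^{-1} g_t$. A direct computation gives $h \cdot p(k) = g_s^{-1}\cdot p(k+t) = p(k+t-s)$ for every $k$ with $1 \le k \le \ell - t$, so the period-$(t-s)$ relation already holds on the prefix $p(1, \ell - t)$. The real content is to push this relation out to the full range $1 \le k \le \ell - (t-s)$, that is, to the remaining positions $\ell - t + 1 \le k \le \ell - t + s$.

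The key reduction is that it suffices to show $g_s$ and $g_t$ commute on every letter occurring in $p$. Indeed, for $k$ in the leftover range we have $k \ge s+1$, so $p(k) = g_s \cdot p(k-s)$ and hence $h \cdot p(k) = g_s^{-1} g_t g_s \cdot p(k-s)$; if $g_t g_s = g_s g_t$ on the letter $p(k-s)$, this collapses to $g_t \cdot p(k-s) = p(k-s+t) = p(k+t-s)$, since now $k - s \le \ell - t$ lands in the good range. So everything rests on the commutation claim.

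Commutation is easy on a prefix: for $1 \le k \le \ell - s - t$ both $g_t g_s \cdot p(k)$ and $g_s g_t \cdot p(k)$ evaluate, via the two period relations applied in either order, to $p(k+s+t)$. Thus $g_s g_t$ and $g_t g_s$ agree on every letter appearing among $p(1), \dots, p(\ell - s - t)$. The main obstacle---and the only place the hypothesis $\ell \ge (q+1)s + t$ is used---is to upgrade ``agree on the prefix'' to ``agree on every letter of $p$.'' For this I would exploit the period-$s$ structure: every position $m$ can be written $m = r + js$ with $1 \le r \le s$, so $p(m) = g_s^{j} \cdot p(r)$ lies in the $g_s$-cycle of $p(r)$. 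Since $g_s$ is a permutation of at most $q$ letters, each such cycle has length at most $q$, so all of its letters already occur among $p(r), p(r+s), \dots, p(r + (q-1)s)$, that is, at positions at most $qs$. Because $\ell \ge (q+1)s + t$ gives $qs \le \ell - s - t$, every letter of $p$ therefore appears inside the prefix $p(1, \ell - s - t)$, and the prefix-commutation transfers to all of $p$.

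Assembling these, commutation of $g_s$ and $g_t$ on all letters of $p$ feeds the reduction, which extends $h \cdot p(k) = p(k+t-s)$ to all $1 \le k \le \ell - (t-s)$, exhibiting $t-s$ as a period. I expect the bookkeeping of index ranges---making sure each invocation of a period relation stays within $[1,\ell]$---to be the fiddliest part, while the genuinely essential idea is the pigeonhole on $g_s$-cycles that forces every letter to surface early enough for the length bound to bite.
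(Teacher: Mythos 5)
Your proposal is correct and follows essentially the same route as the paper: the paper also takes $g_s^{-1}g_t$ as the witness, verifies the period relation directly on $p(1,\ell-t)$ and extends it to the last $s$ positions via commutation of $g_s$ and $g_t$ on the letters of $p$, with your pigeonhole on $g_s$-cycles being exactly the paper's Lemma~\ref{lem:all-letters}. The only cosmetic difference is that you apply commutation in the conjugated form $g_s^{-1}g_t g_s \cdot p(k-s) = g_t \cdot p(k-s)$, which neatly sidesteps the paper's unremarked step that commutation of $g_s, g_t$ on the letters of $p$ transfers to $g_s^{-1}, g_t$.
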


To prove this, we need a quick lemma.

\begin{lemma}
\label{lem:all-letters}
Let $p$ be a pattern with length $\ell \geq qs$ and period $s$. Then each of the letters of $p$ appear somewhere in the first $qs$ letters of $p$.
\end{lemma}

\begin{proof}
By definition, there is an element $g \in G$ for which $g \cdot p(1,\ell-s) = p(s+1,\ell)$. Suppose the letter $x$ appears in $p$ and has index $k$: $x = p(k)$. Then the letter $y = p(k \mod s)$ must be in the same orbit as $x$ under $g$.
Consider the sequence
\[y, \quad g \cdot y, \quad g^2 \cdot y, \quad \dots, \quad g^{q-1} \cdot y.\]
Because the orbit has at most $q$ letters, this sequence contains the entire orbit of $y$. In particular, it must contain $x$. So there is some integer $i$ for which $x = g^i \cdot y$, which has index less than $qs$.
\end{proof}

We now prove Lemma~\ref{lem:subtract-per}.

\begin{proof}[Proof of Lemma~\ref{lem:subtract-per}]
Because $p$ has period $s$, there is an element $g_s \in G$ for which $g_s \cdot p(1,\ell-s) = p(s+1,\ell)$. Similarly, because $p$ has period $t$, there is an element $g_t \in G$ for which $g_t \cdot p(1,\ell-t) = p(t+1,\ell)$.

Thus, for all $1\leq i \leq qs$, we have $g_sg_t \cdot p(i) = p(i+s+t) = g_tg_s \cdot p(i)$. By Lemma~\ref{lem:all-letters}, the letter $p(i)$ ranges over all letters in $p$ as we vary $i$ in this range. So $g_s$ and $g_t$ commute for all letters in $p$. This implies $g^{-1}_s$ and $g_t$ also commute for all letters in $p$. In particular, we have
\begin{align*}
    g^{-1}_s g_t \cdot p(1,\ell+s-t) &= [g^{-1}_s g_t \cdot p(1,\ell-t)][g^{-1}_s g_t \cdot p(\ell-t+1,\ell-t+s)] \\
    &= [g^{-1}_s g_t \cdot p(1,\ell-t)][g_t g^{-1}_s \cdot p(\ell-t+1,\ell-t+s)] \\
    &= [p(-s+t+1,\ell-s)][p(\ell-s+1,\ell)] \\
    &= p(t-s+1,\ell),
\end{align*}
so $p$ has period $t-s$.
\end{proof}

In particular, from Lemma~\ref{lem:subtract-per} we have the following corollary.

\begin{corollary}
If $p$ has length $\ell$, \emph{least} period $s$ and period $t$ not divisible by $s$, then $t \geq \ell/(q+2) + 1$.
\end{corollary}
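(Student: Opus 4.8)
The plan is to imitate the proof of the word-analog Corollary~\ref{cor:period}, replacing the word subtraction step by Lemma~\ref{lem:subtract-per} and running a subtractive Euclidean algorithm on the two periods. First I observe that since $s$ is the least period and $s \nmid t$, we must have $s < t$: every period is at least $s$, so $t \geq s$, and $t = s$ would make $t$ divisible by $s$. I will argue by contradiction, assuming that $(q+1)s + t \le \ell$, and show that this forces $p$ to have a period strictly smaller than $s$, contradicting minimality.

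To produce this smaller period, I maintain a pair of known periods $(a,b)$ with $a \le b$, starting from $(s,t)$, and repeatedly apply Lemma~\ref{lem:subtract-per} to replace $b$ by $b-a$ (reordering afterward), exactly as in the subtractive Euclidean algorithm. The key bookkeeping step is to verify that the length hypothesis of the lemma, namely $\ell \ge (q+1)a + b$ where $a$ is the smaller of the current pair, holds at every stage. For this I note that each subtraction sends $(a,b)$ to the sorted pair $\{a, b-a\}$; since $1 \le a \le b$, both the minimum and the maximum of the running pair are non-increasing throughout the algorithm. Hence at every step the current minimum is at most $s$ and the current maximum is at most $t$, so $(q+1)a + b \le (q+1)s + t \le \ell$ and Lemma~\ref{lem:subtract-per} applies. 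The algorithm terminates when the two entries coincide at $\gcd(s,t)$, which is therefore a period of $p$.

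I then close the argument: because $s \nmid t$, the divisor $\gcd(s,t)$ of $s$ is strictly smaller than $s$, contradicting that $s$ is the least period. This contradiction shows $(q+1)s + t > \ell$. Finally I convert this into the stated bound using integrality together with $s \le t-1$: the strict inequality gives $\ell \le (q+1)s + t - 1 \le (q+1)(t-1) + (t-1) = (q+2)(t-1)$, so $t \ge \ell/(q+2) + 1$.

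I expect the main obstacle to be the monotonicity bookkeeping in the second paragraph, namely verifying that the single hypothesis $(q+1)s + t \le \ell$ is strong enough to license every subtraction in the Euclidean descent, rather than requiring a fresh and possibly larger length bound at each step. Once one sees that both coordinates of the running pair only decrease, the worst case is the initial pair and the entire descent goes through; the remaining steps (reaching $\gcd(s,t)$ as a period and the final integer arithmetic) are routine.
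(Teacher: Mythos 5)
Your proof is correct, and it takes a genuinely different route through the middle of the argument than the paper does, even though both rest on Lemma~\ref{lem:subtract-per} and close with the identical arithmetic (from $\ell \leq (q+1)s + t - 1$ and $s \leq t-1$ deduce $t \geq \ell/(q+2)+1$). The paper passes, implicitly and without loss of generality, to the \emph{least} period $t$ not divisible by $s$; for that $t$, the difference $t-s$ cannot be a period (it would be a smaller period not divisible by $s$), so a single application of the contrapositive of Lemma~\ref{lem:subtract-per} gives $\ell \leq (q+1)s + t - 1$ at once. You instead keep $t$ arbitrary, assume $\ell \geq (q+1)s + t$ for contradiction, and run a subtractive Euclidean descent; your monotonicity observation --- that both entries of the running pair are non-increasing, so the single initial length hypothesis licenses every subtraction --- is exactly the bookkeeping needed, and the descent correctly terminates with $\gcd(s,t)$ as a period, which is less than $s$ (as $s \nmid t$) and contradicts minimality of $s$. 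Your version costs more work (many applications of the lemma rather than one, plus the termination and gcd-preservation facts about the Euclidean algorithm), but it buys two things: it avoids the paper's silent reduction to the minimal non-multiple period, and it establishes along the way the stronger Fine--Wilf-style statement that a pattern of length at least $(q+1)s+t$ with periods $s$ and $t$ has period $\gcd(s,t)$. The paper's proof is shorter precisely because minimality of $t$ substitutes for the entire descent.
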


\begin{proof}
If $t$ is the least period not divisible by $s$, then $t-s$ cannot be a period. The contrapositive of Lemma~\ref{lem:subtract-per} implies that $\ell \leq (q+1)s + t - 1$. Since $s+1 \leq t$, we have $\ell \leq (q+2)t - q - 2$ or $t \geq \ell/(q+2) + 1$, as desired.
\end{proof}

\subsection{Non-self-overlapping patterns}

Recall that, for a word $w$ of length $\ell$, the vector $C(w,w) = (C_0, C_1, \dots, C_{\ell - 1})$ satisfies $C_0 = 1$ for $1 \leq i < \ell$, so all CLNs are at least $q^{\ell-1}$. This bound is exact. It is achieved when no proper suffix is equivalent to a prefix, that is for non-self-overlapping words.

We call a pattern \textbf{non-self-overlapping} if no proper prefix is equivalent to suffix. Such a pattern can only exist if the alphabet is not one orbit of group $G$, that is, if there are two letters that are not equivalent to each other. Indeed, the last letter and the first letter of a pattern should not be equivalent in a non-self-overlapping pattern.

Suppose letters $a$ and $b$ are not equivalent, then any non-self-overlapping word in these two letters provide a non-self-overlapping pattern.

\begin{example}
Suppose our group permutes vowels and consonants separately without mixing them. Then, a vowel is not equivalent to a consonant. Let us denote a vowel by $H$ and a consonant by $T$. Thus, any pattern in our alphabet can be mapped into a word in the two-letter alphabet. If the image of this mapping is a non-self-overlapping word, then the original pattern is non-self-overlapping.
\end{example}

We call a pattern \textbf{almost-non-self-overlapping} if no proper prefix is equivalent to a suffix, except for the prefix of length 1.

\begin{example}
Consider pattern $aaa\cdots aaab$. If $a$ and $b$ belong to different equivalent classes under the group action, then the pattern is non-self-overlapping. Otherwise, it is almost-non-self-overlapping.
\end{example}

\subsection{Lower bound for CLN}

The lower bound $q^{\ell - 1}$ for words is not always achievable for patterns. We strengthen the lower bound with the following claim.

\begin{theorem}
Consider the lowest CLN achieved by any pattern of length $\ell$.
\begin{itemize}
    \item[(i)] If an orbit of a single letter under action of group $G$ does not cover all the alphabet, then the lowest possible CLN is $q^{\ell-1}$.
    \item[(ii)] Otherwise, the lowest possible CLN is between $q^{\ell-1} + 1$ and $q^{\ell-1} + q - 1$ inclusive.
    \item[(iii)] The lowest bound $q^{\ell-1} + 1$ is achived for group $\mathbb{Z}_q$ and the lowest bound $q^{\ell-1} + q - 1$ is achieved for group $S_q$.
\end{itemize}
\end{theorem}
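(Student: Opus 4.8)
The theorem asks for the minimal Conway leading number $p\mathcal{L}p$ over all patterns $p$ of length $\ell$, split into three cases depending on the group action. Recall that $p\mathcal{L}p = q^{\ell-1}\mathcal{C}_{p,p}(1/q)$, and the autocorrelation vector is $(\mathcal{C}_0, \mathcal{C}_1, \dots, \mathcal{C}_{\ell-1})$ with $\mathcal{C}_0 = 1$. So $p\mathcal{L}p = q^{\ell-1} + q^{\ell-2}\mathcal{C}_1 + \dots + \mathcal{C}_{\ell-1}$, and minimizing the CLN amounts to making the higher entries $\mathcal{C}_1, \mathcal{C}_2, \dots$ as small as possible, with the most weight placed on $\mathcal{C}_1$ (the coefficient of $q^{\ell-2}$).

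**Plan for part (i).** The plan is to show that if some two letters $a, b$ are inequivalent under $G$, then a non-self-overlapping pattern exists, giving $\mathcal{C}_i = 0$ for all $i \geq 1$ and hence CLN exactly $q^{\ell-1}$. I would invoke the discussion in the "Non-self-overlapping patterns" subsection: since the orbit of a single letter does not cover the alphabet, there are inequivalent letters $a \not\sim b$, and any non-self-overlapping word in the two-letter sub-alphabet $\{a,b\}$ (which exists for every length, e.g. $a^{\ell-1}b$-type constructions) yields a non-self-overlapping pattern. Combined with the universal lower bound $\mathcal{C}_0 = 1$ forcing CLN $\geq q^{\ell-1}$, this pins the minimum at $q^{\ell-1}$.

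**Plan for parts (ii) and (iii).** Now suppose the whole alphabet is a single orbit, so every letter is equivalent to every other. Then no pattern can be non-self-overlapping: the first and last letters are always equivalent, forcing $\mathcal{C}_{\ell-1} \neq 0$, so the CLN strictly exceeds $q^{\ell-1}$, giving the lower endpoint $q^{\ell-1}+1$. For the upper endpoint, the key observation is that $\mathcal{C}_{\ell-1}$ is by definition the \emph{weight} of the length-1 suffix $p(\ell)$ in $p$, which equals $|G_{p(\ell)}|/|G_p|$; I would argue that one can always choose an almost-non-self-overlapping pattern (prefix $aaa\cdots ab$-style) making $\mathcal{C}_i = 0$ for $1 \leq i \leq \ell-2$ while controlling only the final entry $\mathcal{C}_{\ell-1}$, so the CLN becomes exactly $q^{\ell-1} + \mathcal{C}_{\ell-1}$. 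The weight of a single letter lies between $1$ and $q-1$: it is at least $1$ (the word itself counts), and I would bound it above by $q-1$ using that the alphabet is a transitive orbit, so the stabilizer of the whole pattern is nontrivial relative to the stabilizer of a single letter in a way that caps the number of orbit-mates sharing that suffix. For (iii), the two extremes are realized concretely: under $\mathbb{Z}_q$ all stabilizers are trivial (by the example computing weights under $\mathbb{Z}_q$ as $1$), forcing $\mathcal{C}_{\ell-1} = 1$ and CLN $= q^{\ell-1}+1$; under $S_q$ the weight computation $k!/j!$ from the earlier example yields the maximal single-letter weight $q-1$, giving CLN $= q^{\ell-1}+q-1$.

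**Main obstacle.** The hard part will be the upper-bound argument in (ii): I must simultaneously (a) guarantee an almost-non-self-overlapping pattern of every length $\ell$ exists even when the alphabet is one orbit, and (b) sharply bound the single-letter weight $\mathcal{C}_{\ell-1} = |G_{p(\ell)}|/|G_p|$ by $q-1$ in general and show it can be minimized down to $1$. Both depend delicately on the group structure, and the clean $q-1$ bound likely requires a transitivity/orbit-counting argument (the orbit of the single suffix letter has size at least $2$ since the alphabet is one orbit, so the number of orbit-mates of $p$ sharing a fixed last letter is at most $|G\cdot p|\cdot/|{\rm orbit\ of\ letter}| $-type counting), which I would need to make precise. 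I expect the extremal realizations in (iii) to be the cleanest part, following directly from the two weight examples already computed in the paper.
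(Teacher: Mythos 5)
Your proposal follows essentially the same route as the paper's proof: a non-self-overlapping word in a two-letter sub-alphabet for (i), the lower bound $\mathcal{C}_{\ell-1}\geq 1$ together with the almost-non-self-overlapping pattern $aa\cdots ab$ for (ii), and evaluation of that pattern's final weight under $\mathbb{Z}_q$ and $S_q$ for (iii). One caution on the obstacle you flagged: a single-letter suffix does \emph{not} have weight at most $q-1$ for general patterns (e.g.\ $abc$ under $S_4$ has last entry $3!=6$); the bound holds here only because $aa\cdots ab$ uses exactly two distinct letters, so the orbit-mates of $AA\cdots AB$ ending in $B$ are of the form $CC\cdots CB$ with $C\neq B$, giving at most $q-1$ of them --- which is precisely the paper's one-line count.
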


\begin{proof}
To begin, we know that $\mathcal{C}_0 = 1$, and so $p\mathcal{L}p \geq q^{\ell-1}$.

If an orbit of a single letter under the group action $\varphi$ of group $G$ does not cover all letters in the alphabet, then there are two letters $A$ and $B$ that belong to different orbits. As such, any non-self-overlapping word in the alphabet $\{A,B\}$ of length $\ell$ is in an orbit whose corresponding pattern has CLN equal to $q^{\ell - 1}$, proving (i).

If an orbit of a single letter under action of group $G$ does cover all the alphabet, then the last character of pattern $p$ is equivalent to the first character, or $\mathcal{C}_{\ell-1} \geq 1$ and therefore $p \mathcal{L} p \geq q^{\ell-1} + 1$.

Consider the pattern $p = aa\ldots ab$. The number of elements in the orbit of the word $s(p)$ that end in $B$ does not exceed $q-1$. It follows that $p \mathcal{L} p \leq q^{\ell-1} + q-1$. This pattern $p$ provides the exact values for the lower bound for groups $\mathbb{Z}_q$ and $S_q$.
\end{proof}

For words, the lower bound cannot be achieved if the first and last letters match. However, in the case of patterns, the first and last character \emph{can} match and still achieve the lower bound.

\begin{example}
For $(q,\ell) = (4,5)$ and the group $G =  S_4$, the lower CLN bound of $4^{5-1} + 4 - 1 = 259$ is achieved for the patterns $aaaab$, $aaaba$, $aabab$, $abaaa$, $ababb$, and $abbbb$.
\end{example}

Note that a word that achieves the lower bound for words may not correspond to a pattern that achieves the lower bound for patterns.

\begin{example}
For $q = 2$, the word $AABB$ has autocorrelation $(1,0,0,0)$, thus, achieving the lower bound for CLN for words. If we consider group $S_2$, then the pattern $aabb$ has autocorrelation $(1,0,q-1,q-1)$, which does not match the minimum CLN.
\end{example}

In general, there does not appear to be a simple rule to generate all patterns achieving the lower bound.

\section{Generating functions}\label{sec:gfs}

We now generalize an analogue of Theorem \ref{thm:system} to sets of patterns. A \textbf{reduced} set of patterns is a set where there are no patterns $p$ and $p'$, such that some substring of $p'$ defines a pattern equivalent to $p$. For instance, the set $\{aba,aabcb\}$ is not reduced, as $aba \sim bcb$. Note that if a set of patterns is reduced, the joint set of words formed by the union of their orbits is also reduced. The reverse direction does not hold: for instance, the set of words $\{ABA, AABCB\}$ is reduced while the corresponding set of patterns $\{aba, aabcb\}$ is not.

Fix a group $G$ to form our group action; let $\mathcal{S} = \{p_1, p_2, \dots, p_k\}$ denote a reduced set of $k$ patterns with lengths $\ell_1, \ell_2, \dots, \ell_k$, all composed of letters from an alphabet of size $q$. It is important that the set $\mathcal{S}$ is reduced for the same reasons as before.

Once more, define
\begin{itemize}
    \item $\mathcal{A}(n,\mathcal{S})$ to be the number of words (not patterns) of length $n$ not containing any subword represented by any pattern $p \in \mathcal{S}$; and
    \item $\mathcal{T}_{p_i}(n,\mathcal{S})$  to be the number of words (not patterns) of length $n$ not containing any subword represented by any pattern $p \in \mathcal{S}$, except for a single word represented by the pattern $p_i$ at the end of the word.
\end{itemize}
Then we set the generating functions
\[\mathcal{G}(z,\mathcal{S}) = \sum_{k=0}^\infty \mathcal{A}(k,\mathcal{S})z^k, \quad \mathcal{G}_{p_i}(z,\mathcal{S}) = \sum_{k=0}^\infty \mathcal{T}_{p_i}(k,\mathcal{S})z^k,\]
and sometimes drop $\mathcal{S}$
when it is clear which set $\mathcal{S}$ of patterns we are referring to. We verify a quick proposition about generating functions of equivalent words. Given a set of patterns $\mathcal{S}$, we denote by $S$ the set of words that corresponds to the union of the orbits of all the patterns.

\begin{proposition}
\label{invar-genfunc}
Take a set $S$ of words which is invariant under the group action $\varphi$ induced by the group $G$. Then for any equivalent words $v \sim w$ in $S$, we have $G_v(z,S) = G_w(z,S)$.
\end{proposition}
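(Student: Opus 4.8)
The plan is to exploit the $G$-invariance of $S$ so that the group element carrying $v$ to $w$ induces a length-preserving bijection between the first-occurrence strings counted by $T_v(n,S)$ and those counted by $T_w(n,S)$. Since $v \sim w$, I would fix $g \in G$ with $g \cdot v = w$. Applying $g$ letter by letter gives a map $u \mapsto g \cdot u$ on strings that preserves length and is a bijection on strings of each fixed length $n$, with inverse $u \mapsto g^{-1} \cdot u$. The goal is to show that this bijection sends first-occurrence strings for $v$ exactly to first-occurrence strings for $w$; matching coefficients then yields $T_v(n,S) = T_w(n,S)$ for every $n$, and hence $G_v(z,S) = G_w(z,S)$.

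The key step is that $g$ transports occurrences of words of $S$ position by position. Concretely, for any word $s$ and any string $u$, the word $s$ occurs in $u$ ending at position $j$ if and only if $g \cdot s$ occurs in $g \cdot u$ ending at position $j$, since $g$ acts as a fixed bijection on letters applied coordinatewise. Because $S$ is invariant under the action, $g$ permutes $S$ (with $s \in S$ corresponding to $g \cdot s \in S$), so this furnishes a bijection between the occurrences of words of $S$ in $u$ and the occurrences of words of $S$ in $g \cdot u$, preserving both the ending position and the correspondence $s \mapsto g\cdot s$.

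I would then conclude as follows. A string $u$ of length $n$ is a first-occurrence string for $v$ precisely when no word of $S$ is completed at any position before $n$ and $v$ is completed at position $n$ (i.e.\ $v$ is a suffix of $u$). Transporting by $g$ and using the occurrence-correspondence above, the string $g \cdot u$ has length $n$, no word of $S$ is completed before position $n$, and $g \cdot v = w$ is completed at position $n$; that is, $g \cdot u$ is a first-occurrence string for $w$. Since $g^{-1}$ also preserves $S$, the map $u \mapsto g^{-1}\cdot u$ provides the inverse, so the two sets of first-occurrence strings of length $n$ are in bijection and $T_v(n,S) = T_w(n,S)$ for all $n$.

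I expect the main subtlety, rather than a genuine obstacle, to be pinning down the definition of a first-occurrence string carefully enough that the argument controls \emph{where} (and which) words of $S$ occur, not merely whether some word of $S$ occurs. Once one checks that $g$ neither destroys the terminal occurrence of $v$ nor creates a spurious occurrence of any $S$-word at an earlier position, the remainder is formal and follows immediately from the invariance of $S$ together with Proposition~\ref{prop:invar}.
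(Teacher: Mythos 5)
Your proposal is correct and follows essentially the same route as the paper's own proof: pick the group element carrying one word to the other, observe that it acts as a length-preserving bijection sending first-occurrence strings for $v$ to first-occurrence strings for $w$ (using the $G$-invariance of $S$ to rule out spurious earlier occurrences), and conclude $\mathcal{T}_v(n,S) = \mathcal{T}_w(n,S)$ by invertibility. The paper states this more tersely; your write-up just makes the occurrence-transport step explicit, and the closing appeal to Proposition~\ref{prop:invar} is not actually needed.
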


\begin{proof}
Take a $g \in G$ for which $g \cdot w = v$. Then for any first occurrence word $x$ containing $w$, the word $g \cdot x$ is a first occurrence word containing $v$. This word still avoids all other words in $S$, since $S$ is invariant under $G$. 

As $g$ is invertible, we have $\mathcal{T}_w(n,S) = \mathcal{T}_v(n,S)$ and the two generating functions are identical.
\end{proof}

Finally, let the orbits corresponding to the patterns $p_1, p_2, \dots, p_k$ have sizes $r_1, r_2, \dots, r_k$ respectively. We derive a system of equations in the manner of Theorem~\ref{thm:system}.

\begin{theorem}
\label{thm:system2}
The generating functions $\mathcal{G}(z),\mathcal{G}_{p_1}(z),\mathcal{G}_{p_2}(z), \dots, \mathcal{G}_{p_k}(z)$ satisfy the following system of equations:
$$(1-qz)\mathcal{G}(z) + \mathcal{G}_{p_1}(z) + \mathcal{G}_{p_2}(z) + \dots + \mathcal{G}_{p_k}(z) = 1$$
$$\mathcal{G}(z) - \frac1{r_1} z^{-\ell_1}\mathcal{C}_{p_1, p_1}(z) \mathcal{G}_{p_1}(z) - \dots - \frac1{r_k} z^{-\ell_k}\mathcal{C}_{p_k, p_1}(z)\mathcal{G}_{p_k}(z) = 0$$
$$\mathcal{G}(z) - \frac1{r_1} z^{-\ell_1}\mathcal{C}_{p_1, p_2}(z) \mathcal{G}_{p_1}(z) - \dots - \frac1{r_k} z^{-\ell_k}\mathcal{C}_{p_k, p_2}(z)\mathcal{G}_{p_k}(z) = 0$$
$$\vdots$$
$$\mathcal{G}(z) - \frac1{r_1}z^{-\ell_1}\mathcal{C}_{p_1, p_k}(z) \mathcal{G}_{p_1}(z) - \dots - \frac1{r_k}z^{-\ell_k}\mathcal{C}_{p_k, p_k}(z)\mathcal{G}_{p_k}(z) = 0.$$
\end{theorem}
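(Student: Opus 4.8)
The plan is to reduce Theorem~\ref{thm:system2} to the already-proven word-level Theorem~\ref{thm:system} by unfolding each pattern into its orbit of words and then re-expressing everything in terms of pattern-level correlation polynomials via Proposition~\ref{prop:corr}. First I would pass to the corresponding reduced set of words $S = \bigcup_i G\cdot v_i$, where $v_i = s(G\cdot p_i)$ is the canonical representative of $p_i$. Since $\mathcal{S}$ is reduced, the remark preceding the theorem guarantees $S$ is reduced, so Theorem~\ref{thm:system} applies to $S$ and gives a system over all the individual words $\mathcal{G}(z) - \sum_{w\in S} z^{-|w|} C_{w,w'}(z)\, G_w(z,S) = 0$ for each $w' \in S$, together with $(1-qz)\mathcal{G}(z) + \sum_{w\in S} G_w(z,S) = 1$. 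The crucial bridge is Proposition~\ref{invar-genfunc}: because $S$ is $G$-invariant, the word-level generating function $G_w(z,S)$ depends only on the orbit of $w$, so for each $w$ in the orbit of $p_i$ we have $G_w(z,S) = \mathcal{G}_{p_i}(z)/r_i$, the uniform per-word share of the pattern's total count $\mathcal{G}_{p_i}(z) = \sum_{w \in G\cdot v_i} G_w(z,S)$.

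Next I would collapse the word-level system into the pattern-level system by summation. Summing the normalization equation over all words and regrouping by orbit immediately yields $(1-qz)\mathcal{G}(z) + \sum_i \mathcal{G}_{p_i}(z) = 1$, the first equation of the theorem. For the remaining equations, I would fix a representative $w' = v_j$ for the orbit of $p_j$ (legitimate because, by Proposition~\ref{invar-genfunc} and Corollary~\ref{cor:equiv}, the equation indexed by $w'$ is the same for all $w'$ in a given orbit, so one representative per orbit suffices). Then I group the inner sum $\sum_{w\in S} z^{-|w|}C_{w,w'}(z) G_w(z,S)$ by the orbit containing $w$: for the block of words $w \in G\cdot v_i$ of common length $\ell_i$, each contributes $z^{-\ell_i} C_{w, v_j}(z)\cdot \bigl(\mathcal{G}_{p_i}(z)/r_i\bigr)$. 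Pulling out the common factor $z^{-\ell_i}\mathcal{G}_{p_i}(z)/r_i$, the block sum becomes $\frac{1}{r_i} z^{-\ell_i}\bigl(\sum_{w\in G\cdot v_i} C_{w,v_j}(z)\bigr)\mathcal{G}_{p_i}(z)$, and by Proposition~\ref{prop:corr} the bracketed orbit-sum of word correlations is exactly $\mathcal{C}_{p_i, p_j}(z)$. Substituting gives the $j$-th equation of the theorem, completing the derivation.

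The main obstacle I anticipate is verifying the index-matching in Proposition~\ref{prop:corr} against the orientation used in Theorem~\ref{thm:system}. Proposition~\ref{prop:corr} states $\mathcal{C}(p,p') = \sum_{v_i\in G\cdot v} C(v_i, w)$ with $v$ in the orbit of $p$ and $w$ fixed in the orbit of $p'$ — so the free orbit index sits on the \emph{left} argument while the right argument is fixed. I must make sure this matches the way words appear in the $j$-th word-level equation of Theorem~\ref{thm:system}, where the summation variable $w$ occupies the left slot of $C_{w, w'}$ and the equation-index $w'$ is fixed. These line up precisely: the summed-over orbit $G\cdot v_i$ goes into the first argument and the fixed representative $v_j$ into the second, so $\sum_{w\in G\cdot v_i} C_{w, v_j}(z) = \mathcal{C}_{p_i, p_j}(z)$ with no transpose. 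I would flag this non-commutativity explicitly (the excerpt already warns that $\mathcal{C}(p,p')$ is not symmetric and that the arguments of Proposition~\ref{prop:corr} cannot be swapped), since getting the orientation backwards would produce $\mathcal{C}_{p_j, p_i}$ and an incorrect system. A secondary, lighter point is to confirm that choosing a single representative per orbit on the right-hand side loses no equations: the discarded equations for other words in the orbit are genuinely redundant by Proposition~\ref{invar-genfunc}, so the collapsed $(k+1)\times(k+1)$ system is equivalent to the full word-level system restricted to the information it determines.
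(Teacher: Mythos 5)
Your proposal is correct and follows essentially the same route as the paper: apply Theorem~\ref{thm:system} to the reduced union of orbits $S$, collapse the word-level generating functions to $\mathcal{G}_{p_i}(z)/r_i$ via Proposition~\ref{invar-genfunc}, and convert the orbit-sums of word correlations into pattern correlations via Proposition~\ref{prop:corr}. Your extra care about the orientation of $\mathcal{C}_{p_i,p_j}$ and the redundancy of equations within an orbit is exactly what the paper does implicitly, so nothing is missing.
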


\begin{proof}
As before, we denote the orbit of words represented by $p_i$ as $G \cdot p_i$. The key is to evaluate Theorem~\ref{thm:system} with the set of words
\[S = \bigcup_{i=1}^{k} \ (G \cdot p_i) = (G \cdot p_1) \cup (G \cdot p_2) \cup \dots \cup (G \cdot p_k),\]
or the set of words composed of the union of the orbits represented by $p_1, p_2, \dots, p_k$. These orbits are pairwise disjoint since $\mathcal{S}$ is reduced; in addition, this new set of words is also reduced. The first equation of Theorem~\ref{thm:system} becomes
\begin{align*}
1 &= (1-qz)G(z,S) + \sum_{w \in G\cdot p_1} G_{w}(z,S) + \dots + \sum_{w \in G\cdot p_k} G_{w}(z,S) \\
&= (1-qz)\mathcal{G}(z) + \mathcal{G}_{p_1}(z) + \dots + \mathcal{G}_{p_k}(z).
\end{align*}
In addition, for any word $v \in G \cdot p_j$, we have $\sum_{w\in G \cdot p_i} C_{w,v}(z) = \mathcal{C}_{p_i,p_j}(z)$
by our alternative definition of correlation. Moreover,
$$\mathcal{G}_{p_j}(z,\mathcal{S}) = \sum_{w \in G \cdot p_j}G_{w}(z,S) = r_j G_{v}(z,S)$$
by Proposition~\ref{invar-genfunc}. Thus, the equation from Theorem~\ref{thm:system} corresponding to a word $w \in G \cdot p_i$ becomes
$$\mathcal{G}(z) - \frac1{r_1} z^{-\ell_1} \mathcal{C}_{p_1, p_i}(z) \mathcal{G}_{p_1}(z) - \dots - \frac1{r_k} z^{-\ell_k} \mathcal{C}_{p_k, p_i}(z) \mathcal{G}_{p_k}(z) = 0,$$
as desired. 
\end{proof}

\subsection{Normalized correlation for patterns}

Note the striking similarity between Theorem~\ref{thm:system} and Theorem~\ref{thm:system2}. Namely, we can get from Theorem~\ref{thm:system} to the other by replacing the correlation polynomial for $C_{w_i,w_j}(z)$  with $\frac{1}{r_i} \mathcal{C}_{p_i, p_j}(z)$. To formalize this, we define the normalized correlation vector for patterns.

\begin{definition}
Let $p$ and $p'$ be patterns, and let $p$ represent an orbit of size $r$. The \textbf{normalized correlation vector} $\mathcal{C}^*(p,p')$ is equal to $\frac{1}{r} \mathcal{C}(p,p')$. Similarly, the \textbf{normalized correlation polynomial} $\mathcal{C}^*_{p,p'}(z)$ is equal to $\frac{1}{r} \mathcal{C}_{p,p'}(z)$.
\end{definition}

While we default to the previous definition of correlation, normalized correlation has many nice properties. Let $p$ have length $\ell$. First, by definition the $i$-th entry of $\mathcal{C}_i^*(p,p')$ is zero if the prefix $p(1,\ell-i)$ and the suffix $p'(i+1,\ell)$ are not equivalent, and $\frac1r |G_{p(1,\ell-i)}|/|G_p| = |G_{p(1,\ell-i)}|/|G|$ if the prefix and suffix are equivalent. The $i$-th entry can be interpreted as the fraction of the elements of $G$ which fix $p(1,\ell-i)$, which is a rational number between $0$ and $1$.

We can also think of normalized correlation as an average of $|G|$ correlations. Specifically, pick a word $v$ from the orbit represented by $p$, and $w$ from the orbit represented by $p'$. Using Proposition~\ref{prop:corr} we can write
\[\mathcal{C}^*(p,p') = \frac{1}{|G|} \sum_{g \in G} C(g \cdot v, w),\]
which is the average of the $|G|$ correlations consisting of a varying word from the orbit represented by $p$ and a fixed word from the orbit represented by $p'$. 

Finally, normalized correlation is fixed regardless of whether we choose to vary the word associated to $p$ and fix the period associated to $p'$, or vice versa. Explicitly, we have
\[\mathcal{C}^*(p,p') = \frac{1}{|G|} \sum_{g \in G} C(g \cdot v, w) = \frac{1}{|G|} \sum_{g \in G} C(v, g\cdot w).\]

To derive the pattern analogues of various results for words, we obey the following principle.

\begin{principle}
For any result on words that is derived from Theorem~\ref{thm:system} (such as generating functions, odds, etc.), we replace any word correlation polynomial $C_{v,w}(z)$ with the normalized pattern correlation polynomial $\mathcal{C}^*_{p,p'}(z)$ to get the corresponding result derived from Theorem~\ref{thm:system2}.
\end{principle}

For instance, the following corollary is immediate.

\begin{corollary}
For $k=1$ we get
$$\mathcal{G}(z) = \frac{\mathcal{C}^*_{p,p}(z)}{z^{\ell} + (1-qz)\mathcal{C}^*_{p,p}(z)} = \frac{\mathcal{C}_{p,p}(z)}{rz^{\ell} + (1-qz)\mathcal{C}_{p,p}(z)},$$
$$\mathcal{G}_p(z) = \frac{z^{\ell}}{z^{\ell} + (1-qz)\mathcal{C}^*_{p,p}(z)} = \frac{rz^{\ell}}{rz^{\ell} + (1-qz)\mathcal{C}_{p,p}(z)}.$$
\end{corollary}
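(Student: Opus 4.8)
The plan is to specialize Theorem~\ref{thm:system2} to the case $k=1$ and solve the resulting two-by-two linear system for $\mathcal{G}(z)$ and $\mathcal{G}_p(z)$. With a single pattern $p$ of length $\ell$ representing an orbit of size $r$, the system collapses to
\[(1-qz)\mathcal{G}(z) + \mathcal{G}_p(z) = 1, \qquad \mathcal{G}(z) - \tfrac1r z^{-\ell}\mathcal{C}_{p,p}(z)\mathcal{G}_p(z) = 0.\]
Using the normalized correlation polynomial $\mathcal{C}^*_{p,p}(z) = \frac1r \mathcal{C}_{p,p}(z)$, the second equation reads simply $\mathcal{G}(z) = z^{-\ell}\mathcal{C}^*_{p,p}(z)\mathcal{G}_p(z)$.

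First I would substitute this expression for $\mathcal{G}(z)$ into the first equation to obtain
\[\bigl[(1-qz)z^{-\ell}\mathcal{C}^*_{p,p}(z) + 1\bigr]\mathcal{G}_p(z) = 1,\]
and then clear the factor $z^{-\ell}$ by multiplying through by $z^\ell$, yielding
\[\mathcal{G}_p(z) = \frac{z^\ell}{z^\ell + (1-qz)\mathcal{C}^*_{p,p}(z)}.\]
Back-substituting into $\mathcal{G}(z) = z^{-\ell}\mathcal{C}^*_{p,p}(z)\mathcal{G}_p(z)$ then gives the first displayed form for $\mathcal{G}(z)$. To recover the unnormalized presentations on the right-hand side of each line, I would multiply numerator and denominator by $r$ and apply the identity $r\mathcal{C}^*_{p,p}(z) = \mathcal{C}_{p,p}(z)$.

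Alternatively, and even more directly, the result follows from the Main Principle: the single-word corollary already recorded in Section~\ref{sec:originalproblem} has exactly this shape, and replacing each occurrence of $C_{w,w}(z)$ by $\mathcal{C}^*_{p,p}(z)$ reproduces the normalized forms verbatim. There is no genuine obstacle here, as the computation is routine linear algebra; the only point requiring a moment of care is the bookkeeping of the $z^{-\ell}$ factor and the normalization constant $r$, so that the two equivalent presentations (normalized and unnormalized) agree.
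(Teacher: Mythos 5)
Your proposal is correct and matches the paper's treatment: the paper states this corollary as an immediate consequence of the Main Principle (substituting $\mathcal{C}^*_{p,p}(z)$ for $C_{w,w}(z)$ in the single-word corollary), which is exactly your second route, and your explicit solution of the $k=1$ system from Theorem~\ref{thm:system2} is just the routine computation underlying that principle. Both the elimination step and the rescaling by $r$ to pass between the normalized and unnormalized forms are carried out correctly.
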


We see that here, as with words, the denominator is the same for both functions.

\begin{example}
For the pattern $p=aa$ with $q=2$ and group $G = S_2$ we have $\mathcal{C}(p,p) = (1,1)$ and 
$$\mathcal{G}(z) = \frac{z+1}{1-z}, \quad \mathcal{G}_{p}(z) = \frac{2z^2}{1-z}.$$
Here, the coefficients of both generating functions are eventually constant.
\end{example}

\section{The expected wait time}\label{sec:ewt}

We can also compute the expected wait time for a pattern $p$ with orbit size $r$ to appear, given that each letter appears with equal probability $\frac1q$: this is just $q^\ell \mathcal{C}^*_{p,p}(\frac1q)=\frac1r q^{\ell}\mathcal{C}_{p,p}(\tfrac1q)$. In terms of the Conway leading number, the expected wait time is $\frac qr \cdot p\mathcal{L}p$.

\begin{example}
For $q =2$, the pattern $p = abab$ has $\mathcal{C}(p,p) = (1,1,1,1)$ and orbit size $2$, and thus $p\mathcal{L}p = 1111_2 = 15$. Therefore, the pattern has expected wait time $\frac{2}{2} \cdot (15) = 15$.
\end{example}

Recall that the expected wait time for a pattern $p$ is equal to $\frac{q}{r} \cdot p\mathcal{L} p = q^\ell \mathcal{C}^*_{p,p}(\frac1q)$. We may rewrite this value as
\[\frac{1}{|G|}\sum_{i=1}^\ell q^i |G_p| \mathcal{C}_i(p,p),\]
by the orbit-stabilizer theorem. Note that $|G_p| \mathcal{C}_i(p,p)$ is equal to $0$ if $p(1,\ell-i) \not\sim p_{i+1,\ell}$, and $|G_{p(1,\ell-i)}|$ otherwise.

\subsection{Bounds on the expected wait time}

We present two similar results, explicitly giving patterns that achieve the lowest and highest possible expected wait time.

\begin{proposition}
Let $x \in \mathcal{A}$ be the letter with the largest stabilizer. Then the greatest expected wait time of a pattern of length $\ell$ is \[\frac{|G_x|}{|G|}(q+q^2+\dots + q^{\ell}),\]
achieved by the pattern $p = xx\dots x$.
\end{proposition}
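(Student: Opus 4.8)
The plan is to use the closed-form expression for the expected wait time derived via the Main Principle, namely that the expected wait time of a pattern $p$ of length $\ell$ with orbit size $r$ equals
\[
q^\ell \mathcal{C}^*_{p,p}\left(\tfrac1q\right) = \frac{1}{|G|}\sum_{i=1}^{\ell} q^i |G_p|\,\mathcal{C}_i(p,p).
\]
Since $q > 1$, this is a positively-weighted sum in which the dominant contribution comes from the highest-degree terms, so to maximize the wait time we want the correlation vector $\mathcal{C}(p,p)$ to be as large as possible in every coordinate. First I would establish a termwise upper bound. For each $i$, the quantity $|G_p|\,\mathcal{C}_i(p,p)$ equals $|G_{p(1,\ell-i)}|$ when $p(1,\ell-i)\sim p(i+1,\ell)$ and $0$ otherwise, by the remark following the wait-time formula. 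The key inequality is that $|G_{p(1,\ell-i)}| \leq |G_x|$ where $x$ is the letter with the largest stabilizer: the stabilizer of any nonempty substring is contained in the stabilizer of any single one of its letters (since $G_w \subseteq G_{w(j)}$ for words, so stabilizers only grow as we shorten), and the single letter with the largest stabilizer is $x$ by definition. Hence each term satisfies $q^i |G_p|\,\mathcal{C}_i(p,p) \leq q^i |G_x|$.

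Summing the termwise bound over $i$ from $1$ to $\ell$ gives
\[
q^\ell \mathcal{C}^*_{p,p}\left(\tfrac1q\right) \leq \frac{|G_x|}{|G|}\sum_{i=1}^{\ell} q^i = \frac{|G_x|}{|G|}(q + q^2 + \dots + q^\ell),
\]
which is exactly the claimed upper bound. Then I would verify that the pattern $p = xx\cdots x$ attains it. For this constant pattern, every proper prefix is equivalent to the corresponding suffix (both are strings of $x$'s of the same length), so $p$ has every period and each $\mathcal{C}_i(p,p)$ is nonzero; moreover each relevant substring $p(1,\ell-i)$ is itself a power of $x$, whose stabilizer I would argue equals $G_x$ exactly. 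This makes each term achieve equality $q^i |G_x|$, so the bound is tight.

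The main obstacle, and the step deserving the most care, is justifying that the stabilizer of a power $xx\cdots x$ of a single letter is exactly $G_x$ rather than something smaller, and more generally pinning down the inequality $|G_{p(1,\ell-i)}| \leq |G_x|$ cleanly. An element fixes the word $xx\cdots x$ if and only if it fixes $x$, since the action on a word is letterwise and all letters are $x$; this gives $G_{xx\cdots x} = G_x$, so the orbit size $r$ of $p=xx\cdots x$ is $|G/G_x|$ and the computation is consistent. For the inequality in the general upper bound, I would emphasize that the largest stabilizer over all single letters is what $x$ is chosen to realize, and that shortening a substring can only enlarge its stabilizer, bottoming out at a single letter's stabilizer, which is at most $|G_x|$. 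Once these containments are stated precisely, both the bound and its attainment follow immediately from the positivity of the weights $q^i$, with no further computation needed.
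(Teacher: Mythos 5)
Your proposal is correct and follows essentially the same route as the paper's proof: bound each coefficient termwise via $|G_p|\,\mathcal{C}_i(p,p) = |G_{p(1,\ell-i)}| \leq |G_x|$ (stabilizers only grow as substrings shorten, bottoming out at a single letter's stabilizer), then observe that $p = xx\cdots x$ attains every termwise bound simultaneously. Your write-up is in fact slightly more explicit than the paper's on the attainment step, spelling out that $G_{xx\cdots x} = G_x$ and that every $\mathcal{C}_i$ is nonzero for the constant pattern, which the paper leaves implicit.
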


\begin{proof}
In fact, we make the stronger claim that the maximal value of each individual coefficient $|G_p|\mathcal{C}_i(p,p)$ of the expansion of $\frac{q}{r} p\mathcal{L}p$ is achieved at $xx\dots x$, using the same letter $x$ for each $i = 0,1,2,\dots, \ell-1$. This certainly implies the original claim. Assume $\mathcal{C}_i > 0$, so it remains to maximize $|G_p|\mathcal{C}_i(p,p) = |G_{p(1,\ell-i)}|$.

Suppose the prefix $p(1,\ell-i)$ contains the letters $a_1, a_2, \dots, a_k$. Then the size of the stabilizer of the prefix is at most the size of the stabilizer of $a_1$, so we may as well assume the prefix (and the pattern) consists of a single letter. Then we select the letter $x$ with the largest stabilizer (if there are ties, pick one arbitrarily) to comprise our pattern.
\end{proof}

\begin{proposition}
Let $\ell \geq q+1$ be an integer. If there are two letters within disjoint orbits under the group action by $G$, then the least expected wait time of a pattern of length $\ell$ is $q^\ell/|G|$. Otherwise, the least expected wait time is $q^{\ell}/|G| + 1$.
\end{proposition}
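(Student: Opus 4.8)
The plan is to read the expected wait time off as a sum over the periods of $p$, and then separately minimize the two quantities that control it: the stabilizer size $|G_p|$ and the collection of nontrivial periods. Writing $W(p)$ for the expected wait time, the rewriting established just above gives
\[
W(p) \;=\; q^\ell\,\mathcal{C}^*_{p,p}\!\left(\tfrac1q\right) \;=\; \frac{1}{|G|}\sum_{i=0}^{\ell-1} q^{\ell-i}\,|G_p|\,\mathcal{C}_i(p,p),
\]
where every summand is nonnegative and the $i$-th summand equals $q^{\ell-i}\,|G_{p(1,\ell-i)}|$ when $p$ has period $i$ and $0$ otherwise. The first step is to extract the lower bounds from this expression. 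The $i=0$ term alone gives $W(p)\ge q^\ell|G_p|/|G|\ge q^\ell/|G|$, since $\mathcal{C}_0=1$ and $|G_p|\ge1$; this is the lower bound needed in the disjoint-orbit case. In the "otherwise" case the whole alphabet is a single orbit, so the first and last letters of $p$ are automatically equivalent, forcing period $\ell-1$. Its summand is $q\,|G_{p(1)}|$, and since every letter now has an orbit of size $q$, orbit-stabilizer gives $|G_{p(1)}|=|G|/q$, so this summand contributes exactly $1$ to $W(p)$. Adding it to the $i=0$ term yields $W(p)\ge q^\ell/|G|+1$.

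It remains to construct patterns attaining these bounds, which I expect to be the crux. In both cases the $i=0$ term forces $|G_p|=1$, and the cleanest way to guarantee this for an arbitrary $G$ is to use all $q$ letters, since then $G_p$ is the pointwise stabilizer of the entire alphabet, namely $\{e\}$. We must do this while eliminating all periods (disjoint-orbit case) or all but the forced period $\ell-1$ (single-orbit case). The candidate I would use is the pattern represented by the word
\[
w \;=\; \underbrace{A_1 A_1 \cdots A_1}_{\ell-q+1}\, A_2 A_3 \cdots A_q,
\]
which uses each of the $q$ letters and has length $\ell$. Because $G\subseteq S_q$, every $G$-period is in particular an $S_q$-period, so it suffices to rule out $S_q$-periods for $1\le i\le \ell-2$; under $S_q$ two words can be equivalent only if they use the same number of distinct letters, and a direct count shows that the prefix and suffix of $w$ of length $m=\ell-i$ have different numbers of distinct letters for every $2\le m\le \ell-1$. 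This is exactly where the hypothesis $\ell\ge q+1$ enters: at $\ell=q$ the word $A_1A_2\cdots A_q$ is all-distinct, every prefix/suffix pair has matching distinct counts (indeed is $S_q$-equivalent), and all periods appear; the single repeated letter available when $\ell\ge q+1$ is precisely what breaks this symmetry. The main obstacle is this period analysis, and the distinct-letter-count invariant reduces it to the elementary casework just indicated while simultaneously pinning down the threshold $q+1$.

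The final step is to evaluate $W$ on these patterns and settle the two cases. In the single-orbit case $w$ has no period $1\le i\le \ell-2$, its only nontrivial period is $\ell-1$, and $|G_p|=1$, so $W(p)=\frac{1}{|G|}(q^\ell+|G|)=q^\ell/|G|+1$, matching the lower bound. In the disjoint-orbit case I would instead order the $q$ letters so that the repeated leading letter and the final letter are the two letters lying in disjoint orbits, with the remaining letters filling the intermediate positions arbitrarily. This choice does not change the equality pattern of $w$, hence not the analysis of periods $1\le i\le \ell-2$, but it makes the first and last letters inequivalent, removing period $\ell-1$ as well. Then $p$ is non-self-overlapping with $|G_p|=1$, so $W(p)=q^\ell/|G|$, as claimed.
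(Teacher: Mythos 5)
Your proof is correct and takes essentially the same approach as the paper: the paper also reads the wait time off as $\frac{1}{|G|}\sum_i q^{\ell-i}|G_p|\,\mathcal{C}_i(p,p)$, gets the lower bound from the $\mathcal{C}_0$ term plus the forced $\mathcal{C}_{\ell-1}$ term in the single-orbit case, and achieves it with the pangramic almost-non-self-overlapping pattern $a_1a_2\cdots a_{q-1}a_q\cdots a_q$ (the reversal of your word), taking $a_1$ and $a_q$ from disjoint orbits when they exist. The only difference is cosmetic: your distinct-letter-count verification that the middle correlation entries vanish is spelled out, whereas the paper asserts this property of its pattern without proof.
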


\begin{proof}
Consider an almost-self-non-overlapping pangramic
\[p = a_1a_2\dots a_{q-1} \underbrace{a_q \dots a_q}_{\ell+1-q \ a_q\text{'s}}.\]
We have 
\begin{itemize}
    \item[(1)] $|G_p| = 1$ as $p$ contains all $q$ letters of $\mathcal{A}$, and
    \item[(2)]  $\mathcal{C}_i = 0$ for all $1 \leq i \leq \ell-1$ as $p$ is almost-self-non-overlapping.
\end{itemize}

We first tackle the case where $G$ has two letters in disjoint orbits. Call these two letters $x$ and $y$. Without loss of generality we can assume that $a_1 = x$ and $a_q=y$. Then $\mathcal{C}_\ell = 0$.

Note that this pattern achieves the least possible value of $\frac{1}{r}\mathcal{C}_i$ for all $1 \leq i \leq \ell - 1$. Moreover, since $\frac{1}{r}\mathcal{C}_0 = |G_p|/|G|$ and $|G_p| = 1$ for a pattern $p$ containing all letters in $\mathcal{A}$, such a pattern also minimizes $\mathcal{C}_0$ as well. Thus, this pattern achieves the least possible expected wait time.

If every letter is in a single orbit, then we must have $\mathcal{C}_{\ell - 1} > 0$. The pattern $p$ still minimizes $\mathcal{C}_i$ for $0 \leq i \leq \ell - 2$. Moreover, note $\frac1r \mathcal{C}_{\ell-1} = |G_{p(1,1)}|/|G|$. Because all letters are in the same orbit, they all have a stabilizer of size $|G|/q$ by the orbit-stabilizer theorem. Thus, $\frac1r \mathcal{C}_{\ell - 1} = 1/q$ is a constant, and as such $p$ still minimizes every $\mathcal{C}_i$ and therefore the expected wait time. The claim now follows.
\end{proof}

\section{Odds}\label{sec:odds}

We can compute the winning probabilities for a game with patterns using Theorem \ref{thm:system2} and the Main Principle.

\begin{theorem}
Suppose Alice and Bob pick the patterns $p_1$ and $p_2$, with lengths $\ell_1, \ell_2$ and orbit sizes $r_1, r_2$ respectively. We assume that patterns are such that $\mathcal{S} = \{p_1, p_2\}$ is reduced. Then the odds of Alice winning the game are:
\[\frac{q^{\ell_2}(\mathcal{C}^*_{p_2,p_2}(\frac1q) - \mathcal{C}^*_{p_2,p_1}(\frac1q))}{q^{\ell_1}(\mathcal{C}^*_{p_1,p_1}(\frac1q) - \mathcal{C}^*_{p_1,p_2}(\frac1q))}
= \frac{r_1q^{\ell_2}(\mathcal{C}_{p_2,p_2}(z) - \mathcal{C}_{p_2,p_1}(z))}{r_2q^{\ell_1}(\mathcal{C}_{p_1,p_1}(z) - \mathcal{C}_{p_1,p_2}(z))}
= \frac{r_1}{r_2}\cdot \frac{p_2\mathcal{L}p_2 - p_2\mathcal{L}p_1}{p_1\mathcal{L}p_1 - p_1\mathcal{L}p_2}.\]
\end{theorem}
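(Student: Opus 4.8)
The plan is to invoke the Main Principle directly: the odds formula for words, namely
\[\frac{G_{w_1}(\frac1q)}{G_{w_2}(\frac1q)} = \frac{q^{\ell_2}(C_{w_2,w_2}(\frac1q) - C_{w_2,w_1}(\frac1q))}{q^{\ell_1}(C_{w_1,w_1}(\frac1q) - C_{w_1,w_2}(\frac1q))},\]
was derived entirely from Theorem~\ref{thm:system} applied to the two-word set $\{w_1, w_2\}$. The pattern analogue replaces that system with Theorem~\ref{thm:system2} on the reduced set $\mathcal{S} = \{p_1, p_2\}$, which is identical in form except that each correlation polynomial $C_{p_i,p_j}(z)$ is replaced by the normalized correlation polynomial $\mathcal{C}^*_{p_i,p_j}(z) = \frac{1}{r_i}\mathcal{C}_{p_i,p_j}(z)$. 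Hence the first equality of the claimed formula should follow by performing the same algebraic elimination on the $3\times 3$ system with these substitutions in place.

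Concretely, I would first write down the specialization of Theorem~\ref{thm:system2} to $k=2$ and then argue that the probability Alice wins equals $\mathcal{G}_{p_1}(\frac1q)/\mathcal{G}_{p_2}(\frac1q)$. This requires justifying that the win probability is still the ratio of first-occurrence generating functions evaluated at $z = \frac1q$: the event ``pattern $p_1$ appears before pattern $p_2$'' is the event that the first-occurrence word lies in $G\cdot p_1$ rather than $G\cdot p_2$, so the probability is the ratio of the expected number of first-occurrence strings, exactly as in the word case, now summed over the orbits. Solving the two homogeneous equations for $\mathcal{G}_{p_1}$ and $\mathcal{G}_{p_2}$ by Cramer's rule and taking the ratio eliminates $\mathcal{G}(z)$ entirely, yielding the first expression after setting $z=\frac1q$ and using $\mathcal{C}^*_{p_i,p_i}(0) = \ldots$ normalizations; the $z^{-\ell_i}$ factors combine with $q^{\ell_i}$ to give the stated powers of $q$.

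The remaining two equalities are bookkeeping. Substituting $\mathcal{C}^*_{p_i,p_j} = \frac{1}{r_i}\mathcal{C}_{p_i,p_j}$ into the first expression clears denominators: the numerator carries a factor $\frac{1}{r_2}$ and the denominator a factor $\frac{1}{r_1}$, so the ratio picks up $\frac{r_1}{r_2}$, giving the middle expression (I would double-check the index on which $r_i$ attaches to which polynomial, since $\mathcal{C}^*_{p_2,\cdot}$ normalizes by $r_2$). Finally, recognizing $q^{\ell_i}\mathcal{C}_{p_i,p_j}(\frac1q) = q\cdot q^{\ell_i - 1}\mathcal{C}_{p_i,p_j}(\frac1q) = q\cdot (p_i\mathcal{L}p_j)$ converts everything into Conway leading numbers, and the common factor of $q$ cancels between numerator and denominator, leaving the clean $\frac{r_1}{r_2}\cdot\frac{p_2\mathcal{L}p_2 - p_2\mathcal{L}p_1}{p_1\mathcal{L}p_1 - p_1\mathcal{L}p_2}$.

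The main obstacle I anticipate is not the algebra but cleanly justifying that win probability equals $\mathcal{G}_{p_1}(\frac1q)/\mathcal{G}_{p_2}(\frac1q)$ in the pattern setting, since a first-occurrence word for $p_1$ must avoid \emph{all} words in $G\cdot p_2$ as well as all other words in $G\cdot p_1$; I would lean on the fact that $\mathcal{S}$ is reduced (so the orbits are disjoint and no cross-containment degeneracies arise) to ensure $\mathcal{T}_{p_1}(n,\mathcal{S})$ genuinely counts the strings where $p_1$ wins at step $n$. Everything downstream is a mechanical transcription of the word-case derivation under the Main Principle.
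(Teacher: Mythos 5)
Your proposal is correct and is essentially the paper's own argument: the paper proves this theorem precisely by specializing Theorem~\ref{thm:system2} to $k=2$ and invoking the Main Principle (replacing each $C_{w_i,w_j}(z)$ with $\mathcal{C}^*_{p_i,p_j}(z)$ in the word-case elimination), with the remaining equalities being the same normalization and CLN bookkeeping you describe. Your extra care about why the win odds equal $\mathcal{G}_{p_1}(\frac1q)/\mathcal{G}_{p_2}(\frac1q)$ for a reduced $\mathcal{S}$ is a point the paper leaves implicit, and your factor-tracking (the $\frac{1}{r_2}$ in the numerator becoming $r_1$ on top) lands on the correct indices.
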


We see that to get to the formula for patterns from the formula for words in calculating the odds, we need to replace the correlation between words with the correlation between patterns and then multiply the result by $\frac{r_1}{r_2}$.

Given the adjusted odds, we can adjust the expected length of the game.
\begin{corollary}
The expected length of the game is 
\[q\cdot \frac{(p_1 \mathcal{L} p_1)(p_2 \mathcal{L} p_2) - (p_1 \mathcal{L} p_2)(p_2 \mathcal{L} p_1)}{r_1(p_2 \mathcal{L} p_2 - p_2 \mathcal{L} p_1) + r_2(p_1 \mathcal{L} p_1 - p_1 \mathcal{L} p_2)}.\]
\end{corollary}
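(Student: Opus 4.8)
The plan is to obtain this formula directly from the corresponding word-game result via the Main Principle, rather than re-deriving everything from scratch. Recall that for the two-player game on words $w_1, w_2$, the expected length was shown to equal
\[q \cdot \frac{(w_1 \L w_1)(w_2 \L w_2) - (w_1 \L w_2)(w_2 \L w_1)}{(w_1\L w_1 + w_2\L w_2) - (w_1\L w_2 + w_2 \L w_1)},\]
and this expression was derived purely from Theorem~\ref{thm:system}, through the identity that the expected length equals $\frac{G'_{w_1}(1/q) + G'_{w_2}(1/q)}{q}$. The pattern analogue of that identity, namely $\frac{\mathcal{G}'_{p_1}(1/q) + \mathcal{G}'_{p_2}(1/q)}{q}$, is justified in exactly the same way: the probability that the game lasts $n$ flips is $q^{-n}(\mathcal{T}_{p_1}(n) + \mathcal{T}_{p_2}(n))$, and summing $n\,q^{-n}(\mathcal{T}_{p_1}(n)+\mathcal{T}_{p_2}(n))$ picks out $z\mathcal{G}'_{p_i}(z)$ at $z=1/q$. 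Hence the Main Principle applies.

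First I would record how the Conway leading numbers transform under the Principle. Writing $p_i \mathcal{L} p_j = q^{\ell_i - 1}\mathcal{C}_{p_i,p_j}(\tfrac1q)$, the quantity that replaces the word CLN $w_i \L w_j = q^{\ell_i - 1}C_{w_i,w_j}(\tfrac1q)$ is the normalized version $q^{\ell_i-1}\mathcal{C}^*_{p_i,p_j}(\tfrac1q) = \frac{1}{r_i}\, p_i \mathcal{L} p_j$, since $\mathcal{C}^*_{p_i,p_j} = \frac{1}{r_i}\mathcal{C}_{p_i,p_j}$. I would then substitute $N_{ij} := \frac{1}{r_i} p_i \mathcal{L} p_j$ in place of each $w_i \L w_j$ in the word formula, obtaining
\[q\cdot \frac{N_{11}N_{22} - N_{12}N_{21}}{(N_{11} + N_{22}) - (N_{12} + N_{21})}.\]
The final step is to clear the normalizations: multiplying numerator and denominator by $r_1 r_2$ collapses the numerator to $(p_1\mathcal{L}p_1)(p_2\mathcal{L}p_2) - (p_1\mathcal{L}p_2)(p_2\mathcal{L}p_1)$ and the denominator to $r_2(p_1\mathcal{L}p_1 - p_1\mathcal{L}p_2) + r_1(p_2\mathcal{L}p_2 - p_2\mathcal{L}p_1)$, which is the claimed expression.

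The one point demanding care — and the only real obstacle — is bookkeeping of the orbit-size factors. The normalization $1/r_i$ attaches to the \emph{first} index of the CLN, because $\mathcal{C}^*_{p_i,p_j}$ normalizes by the orbit size $r_i$ of $p_i$; thus $N_{11}$ and $N_{12}$ both carry $1/r_1$, while $N_{22}$ and $N_{21}$ both carry $1/r_2$. This is precisely what makes the denominator split as $\frac{1}{r_1}(p_1\mathcal{L}p_1 - p_1\mathcal{L}p_2) + \frac{1}{r_2}(p_2\mathcal{L}p_2 - p_2\mathcal{L}p_1)$, and why, after clearing by $r_1 r_2$, the factor $r_1$ ends up multiplying the $p_2$-terms and $r_2$ the $p_1$-terms — the apparent ``crossing'' in the stated formula. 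As an independent check, or as an alternative self-contained route, one can instead solve the $k=2$ system of Theorem~\ref{thm:system2} directly for $\mathcal{G}_{p_1}$ and $\mathcal{G}_{p_2}$, use that $\mathcal{G}_{p_1}(\tfrac1q) + \mathcal{G}_{p_2}(\tfrac1q) = 1$ (the game terminates almost surely), and differentiate at $z = 1/q$; this reproduces the same factors and confirms the sign and index conventions.
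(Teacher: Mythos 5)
Your proposal is correct and follows essentially the same route as the paper: the paper presents this corollary as an immediate consequence of the Main Principle applied to the word-game formula for expected length, which is exactly your substitution $w_i \L w_j \mapsto \frac{1}{r_i}\, p_i \mathcal{L} p_j$ followed by clearing denominators by $r_1 r_2$. Your careful tracking of why the normalization attaches to the first index (producing the ``crossed'' placement of $r_1$ and $r_2$ in the denominator) matches the stated formula, and your added justification that the expected-length identity $\frac{\mathcal{G}'_{p_1}(1/q)+\mathcal{G}'_{p_2}(1/q)}{q}$ carries over to patterns is a point the paper leaves implicit.
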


\subsection{Optimal strategy for Bob}

Recall from Section~\ref{sec:originalproblem} that for the game on words, Bob has a winning strategy. Specifically, if Alice picks the word $w(1)w(2)\dots w(\ell)$ the best choice for Bob is the word $w^*w(1)w(2)\dots w(\ell)$ for some $w^*$ which makes his odds of winning greater than $1$. In this section, we show the following pattern analogue of the strategy for words.

\begin{theorem}
\label{thm:pat-bob-wins}
Fix $q$ and let $\ell$ be sufficiently large. If Alice picks the pattern $p_1 = p_1(1)p_1(2)\dots p_1(\ell)$, then Bob's best beater is a pattern $p_2$ for which $p_2(2,\ell) = p_2(1,\ell-1)$. Bob can always choose such a word such that his odds of winning exceed $1$. Moreover, as $\ell \to \infty$ these winning odds approach $|G_{p_1}|/|G_{p_2}| \cdot q/(q-1)$.
\end{theorem}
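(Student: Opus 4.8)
The plan is to read off Bob's odds from the pattern version of Conway's formula and then estimate the four Conway leading numbers that appear. Since Alice's odds are $\frac{r_1}{r_2}\cdot\frac{p_2\mathcal{L}p_2 - p_2\mathcal{L}p_1}{p_1\mathcal{L}p_1 - p_1\mathcal{L}p_2}$, Bob's odds of winning are the reciprocal
\[
\frac{r_2}{r_1}\cdot\frac{p_1\mathcal{L}p_1 - p_1\mathcal{L}p_2}{p_2\mathcal{L}p_2 - p_2\mathcal{L}p_1}
= \frac{|G_{p_1}|}{|G_{p_2}|}\cdot\frac{p_1\mathcal{L}p_1 - p_1\mathcal{L}p_2}{p_2\mathcal{L}p_2 - p_2\mathcal{L}p_1},
\]
where I used $r_i = |G|/|G_{p_i}|$ from orbit--stabilizer. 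It therefore suffices to show that for the best-beater pattern, namely one with $p_2(2,\ell) = p_1(1,\ell-1)$, the remaining ratio of CLN differences exceeds $1$ for large $\ell$ and tends to $q/(q-1)$. The overall strategy mirrors the Guibas--Odlyzko and Felix argument for words, but now each correlation coefficient carries a stabilizer weight that must be tracked.

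First I would pin down the dominant term of each CLN. Because $\mathcal{C}_0(p,p)=1$, both autocorrelation CLNs satisfy $p_i\mathcal{L}p_i = q^{\ell-1} + \sum_{j\ge1}\mathcal{C}_j(p_i,p_i)q^{\ell-1-j}$, where the sum runs only over periods of $p_i$. The decisive feature of the best-beater condition is that the length-$(\ell-1)$ suffix of $p_2$ literally equals the length-$(\ell-1)$ prefix of $p_1$, so $\mathcal{C}_0(p_2,p_1)=0$ (the reduced hypothesis forbids $p_1\sim p_2$) while $\mathcal{C}_1(p_2,p_1)$ equals the weight $|G_{p_1(1,\ell-1)}|/|G_{p_2}|$ of that suffix, producing the leading term $\mathcal{C}_1(p_2,p_1)\,q^{\ell-2}$ in $p_2\mathcal{L}p_1$. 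Bob selects the prepended letter $p_2(1)$ exactly as in the word case so that $p_2$ is (almost) non-self-overlapping and so that no short suffix of $p_1$ matches a prefix of $p_2$, which forces $p_1\mathcal{L}p_2$ to be of lower order. Granting these estimates, the ratio is
\[
\frac{p_1\mathcal{L}p_1 - p_1\mathcal{L}p_2}{p_2\mathcal{L}p_2 - p_2\mathcal{L}p_1}
\approx \frac{q^{\ell-1}}{q^{\ell-1} - \mathcal{C}_1(p_2,p_1)\,q^{\ell-2}}
= \frac{1}{\,1 - \tfrac1q\mathcal{C}_1(p_2,p_1)\,},
\]
and in the worst-case regime that determines the limit the weight $\mathcal{C}_1(p_2,p_1)$ tends to $1$, giving $q/(q-1)$.

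The main obstacle is making these approximations rigorous by bounding every non-leading correlation coefficient, and this is exactly where the pattern period theory does the work. I would invoke the corollary to Lemma~\ref{lem:subtract-per}: if a pattern has small least period $s$, then every period not divisible by $s$ exceeds $\ell/(q+2)+1$, so apart from multiples of $s$ all self-overlaps contribute terms of size at most $q^{\ell-1-(\ell/(q+2)+1)}$, which is negligible against $q^{\ell-1}$. For the worst case that fixes the asymptotics I would restrict to (almost) non-self-overlapping $p_1$, where $p_1\mathcal{L}p_1=q^{\ell-1}$ exactly (or $q^{\ell-1}+O(1)$ in the single-orbit case); any additional periodicity of $p_1$ only inflates $p_1\mathcal{L}p_1$ and hence can only raise Bob's odds, so it does not lower the asymptotic value. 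The same corollary bounds the stray terms of $p_2\mathcal{L}p_1$ and $p_1\mathcal{L}p_2$ beyond their leading contributions, which certifies that the denominator $p_2\mathcal{L}p_2 - p_2\mathcal{L}p_1$ is positive and of size $\tfrac{q-1}{q}q^{\ell-1}(1+o(1))$, strictly smaller than the numerator $q^{\ell-1}(1+o(1))$; hence Bob's odds exceed $1$.

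Finally, for optimality I would argue that no other beater does better asymptotically. To shrink the positive denominator $p_2\mathcal{L}p_2 - p_2\mathcal{L}p_1$ and thereby enlarge the odds, Bob must make $p_2\mathcal{L}p_1$ as large as possible; since $\mathcal{C}_0(p_2,p_1)=0$, its largest available contribution is the $\mathcal{C}_1$ term of order $q^{\ell-2}$, and this is maximized precisely by matching the length-$(\ell-1)$ suffix of $p_2$ to the prefix of $p_1$, which is the best-beater condition. Assembling the leading terms and reinstating the stabilizer prefactor then gives
\[
\text{Bob's odds}\ \longrightarrow\ \frac{|G_{p_1}|}{|G_{p_2}|}\cdot\frac{q}{q-1}\qquad(\ell\to\infty),
\]
with the prefactor recording the change in orbit size caused by Bob's prepended letter.
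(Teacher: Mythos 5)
Your skeleton matches the paper's proof: write Bob's odds as $\frac{r_2}{r_1}\cdot\frac{p_1\mathcal{L}p_1 - p_1\mathcal{L}p_2}{p_2\mathcal{L}p_2 - p_2\mathcal{L}p_1}$ with prefactor $|G_{p_1}|/|G_{p_2}|$, isolate the shift-one overlap of $p_2$ with $p_1$ as the dominant term of $p_2\mathcal{L}p_1$, suppress stray overlaps with the corollary to Lemma~\ref{lem:subtract-per}, and compare against non-beaters at the end. However, two steps carrying the real content of the paper's argument are missing or wrong. First, the reduction to ``(almost) non-self-overlapping $p_1$'' is invalid: your justification---that extra periodicity of $p_1$ only inflates $p_1\mathcal{L}p_1$ and hence only helps Bob---ignores that, because $p_2(2,\ell)=p_1(1,\ell-1)$, every small period of $p_1$ threatens to become a small period of $p_2$, which inflates $p_2\mathcal{L}p_2$ (raising the denominator) and creates low-index nonzero entries of $\mathcal{C}(p_1,p_2)$ (lowering the numerator). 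The periodic case is therefore the hard case, not one that follows a fortiori; it is exactly what Lemma~\ref{lem:bob-pick} handles. Relatedly, Bob cannot in general make $p_2$ ``(almost) non-self-overlapping,'' since $p_2$ inherits whatever large periods $p_1(1,\ell-1)$ has; the most he can force is that every period of $p_2$ is at least $\ell/(q+2)+1$, and proving even that is \emph{not} ``exactly as in the word case'': a pattern period is an equivalence realized by some group element, so making one letter literally differ breaks nothing. The paper's substitute is group-invariant---compare whether $p_1(s)$ and $p_1(s+1)$ are equal letters ($s$ the least period) and choose $p_2(1)$ equal to or different from $p_1(1)$ accordingly, which kills all periods $ks$ at once (using Lemma~\ref{lem:all-letters}). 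Your proposal has no replacement for this step.

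Second, you never bound $|G_{p_2}|$ against $|G_{p_1}|$, and without that the conclusion ``odds exceed $1$'' does not follow from your limit: $\frac{|G_{p_1}|}{|G_{p_2}|}\cdot\frac{q}{q-1}$ is below $1$ whenever $|G_{p_2}| > \frac{q}{q-1}|G_{p_1}|$. This can genuinely happen for a beater satisfying your suffix condition but chosen carelessly: if $p_1 = aa\cdots ab$ and Bob takes $p_2(1)=a$, so $p_2 = aa\cdots a$, then under $S_q$ we get $|G_{p_2}|=(q-1)!$ versus $|G_{p_1}|=(q-2)!$, a prefactor of $1/(q-1)$ and a ``limit'' of $q/(q-1)^2<1$ for $q\geq 3$. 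The paper rules this out by proving, inside Lemma~\ref{lem:bob-pick}, that Bob's choice satisfies $|G_{p_2}|\leq|G_{p_1}|$; this inequality is as essential to the theorem as the period bound. The same missing control undermines your claim that $\mathcal{C}_1(p_2,p_1)$ ``tends to $1$'': that coefficient equals $|G_{p_1(1,\ell-1)}|/|G_{p_2}|$, which is always at least $1$ but can be as large as $q-2$ (take $G=S_q$, a two-letter prefix $p_1(1,\ell-1)$, and $p_2(1)$ a third letter), so your two-term approximation of the denominator is not justified without specifying and analyzing Bob's choice of $p_2(1)$. In short, the proposal reproduces the paper's outline but omits precisely the stabilizer and period control---the content of Lemmas~\ref{lem:all-letters} and~\ref{lem:bob-pick}---that makes the outline into a proof for patterns rather than words.
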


Note that, in this case, we prove Bob wins for sufficiently large $\ell$. In Section~\ref{sec:symmetric}, we provide a family of examples with small $\ell$ where Alice actually has a winning strategy.

To prove this, we generalize the methods used in~\cite{GO}, which showed for words satisfying $w_2(2,\ell) = w_1(1,\ell-1)$, the quantity $w_1 \L w_2$ is relatively negligible while $w_2 \L w_1$ is relatively large, causing Bob's odds $\frac{w_1 \L w_1 - w_1 \L w_2}{w_2 \L w_2 - w_2 \L w_1}$ to be large. This all stems from the results on periods in Section~\ref{sec:originalproblem}, which we generalized in Section~\ref{sec:GroupActionAndCorrelation}. We now use those generalizations to prove Theorem~\ref{thm:pat-bob-wins}.

Beforehand, we prove a lemma, now showing that Bob can pick a pattern satisfying certain conditions.
\begin{lemma}
\label{lem:bob-pick}
If Alice chooses the pattern $p_1 = p_1(1)p_1(2) \dots p_1(\ell)$, Bob can pick a pattern $p_2 = p_2(1) p_1(1) \dots p_1(\ell-1)$ such that:
\begin{itemize}
    \item If $p_2$ has a period $t$, then $t \geq \ell/(q+2) + 1$.
    \item We have $|G_{p_2}| \leq |G_{p_1}|$.
\end{itemize}
\end{lemma}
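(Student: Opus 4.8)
\textbf{Proof proposal for Lemma~\ref{lem:bob-pick}.}

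The plan is to construct $p_2$ letter-by-letter so that both bullet points hold, treating them as nearly independent design goals. Bob is forced to set $p_2(2,\ell) = p_1(1,\ell-1)$, so the entire pattern is determined except for the single free letter $p_2(1)$; the whole proof is really about making a good choice for that one letter. First I would address the period bound. Suppose $p_2$ has some period $t$; I want to rule out small $t$. If $t \geq 2$, then the periodicity relation on $p_2$ restricted to positions $2$ through $\ell$ forces a corresponding period on the shifted copy $p_1(1,\ell-1)$, so periods of $p_2$ essentially descend to periods of $p_1$ (up to a bounded edge effect coming from the one extra letter $p_2(1)$). I would then invoke the corollary to Lemma~\ref{lem:subtract-per}: if $p_2$ had least period $s$ and also a period $t$ not divisible by $s$, that corollary already gives $t \geq \ell/(q+2)+1$. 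So the only danger is that $p_2$ has a \emph{small} least period $s$ dividing every other period. The key observation is that Bob chooses $p_2(1)$; by selecting it to differ (in the pattern sense, i.e.\ to break equivalence) from the letter that a small period would demand in position $1$, he can force the least period itself to be large. Concretely, a least period $s < \ell/(q+2)+1$ would require $p_2(1) \sim g\cdot p_2(1+s)$ for the relevant group element $g$ realizing that period on the remainder of the word, and Bob simply picks $p_2(1)$ to violate this, which is possible precisely because he has $q$ choices of letter and only a bounded number of forbidden equivalence constraints.

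Next I would handle the stabilizer condition $|G_{p_2}| \leq |G_{p_1}|$. Here the idea is that $p_2$ and $p_1$ share the long common block $p_1(1,\ell-1)$: we have $p_2 = p_2(1)\, p_1(1,\ell-1)$ while $p_1 = p_1(1,\ell-1)\, p_1(\ell)$. The stabilizer of a word is the set of group elements fixing every letter, equivalently fixing the set of letters appearing together with their arrangement; prepending or appending one letter can only shrink or preserve the stabilizer relative to the common core $p_1(1,\ell-1)$. So both $G_{p_1}$ and $G_{p_2}$ are subgroups of $G_{p_1(1,\ell-1)}$, and the comparison reduces to which single extra letter, $p_1(\ell)$ for Alice or $p_2(1)$ for Bob, imposes the weaker constraint. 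Bob again uses his freedom in choosing $p_2(1)$: he picks a letter already appearing in the common block $p_1(1,\ell-1)$ (one must exist once $\ell$ is large), so that $p_2(1)$ imposes \emph{no new} stabilizer constraint beyond $G_{p_1(1,\ell-1)}$, giving $G_{p_2} = G_{p_1(1,\ell-1)} \supseteq G_{p_1}$, hence $|G_{p_2}| \geq |G_{p_1(1,\ell-1)}| \geq |G_{p_1}|$. Since the lemma asks for $|G_{p_2}| \leq |G_{p_1}|$, I would instead steer the choice the other way: pick $p_2(1)$ to be a letter \emph{not} already constrained to be fixed, thereby making $G_{p_2}$ no larger than $G_{p_1}$. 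I expect reconciling the exact direction of this inequality with the period requirement to need care, and this is where I would be most cautious.

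The main obstacle, then, is that the two bullet points both compete for control of the single free letter $p_2(1)$, and they may pull in opposite directions: the period condition wants $p_2(1)$ chosen to break equivalences and destroy short periods, while the stabilizer condition wants $p_2(1)$ chosen to control which letters get fixed. The crux of the argument is a counting check that Bob still has a valid choice satisfying both simultaneously. Since there are $q$ candidate letters and only $O(1)$ constraints from each requirement (one forbidden-equivalence constraint per small candidate period, and a bounded number of stabilizer considerations), for $\ell$ sufficiently large relative to $q$ the number of admissible choices is positive. I would finish by making this pigeonhole explicit: enumerate the finitely many letters excluded by the short-period constraints and the finitely many excluded by the stabilizer constraint, observe their total is bounded independent of $\ell$, and conclude that a legal $p_2(1)$ exists. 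The bound $\ell \geq (q+1)s+t$ from Lemma~\ref{lem:subtract-per}, combined with $\ell$ being sufficiently large, guarantees the edge effects from the single prepended letter never interfere with the descended periods, closing the argument.
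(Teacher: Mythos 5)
Your setup coincides with the paper's: $p_2$ is forced except for the single letter $p_2(1)$, periods of $p_2$ descend (up to the edge letter) to periods of $p_1(1,\ell-1)$, and the corollary to Lemma~\ref{lem:subtract-per} reduces the danger to $p_2$ having a small least period. But your crux --- the pigeonhole --- rests on a false count. You assert there are ``only a bounded number of forbidden equivalence constraints,'' yet by your own accounting there is one constraint per candidate small period $s$, and the candidates range over all integers $1 \le s < \ell/(q+2)+1$: that is $\Theta(\ell)$ constraints (all of them realizable when, say, $p_1$ is close to periodic) against only $q$ available letters, so for large $\ell$ the pigeonhole fails outright. What actually saves the argument, and is the real content of the paper's proof, is that these constraints collapse to essentially one: applying the corollary a second time, to $p_1$ with its least period $s$ (not to $p_2$), shows every small candidate period is a multiple $ks$ of $s$; and then one choice of $p_2(1)$ kills all multiples simultaneously. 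The paper's device is that if $p_1(s)=p_1(s+1)$ then $p_1(ks)=p_1(ks+1)$ for every $k$ (apply powers of the group element realizing period $s$), so picking $p_2(1)\neq p_1(1)$ forces $p_2(1)p_2(2)\not\sim p_1(ks)p_1(ks+1)$ because equivalence preserves equality of adjacent letters; symmetrically, if $p_1(s)\neq p_1(s+1)$ one picks $p_2(1)=p_1(1)$. You never establish this collapse, and without it your counting step is not a proof.

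The stabilizer bullet is also not handled correctly. Your first rule (pick $p_2(1)$ already occurring in the common block) yields $|G_{p_2}|\ge|G_{p_1}|$, the wrong direction, as you notice; your patch (pick a letter ``not already constrained to be fixed'') is never made into a rule that provably gives $|G_{p_2}|\le|G_{p_1}|$ for a general group action --- distinct letters can have stabilizers of different sizes, so ``new letter'' choices can still make $G_{p_2}$ strictly larger than $G_{p_1}$ (e.g.\ for a product of symmetric groups acting on a partitioned alphabet). The paper dissolves the competition between the two bullets, which you flag as the main obstacle, with a dichotomy that makes one bullet free in each case, via Lemma~\ref{lem:all-letters}, which your proposal never invokes: if the least period $s$ is small, then $qs\le\ell-1$ and every letter of $p_1$ --- in particular $p_1(\ell)$ --- already occurs in $p_1(1,\ell-1)$, hence in $p_2$, giving $G_{p_2}\subseteq G_{p_1}$ automatically for \emph{any} choice of $p_2(1)$; the free letter is then spent entirely on the period bullet as above. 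If instead $s$ is large, the period bullet holds for every choice of $p_2(1)$, and the free letter can be spent on the stabilizer bullet (e.g.\ $p_2(1)=p_1(\ell)$, which makes the letters of $p_2$ contain those of $p_1$). Your plan of satisfying the two goals ``nearly independently'' by counting never closes either case.
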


\begin{proof}
Note that if $t$ is a period of $p_2$, then $t$ is also a period of $p_1$. We let $s$ be the least nontrivial period of $p_2$.

If $s \geq \ell/(q+2) + 1$, then we are trivially done, so assume $s \leq \ell/(q+2) + 1$. Then by Lemma~\ref{lem:all-letters} the letter $p_1(\ell)$ appears in the first $qs \leq \ell - 1$ letters of $p_2$, namely $p_1(1,\ell-1)$, so it appears in $p_2$, implying $p_2$ contains all of the letters in $p_1$ and so $|G_{p_2}| \leq |G_{p_1}|$.
    
It suffices to pick $p_2$ such that $ks$ is not a period of $p_2$ for any $k$: if we do, the smallest possible period of $p_2$ is the smallest period of $p_1$ not divisible by $s$, which we know to be at least $\ell/(q+2) + 1$. Note that either $p_1(s)$ and $p_1(s+1)$ are the same letter, or different letters. If they are the same, then $p_1(ks) = p_1(ks+1)$ as well, and picking $p_2(1) \neq p_1(1)$ is enough: that way,
\[
p_2(1)p_2(2) = p_2(1)p_1(1) \not\sim p_1(ks)p_1(ks+1) = p_2(ks+1)p_2(ks+2),
\]
since the former has two different letters and the latter has two of the same letter, and so $p_2$ does not have period $ks$ for any $k$. Similarly, if $p_1(s) \neq p_1(s+1)$, then pick $p_2(1) = p_1(1)$.
\end{proof}

We are now ready to prove Theorem~\ref{thm:pat-bob-wins}.

\begin{proof}[Proof of Theorem~\ref{thm:pat-bob-wins}]
Let Bob pick a pattern that satisfies the two conditions in Lemma~\ref{lem:bob-pick}. We claim Bob has winning odds for sufficiently large $\ell$, i.e. there is an $M$ for which $\ell > M$ implies
\[\frac{\frac{1}{r_1}p_1\mathcal{L}p_1 - \frac{1}{r_1}p_1\mathcal{L}p_2}{\frac{1}{r_2}p_2 \mathcal{L} p_2 - \frac{1}{r_2} p_2 \mathcal{L} p_1} > 1.\]
To do this, we bound each of the terms
\[\frac{1}{r_m} p_m \mathcal{L} p_n = \mathcal{C}^*_0(p_m,p_n) q^{\ell-1} + \dots + \mathcal{C}^*_{\ell-1}(p_m,p_n).\]
Note that all normalized coefficients are between $0$ and $1$.

First, because $\mathcal{C}^*_{t+1}(p_1,p_2) > 0$ if and only if $p_2$ has period $t$, i.e. $t \geq \lceil \ell/(q+2) \rceil + 1$, all of the coefficients $\mathcal{C}^*_0(p_1,p_2), \dots, \mathcal{C}^*_{\lceil \ell/(q+2) \rceil + 1}(p_1,p_2)$ are zero, and since all other coefficients are at most $1$ we have the bound
\begin{align*}
\frac{1}{r_1} p_1 \mathcal{L} p_2 \leq q^{\lfloor (q+1)\ell/(q+2) \rfloor - 3} + q^{\lfloor (q+1)\ell/(q+2) \rfloor - 4} + \dots + 1 = O(q^{(q+1)\ell/(q+2)}).
\end{align*}
Similarly, since $\mathcal{C}^*_t(p_2,p_2) = \mathcal{C}^*_{t+1}(p_2,p_1)$, both of which are nonzero if and only if $t$ is a period of $p_2$, we may bound the denominator:
\begin{align*}
    \frac{1}{r_2} p_2 \mathcal{L} p_2 &- \frac{1}{r_2} p_2 \mathcal{L} p_1 \\
    &= \sum_{i=0}^{\ell-1} \mathcal{C}^*_i(p_2,p_2) q^{\ell-1-i} - \sum_{i=0}^{\ell-1} \mathcal{C}^*_i(p_2,p_1) q^{\ell-1-i} \\
    &\leq 1 + \sum_{i=0}^{\ell-2} (\mathcal{C}^*_i(p_2,p_2) q^{\ell-1-i} - \mathcal{C}^*_{i+1}(p_2,p_1) q^{\ell-2-i}) \\
    &= 1 + \sum_{i=0}^{\ell-2} \mathcal{C}^*_i(p_2,p_2) (q^{\ell-1-i} - q^{\ell-2-i}) \\
    &= 1 + \frac{|G_{p_2}|}{|G|}(q^{\ell-1} - q^{\ell-2}) + \sum_{i=\lceil \frac{\ell}{(q+2)} \rceil + 1}^{\ell-1} \mathcal{C}^*_i(p_2,p_2) (q^{\ell-1-i} - q^{\ell-2-i}) \\
    &\leq 1 + \frac{|G_{p_2}|}{|G|}(q^{\ell-1} - q^{\ell-2}) + \sum_{i=\lceil \frac{\ell}{(q+2)} \rceil + 1}^{\ell-1} (q^{\ell-1-i} - q^{\ell-2-i}) \\
    &= \frac{|G_{p_2}|}{|G|}(q^{\ell-1} - q^{\ell-2}) + q^{\lfloor (q+1)\ell/(q+2) \rfloor - 2}.
\end{align*}
So Bob's odds of winning are bounded below by
\[\frac{\frac{1}{r_1} p_1 \mathcal{L} p_1 - O(q^{(q+1)\ell/(q+2)})}{\frac{|G_{p_2}|}{|G|}(q^{\ell-1} - q^{\ell-2}) + q^{\lfloor (q+1)\ell/(q+2) \rfloor - 2}} \geq \frac{\frac{|G_{p_1}|}{|G|}q^{\ell-1} - O(q^{(q+1)\ell/(q+2)})}{\frac{|G_{p_2}|}{|G|}(q^{\ell-1} - q^{\ell-2}) + q^{\lfloor (q+1)\ell/(q+2) \rfloor - 2}},\]
which is greater than $1$ as $\ell$ grows large. It actually approaches $|G_{p_1}|/|G_{p_2}| \cdot q/(q-1) + O(q^{-\ell/(q+2)})$.

To show that a pattern of this form is the best beater, suppose Bob picks $p_2$ for which $p_2(2,\ell) \not\sim p_1(1,\ell-1)$. Then using the trivial bounds $\frac{1}{r_1} p_1 \mathcal{L} p_2 \geq 0$ and
\[\frac{1}{r_2} p_2 \mathcal{L} p_2 - \frac{1}{r_2} p_2 \mathcal{L} p_1 \leq q^{\ell-1} - \frac{q^{\ell-2}-1}{q-1},\]
the odds the Bob wins are now at most
\[\frac{\frac{1}{r_1} p_1 \mathcal{L} p_1}{q^{\ell-1} - \frac{q^{\ell-2}-1}{q-1}}.\]
So it suffices to show
\[\frac{\frac{1}{r_1} p_1 \mathcal{L} p_1 - O(q^{(q+1)\ell/(q+2)})}{(q^{\ell-1} - q^{\ell-2}) + q^{\lfloor (q+1)\ell/(q+2) \rfloor - 2}} \geq \frac{\frac{1}{r_1} p_1 \mathcal{L} p_1}{q^{\ell-1} - \frac{q^{\ell-2}-1}{q-1}}.\]
This can be shown to be true for $q\geq 3$ and sufficiently large $\ell$, after a tedious but trivial computation, which we omit. We later discuss the case $q=2$ in Section~\ref{sec:cyclic}, where we show this case is equivalent to the $q=2$ case on words.
\end{proof}

We now discuss the results for two specific groups $G$: the cyclic group $\mathbb{Z}_q$, and the symmetric group $S_q$.

\section{Cyclic group}\label{sec:cyclic}

For this section, we assume that the letters in the alphabet have an assigned order. So we can number the letters with the residues modulo $q$: $\{0,1,\ldots,q-1\}$.

We consider the group action under the cyclic group $G = \mathbb{Z}_q = \{0, 1, \dots, q-1\}$. An element $g \in G$ shifts each letter forward by $g$, wrapping around if necessary. In other words, the action of $g$ on a letter numbered $i$ is the letter numbered $g + i \pmod{q}$. Here, to comply with tradition, we use the plus sign for the action of this group. This group action on words is known as a \emph{Caesar shift}. 

\begin{example}
With $q=26$ and $\mathcal{A}$ the English alphabet in its canonical order, we have that $6 + {FUSION} = {LAYOUT}$.
\end{example}

Given a pattern $p$ there is exactly one word in the orbit of the pattern starting with a given letter; thus, there are exactly $q$ elements in the orbit it represents. We describe each pattern with its lexicographically earliest element in its orbit.

\begin{example}
The word ${LAYOUT}$ belongs to the orbit labeled with the pattern $15 + {LAYOUT}$ or ${apndji}$, and the word ${BOOKKEEPER}$ belongs to the orbit labeled $25 + {BOOKKEEPER}$ or ${annjjddodq}$.
\end{example}

As the letters are numbered, we may ``subtract" one letter from another. Note that the difference between any two letters is invariant with regard to shifting by an element $g \in G$, namely $a - b \equiv (a+g) - (b+g) \pmod{q}$. Thus, a word is uniquely determined by its first letter and the differences between each pair of consecutive letters; a pattern is uniquely determined by just the differences.

We formalize this idea as follows. Given a pattern $p = p(1)p(2) \dots p(\ell)$ of length $\ell \geq 2$, we define the \emph{adjacency signature} $S(p)$ to be a word of length $\ell-1$, consisting of the letters corresponding to the integers $\{s(1), s(2), \dots, s(\ell-1)\}$, where $s(i) \in \{0,1,\dots, q-1\}$ is the unique integer $k$ for which $k + p(i) = p(i+1)$. An integer in the set $\{s(1), s(2), \dots, s(\ell-1)\}$ corresponds to a unique letter in our alphabet, so we interchangeably use the corresponding letters to the word $S(p)$.

\begin{example}
The pattern $p = {apndji}$ has adjacency signature $S(p) = \{15,24,$ $16,6,25\}$, or ${PYQGZ}$.
\end{example}

Consider the set of words of length $n$ that avoid the pattern $p$. These words can be grouped into orbits since a word avoids $p$ if and only if all words in its orbit also avoid $p$. These orbits all have size $q$. Moreover, each orbit corresponds to a unique adjacency signature of length $n-1$, since two equivalent words (and thus all words in the orbit) have the same adjacency signature. This forms a $q$-to-1 bijection between words and adjacency signatures.

Now, as usual, let $\mathcal{A}(n,\{p\})$ denote the number of words of length $n$ which avoid the pattern $p$ (i.e., avoid all words in the orbit represented by $p$) and $A(n,\{S(p)\})$ denote the number of words which avoid the word $S(p)$. The following claim shows a bijective connection between the two sets.

\begin{theorem}
Let $n$ be a nonnegative integer, and $p$ a pattern of length $\ell$.
\begin{itemize}
    \item If $n=0$, we get $\mathcal{A}(n,\{p\}) = 1$ and $\mathcal{T}(n,\{p\})=0$.
    \item If $n=1$, we get $\mathcal{A}(n,\{p\}) = q$ when $\ell \geq 2$, and $0$ otherwise; $\mathcal{T}(n,\{p\})$ is $0$ when $\ell \geq 2$ and $q$ otherwise.
    \item If $n \geq 2$, we get $\mathcal{A}(n,\{p\}) = q A(n-1,\{S(p)\})$; similarly, we have $\mathcal{T}_p(n,\{p\}) = q T_{S(p)}(n-1, \{S(p)\})$. 
\end{itemize}

\end{theorem}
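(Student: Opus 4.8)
The plan is to establish the three cases by a direct bijection argument, leaning on the $q$-to-$1$ correspondence between words and adjacency signatures that was set up just before the statement. The cases $n=0$ and $n=1$ are boundary cases that I would dispatch first by hand: when $n=0$ the empty word trivially avoids $p$ and cannot end in an occurrence of $p$, giving $\mathcal{A}=1$ and $\mathcal{T}=0$; when $n=1$ a single letter cannot contain a pattern of length $\ell \geq 2$, so all $q$ single-letter words are counted by $\mathcal{A}$ and none by $\mathcal{T}$, while if $\ell=1$ the situation reverses (every single letter \emph{is} an occurrence). These require only unwinding the definitions of $\mathcal{A}(n,\{p\})$ and $\mathcal{T}_p(n,\{p\})$.

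The substance is the case $n \geq 2$. Here I would make precise the observation that a word $v$ of length $n$ contains the pattern $p$ (i.e. contains some word in the orbit $G \cdot p$ as a subword) \emph{if and only if} its adjacency signature $S(v)$, a word of length $n-1$, contains the word $S(p)$ of length $\ell-1$ as a subword. The key point is that membership in the orbit $G\cdot p$ is governed precisely by the consecutive differences: a length-$\ell$ subword $v(i,i+\ell-1)$ lies in $G\cdot p$ exactly when its sequence of $\ell-1$ consecutive differences equals $s(1)\dots s(\ell-1) = S(p)$, and this difference sequence is exactly the corresponding length-$(\ell-1)$ subword $S(v)(i,i+\ell-2)$ of the signature. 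I would verify both directions of this equivalence explicitly, since it is the engine of the whole argument: the differences of a subword of $v$ are a subword of the differences of $v$, and conversely any occurrence of $S(p)$ inside $S(v)$ lifts to an occurrence of a shift of $p$ inside $v$.

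Granting this equivalence, I would finish by invoking the $q$-to-$1$ bijection: the map sending a word to its adjacency signature collapses each orbit of size $q$ to a single signature, and since avoidance (and first-occurrence status) is constant on orbits, every signature of length $n-1$ avoiding $S(p)$ has exactly $q$ preimage words of length $n$ avoiding $p$. This immediately gives $\mathcal{A}(n,\{p\}) = q\,A(n-1,\{S(p)\})$. For the first-occurrence count I would observe that $v$ is a first-occurrence word for $p$ exactly when $v(1,n)$ ends in an occurrence of $p$ and contains no earlier occurrence; translating through the equivalence, this says $S(v)$ ends in an occurrence of $S(p)$ with no earlier occurrence, i.e. $S(v)$ is a first-occurrence word for $S(p)$. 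Again the $q$-to-$1$ correspondence yields $\mathcal{T}_p(n,\{p\}) = q\,T_{S(p)}(n-1,\{S(p)\})$.

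The main obstacle I anticipate is not any of the three counting identities individually but rather nailing down the subword-occurrence equivalence cleanly, particularly the boundary bookkeeping: a subword of $v$ of length $\ell$ corresponds to a subword of $S(v)$ of length $\ell-1$, so an occurrence ending at position $n$ in $v$ corresponds to an occurrence ending at position $n-1$ in $S(v)$, which is why the argument shifts the length parameter down by one and why the degenerate cases $n=0,1$ must be handled separately. I would take care to confirm that the ``ends in an occurrence'' condition survives this index shift correctly, since the final letter of $v$ participates in the last difference $s(\ell-1)$ but there is no difference recorded beyond the last letter; this alignment between ``last letter of the word'' and ``last letter of the signature'' is the one place where an off-by-one error could slip in.
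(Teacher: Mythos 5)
Your proposal is correct and follows essentially the same route as the paper: both rest on the key equivalence that a word avoids (or first-realizes) $p$ exactly when its adjacency signature avoids (or first-realizes) $S(p)$, combined with the $q$-to-$1$ orbit-to-signature correspondence. You simply spell out in more detail what the paper calls the ``key fact'' and the ``similar'' first-occurrence case, which is fine but not a different argument.
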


\begin{proof}
The first two statements are easily verifiable edge cases, so we focus on the last statement. We also focus on the avoiding function $\mathcal{A}$; the proof of the second part is similar. The key fact to note is that a word of length $n$ avoids a pattern $p$ if and only if its adjacency signature avoids $S(p)$.

Consider the aforementioned $q$-to-$1$ map from words to adjacency signatures, formed by grouping words into orbits of size $q$. The number of orbits whose words avoid $p$ is exactly the number of adjacency signatures that avoid $S(p)$. Since any word of length $n-1$ is a valid adjacency signature, the latter quantity is simply $A(n-1, \{S(p)\})$. 

Therefore, since each orbit has $q$ words, there are $qA(n-1, \{S(p)\})$ words that avoid $p$ as desired.
\end{proof}

Due to this theorem, any result that we have for words can be extended to patterns of a cyclic group. For instance, we have the following statement on generating functions.

\begin{corollary}
For any pattern $p$, we have the relations
\begin{itemize}
    \item $\mathcal{G}(z,\{p\}) = 1 + qzG(z,\{S(p)\})$; and
    \item $\mathcal{G}_p(z,\{p\}) = qzG_{S(p)}(z,\{S(p)\})$.
\end{itemize}
\end{corollary}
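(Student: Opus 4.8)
The plan is to derive these two generating-function identities directly from the $q$-to-$1$ correspondence established in the preceding theorem, treating each identity as a weighted sum of the coefficient-level relations $\mathcal{A}(n,\{p\}) = q\,A(n-1,\{S(p)\})$ and $\mathcal{T}_p(n,\{p\}) = q\,T_{S(p)}(n-1,\{S(p)\})$. The overall strategy is to expand each $\mathcal{G}$-function as its defining power series, substitute the coefficient relation in the range where it applies ($n \geq 2$), and handle the small-$n$ terms ($n = 0, 1$) separately as explicit constants. Since multiplying a generating function by $z$ corresponds to a shift of index (so that $z\,G(z,\{S(p)\}) = \sum_{n\geq 1} A(n-1,\{S(p)\})z^n$), the factor of $qz$ on the right-hand side is exactly what converts the word-counting coefficients $A(n-1,\cdot)$ into the pattern-counting coefficients $\mathcal{A}(n,\cdot)$.

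First I would write $\mathcal{G}(z,\{p\}) = \sum_{n=0}^{\infty} \mathcal{A}(n,\{p\})z^n$ and peel off the $n=0$ term. By the theorem, $\mathcal{A}(0,\{p\}) = 1$, which accounts for the additive constant $1$ in the identity $\mathcal{G}(z,\{p\}) = 1 + qz\,G(z,\{S(p)\})$. For the remaining terms with $n \geq 1$, I would substitute $\mathcal{A}(n,\{p\}) = q\,A(n-1,\{S(p)\})$; the theorem's $n \geq 2$ clause gives this directly, and I would check that the $n=1$ case is consistent, since $\mathcal{A}(1,\{p\}) = q = q\,A(0,\{S(p)\})$ because the empty word trivially avoids $S(p)$ (here I am using $\ell \geq 2$, which holds whenever $S(p)$ is defined). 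Re-indexing the sum via $m = n-1$ then yields $\sum_{n\geq 1} q\,A(n-1,\{S(p)\})z^n = qz\sum_{m\geq 0} A(m,\{S(p)\})z^m = qz\,G(z,\{S(p)\})$, completing the first identity.

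For the second identity, I would run the identical computation on $\mathcal{G}_p(z,\{p\}) = \sum_{n=0}^\infty \mathcal{T}_p(n,\{p\})z^n$. Here there is no leftover constant: the theorem gives $\mathcal{T}_p(0,\{p\}) = 0$, so the $n=0$ term vanishes and no additive $1$ appears. The $n=1$ term also contributes nothing since $\mathcal{T}_p(1,\{p\}) = 0$ for $\ell \geq 2$ (a length-$1$ word cannot end in a first occurrence of a pattern of length at least $2$), which matches $q\,T_{S(p)}(0,\{S(p)\}) = 0$. Substituting $\mathcal{T}_p(n,\{p\}) = q\,T_{S(p)}(n-1,\{S(p)\})$ for $n \geq 2$ and re-indexing gives $qz\,G_{S(p)}(z,\{S(p)\})$ exactly.

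The computation here is genuinely routine once the theorem is in hand, so there is no deep obstacle; the only point requiring care is the bookkeeping of the boundary terms $n=0$ and $n=1$, which is precisely where the two identities differ (one picks up the constant $1$, the other does not). I would make sure to verify that the $n=1$ values predicted by the theorem are compatible with the $n\geq 2$ coefficient relation under the convention that $A(0,\{S(p)\}) = 1$ and $T_{S(p)}(0,\{S(p)\}) = 0$, so that a single re-indexed sum cleanly captures all terms with $n \geq 1$. With that boundary check done, both identities follow immediately by matching power-series coefficients.
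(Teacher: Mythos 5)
Your proof is correct and is exactly the derivation the paper intends: the paper states this corollary as an immediate consequence of the preceding theorem, and your coefficient-by-coefficient substitution with the boundary terms $n=0,1$ handled separately (noting $\mathcal{A}(1,\{p\}) = q = q\,A(0,\{S(p)\})$ and $\mathcal{T}_p(0,\{p\}) = \mathcal{T}_p(1,\{p\}) = 0$) is precisely that consequence spelled out. No gaps; the bookkeeping of where the additive $1$ does and does not appear is handled correctly.
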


Actually, we can write the correlation for patterns explicitly in terms of the correlation of adjacency signatures.

\begin{proposition}
Let $p_1$ be a pattern with length $\ell$, and $p_2$ be another pattern. For $i \leq \ell - 2$, the entry $\mathcal{C}_i(p_1,p_2)$ is exactly $C_i(S(p_1), S(p_2))$. In addition, we always have $\mathcal{C}_{\ell-1}(p_1,p_2) = 1$.
\end{proposition}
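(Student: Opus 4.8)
The plan is to exploit the fact that, for the cyclic group $G = \mathbb{Z}_q$, every word has trivial stabilizer, so (as noted earlier in this section) the weight of any substring equals $1$. Consequently each entry $\mathcal{C}_i(p_1,p_2)$ is either $0$ or $1$: it equals $1$ precisely when the suffix of $p_1$ of length $\ell - i$ is equivalent to the prefix of $p_2$ of length $\ell - i$, and $0$ otherwise. The whole proposition therefore reduces to understanding this equivalence condition, which I would transfer to the adjacency signatures.

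The key input is the correspondence established above: two words are equivalent under the Caesar shift if and only if they have the same adjacency signature. First I would record how signatures behave under passing to substrings. Writing $S(p_1) = s(1)s(2)\cdots s(\ell-1)$ with $s(k) = p_1(k+1) - p_1(k) \pmod q$, the suffix $p_1(i+1,\ell)$, a word of length $\ell - i$, has adjacency signature $s(i+1)\cdots s(\ell-1)$, i.e. the suffix of $S(p_1)$ of length $\ell - 1 - i$. Symmetrically, the prefix $p_2(1,\ell-i)$ has adjacency signature equal to the prefix of $S(p_2)$ of length $\ell - 1 - i$. Hence, for $i \le \ell - 2$ — where both signatures are nonempty, of length $\ell - 1 - i \ge 1$ — the equivalence of the length-$(\ell-i)$ suffix of $p_1$ with the length-$(\ell-i)$ prefix of $p_2$ is equivalent to the equality of the length-$(\ell-1-i)$ suffix of $S(p_1)$ with the length-$(\ell-1-i)$ prefix of $S(p_2)$. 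Since $S(p_1)$ has length $\ell - 1$, this last equality is exactly the condition defining $C_i(S(p_1),S(p_2)) = 1$ for words, so $\mathcal{C}_i(p_1,p_2) = C_i(S(p_1),S(p_2))$, as claimed.

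It then remains to handle the boundary index $i = \ell - 1$ separately, since there the associated signatures are empty and the word-correlation comparison degenerates. For this case the suffix of $p_1$ of length $1$ and the prefix of $p_2$ of length $1$ are single letters, and because $\mathbb{Z}_q$ acts transitively on the $q$ letters these are always equivalent; their weight is $1$, giving $\mathcal{C}_{\ell-1}(p_1,p_2) = 1$ unconditionally. The only real care needed anywhere is the index bookkeeping: the signature of a length-$m$ word has length $m-1$, so every ``length $\ell - i$'' comparison on words becomes a ``length $\ell - 1 - i$'' comparison on signatures, and it is precisely this shift that forces the $i = \ell - 1$ case out of the general argument and into the transitivity observation. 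This off-by-one matching is the main (though routine) obstacle; once it is set up correctly the two claims fall out immediately.
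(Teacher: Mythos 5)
Your proof is correct and follows essentially the same route as the paper's: trivial stabilizers under $\mathbb{Z}_q$ force each entry $\mathcal{C}_i(p_1,p_2)$ to be $0$ or $1$, equivalence of subwords is detected by equality of adjacency signatures (with the length-$(\ell-i)$ comparison on words becoming a length-$(\ell-1-i)$ comparison on signatures), and transitivity of the action handles $i = \ell-1$. If anything, your index bookkeeping is spelled out more carefully than in the paper, which states the overlap condition with the roles of prefix and suffix inadvertently swapped.
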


\begin{proof}
Note first that each entry $\mathcal{C}_i(p_1,p_2)$ is either $0$ or $1$, since the stabilizer of every word is just $1$. In addition, the correlation entry is $0$ or $1$ depending on whether or not $p_1(1,\ell-i)$ is equivalent to $p_2(i+1,\ell)$. This is exactly whether or not $S(p_1(1,\ell-i)) = S(p_1)(1,\ell-i-1)$ is equal to $S(p_2(i+1,\ell)) = S(p_2)(i+1,\ell-1)$. The first statement of proposition quickly follows.

The second statement is true since any two letters are equivalent, so the last entry of any correlation is $1$.
\end{proof}

Using this comparison, we may express the Conway leading number for patterns in terms of the CLN of their adjacency signatures.

\begin{corollary}
The Conway leading number between a pattern $p$ of length $\ell$ and another pattern $p'$ is 
\[p \mathcal{L} p' = 1 + qS(p)\L S(p').\]
In particular, the expected wait time of $p$ is equal to $1 + qS(p) \L S(p)$.
\end{corollary}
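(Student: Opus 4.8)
The plan is to compute $p\mathcal{L}p'$ directly from its definition and collapse it using the preceding proposition. Recall that the Conway leading number for patterns unwinds to a base-$q$ evaluation of the correlation vector, so
\[p\mathcal{L}p' = q^{\ell-1}\mathcal{C}_{p,p'}\!\left(\tfrac1q\right) = \sum_{i=0}^{\ell-1}\mathcal{C}_i(p,p')\,q^{\ell-1-i}.\]
The first step is to peel off the top-index term $i=\ell-1$. By the preceding proposition its coefficient $\mathcal{C}_{\ell-1}(p,p')$ equals $1$, and since $q^{\ell-1-(\ell-1)}=q^0=1$, this term contributes exactly $1$ to the sum.

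For the remaining indices $0\le i\le\ell-2$, I would substitute the proposition's identity $\mathcal{C}_i(p,p')=C_i(S(p),S(p'))$, leaving the residual sum $\sum_{i=0}^{\ell-2}C_i(S(p),S(p'))\,q^{\ell-1-i}$. The key observation is that $S(p)$ has length $\ell-1$, so its own Conway leading number carries the leading factor $q^{(\ell-1)-1}=q^{\ell-2}$; explicitly $S(p)\L S(p')=\sum_{i=0}^{\ell-2}C_i(S(p),S(p'))\,q^{\ell-2-i}$. Multiplying this by $q$ shifts every exponent up by one and reproduces precisely the residual sum, so the residual sum equals $q\cdot S(p)\L S(p')$. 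Combining the two pieces yields $p\mathcal{L}p'=1+q\,S(p)\L S(p')$, as claimed.

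The only thing to watch is the index bookkeeping: the one-letter length drop from $p$ to $S(p)$ is exactly what converts the $q^{\ell-1}$ normalization of the pattern CLN into $q\cdot q^{\ell-2}$ for the signature CLN. There is no genuine obstacle here, as the statement is a formal consequence of the proposition together with the CLN definitions. For the expected-wait-time consequence, I would invoke the formula $\frac{q}{r}\,p\mathcal{L}p$ from Section~\ref{sec:ewt} together with the fact, established at the start of this section, that every orbit under the cyclic action $\mathbb{Z}_q$ has size exactly $r=q$. Then $\frac{q}{r}=1$, so the expected wait time equals $p\mathcal{L}p$, which by the main identity is $1+q\,S(p)\L S(p)$.
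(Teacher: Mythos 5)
Your proof is correct and is essentially the paper's own (implicit) derivation: the paper states this corollary as an immediate consequence of the preceding proposition, and your term-by-term substitution of $\mathcal{C}_i(p,p') = C_i(S(p),S(p'))$ for $i \le \ell-2$, peeling off $\mathcal{C}_{\ell-1} = 1$, and reindexing against the length-$(\ell-1)$ normalization of $S(p)\L S(p')$ is exactly that computation. Your justification of the wait-time claim, combining the formula $\frac{q}{r}\,p\mathcal{L}p$ from Section~\ref{sec:ewt} with the fact that every orbit under $\mathbb{Z}_q$ has size $r = q$, is also the intended argument.
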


The expected wait time result is not surprising, since in our random output, every letter starting from the second letter adjoins a letter to the output's adjacency signature. So in terms of the expected wait time, generating a random output under the group action is equivalent to generating a random word with one less letter in the original game.

Finally, all strategies for the original Penney's game carry over in their entirety. 

\begin{corollary}
If Alice picks a pattern with signature $S(p_1) = s_1(1)s_1(2) \dots s_1(\ell-1)$, then Bob's best strategy is to pick a pattern $p_2$ whose adjacency signature is of the form $S(p_2) = s^*s_1(1)s_1(2)\dots s_1(\ell-2)$; namely, Bob picks a $p'$ for which $S(p'(2,\ell)) = S(p(1,\ell-1))$. This is a winning strategy.
\end{corollary}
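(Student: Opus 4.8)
The plan is to leverage the equivalence, already made precise in this section, between the pattern game under the cyclic group and the ordinary Penney's game played on adjacency signatures that are one letter shorter. The map $p \mapsto S(p)$ is a bijection between patterns of length $\ell$ and words of length $\ell-1$ (a pattern is determined by its consecutive differences, and every word of length $\ell-1$ arises as some $S(p)$), so choosing Bob's pattern $p_2$ amounts to choosing the word $S(p_2)$. I would first show that under this bijection the winning odds coincide exactly, and then transport the word-game theorem from Section~\ref{sec:originalproblem}.

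For the odds computation, I would start from the pattern odds formula of Section~\ref{sec:odds}, namely $\frac{r_1}{r_2}\cdot \frac{p_2\mathcal{L}p_2 - p_2\mathcal{L}p_1}{p_1\mathcal{L}p_1 - p_1\mathcal{L}p_2}$. Under $\mathbb{Z}_q$ every orbit has size $q$, so $r_1 = r_2 = q$ and the prefactor is $1$. Applying the identity $p\mathcal{L}p' = 1 + q\,S(p)\L S(p')$ to each of the four terms, the additive $1$'s cancel inside each difference and a common factor of $q$ cancels between numerator and denominator, leaving
\[\frac{S(p_2)\L S(p_2) - S(p_2)\L S(p_1)}{S(p_1)\L S(p_1) - S(p_1)\L S(p_2)},\]
which is precisely Conway's formula for the probability that the word $S(p_1)$ beats the word $S(p_2)$ in the classical game. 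Hence Alice's pattern $p_1$ beats Bob's pattern $p_2$ with exactly the odds that $S(p_1)$ beats $S(p_2)$.

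With the odds identified, the result is immediate: maximizing Bob's pattern odds is the same as maximizing his word odds on the length-$(\ell-1)$ signatures. The classical theorem of Section~\ref{sec:originalproblem} says the optimal beater of the word $S(p_1) = s_1(1)\dots s_1(\ell-1)$ is a word $w$ with $w(2,\ell-1) = S(p_1)(1,\ell-2)$, i.e.\ $w = s^* s_1(1)\dots s_1(\ell-2)$, and that it wins with odds exceeding $1$. Translating back through the bijection, Bob's best pattern $p_2$ is the one with $S(p_2) = s^* s_1(1)\dots s_1(\ell-2)$, equivalently $S(p_2(2,\ell)) = S(p_1(1,\ell-1))$, and it is winning.

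I expect no serious obstacle here; the content is really careful bookkeeping rather than a new idea. The one point requiring attention is matching lengths: the classical theorem is stated for two equal-length words, so I must confirm that both $S(p_1)$ and $S(p_2)$ have length $\ell-1$ (forcing $p_2$ to have length $\ell$), and that the index shift in ``$w(2,\ell-1) = S(p_1)(1,\ell-2)$'' is the correct translation of the displayed signature condition. The optimality claim (not merely that the strategy wins) rests on $S$ being a bijection on the relevant pattern and word sets, so that ranging over all candidate patterns $p_2$ corresponds exactly to ranging over all candidate words.
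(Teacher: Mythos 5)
Your proposal is correct and is essentially the paper's intended argument: the paper states this corollary without proof as an immediate consequence of the signature bijection and the identity $p\mathcal{L}p' = 1 + q\,S(p)\L S(p')$, which is exactly the transport you carry out (with $r_1 = r_2 = q$ cancelling and the $+1$'s and factor of $q$ cancelling in Conway's formula). Your write-up merely makes explicit the bookkeeping the paper leaves implicit.
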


Specifically, the game is still non-transitive; Bob always has a winning strategy. In particular, if Alice picks the patterns ${aaab}$, ${abbb}$, ${abcc}$, ${aabc}$, then Bob picks the patterns ${abbb}$, ${abcc}$, ${aabc}$, ${aaab}$ to have a higher chance of winning, respectively. These patterns have adjacency signatures matching our first non-transitive example from Section~\ref{sec:originalproblem}.

\section{Symmetric group}\label{sec:symmetric}

We now look at the group action generated by the symmetric group $G = S_q$, where each element permutes letters.

For example, for $q=2$, the unique orbits are $\{AA,BB\}$ and $\{AB,BA\}$, represented by patterns $aa$ and $ab$. This case is covered in Section~\ref{sec:cyclic} as $S_2 \cong \mathbb{Z}_2$. 

For $q = 3$, the orbits are
\begin{itemize}
    \item $[AAA] = \{AAA,BBB,CCC\}$, represented by $aaa$;
    \item $[AAB] = \{AAB, AAC, BBA, BBC, CCA, CCB\}$, represented by $aab$;
    \item $[ABB] = \{ABB,ACC,BAA,BCC,CAA,CBB\}$, represented by $abb$;
    \item $[ABA] = \{ABA, ACA, BAB, BCB, CAC, CBC\}$, represented by $aba$; and
    \item $[ABC] = \{ABC, ACB, BAC, BCA, CAB, CBA\}$, represented by $abc$.
\end{itemize}

\subsection{Generating functions}

Recall in Section~\ref{sec:GroupActionAndCorrelation}, we show that the entry $\mathcal{C}_i$ of the autocorrelation $\mathcal{C}(p,p)$ of a pattern is either $0$ or $j!/k!$, where $j$ is the number of unused letters in $p$ and $k$ is the number of unused letters in $p(1,\ell-i)$. Equipped with this, we may compute a few example generating functions.

\begin{example}
Let $\mathcal{A} = \{a_1,a_2,\dots,a_q\}$. Consider the pattern $a_1 a_2 \dots a_q$, or the orbit consisting of words of length $q$ with all letters distinct. First, note that there are $q!$ elements in this orbit. In addition, we note
$$\mathcal{C}(a_1 a_2 \dots a_q, a_1 a_2 \dots a_q) = (0!, 1!, \dots, (q-1)!).$$
Thus, the generating functions for this pattern is
$$\mathcal{G}(z) = \frac{0! + 1!z + \dots + (q-1)!z^{q-1}}{q!z^{q} + (1-qz)(0! + 1!z + \dots + q!z^{q-1})},$$
$$\mathcal{G}_p(z) = \frac{q!z^{q}}{q!z^{q} + (1-qz)(0! + 1!z + \dots + q!z^{q-1})}.$$
\end{example}

\begin{example}
For $q=3$, we get $\mathcal{C}(abc,abc) = (1,1,2)$, from which it follows that
\begin{align*}
\mathcal{G}(z) &= \frac{1+z+2z^2}{6z^3 + (1-3z)(1+z+2z^2)} = \frac{1+z+2z^2}{1-2z-z^2} \\
&= 1 + 3 z + 9 z^2 + 21 z^3 + 51 z^4 + 123 z^5 + 297 z^6 + \cdots; \\
\mathcal{G}_p(z) &= \frac{6z^3}{6z^3 + (1-3z)(1+z+2z^2)} = \frac{6z^3}{1-2z-z^2} \\
&= 6 z^3 + 12 z^4 + 30 z^5 + 72 z^6 + 174 z^7 + 420 z^8 + \cdots.
\end{align*}
\end{example}

\subsection{Lower bound on the CLN}

We may also use these results to strengthen the lower bound of a pattern's CLN, derived in Section~\ref{sec:GroupActionAndCorrelation}, from within the interval $[q^{\ell-1}+1, q^{\ell-1} +q-1]$ to an exact bound.

\begin{proposition}
Fixing the group $G = S_q$ to form our group action, the least possible CLN for a pattern $p$ of length $\ell$ is exactly $q^{\ell-1} + q - 1$.
\end{proposition}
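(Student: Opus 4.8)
The plan is to establish the matching lower bound, since the upper bound $q^{\ell-1}+q-1$ is already furnished by the pattern $aa\dots ab$ in the proof of the bounds theorem of Section~\ref{sec:GroupActionAndCorrelation}. Thus it remains to show that \emph{every} pattern $p$ of length $\ell$ satisfies $p\mathcal{L}p \geq q^{\ell-1}+q-1$. Expanding the Conway leading number and using $\mathcal{C}_0(p,p)=1$ together with the nonnegativity of all entries, I would write
\[
p\mathcal{L}p = \sum_{i=0}^{\ell-1}\mathcal{C}_i(p,p)\,q^{\ell-1-i} = q^{\ell-1} + \sum_{i=1}^{\ell-1}\mathcal{C}_i(p,p)\,q^{\ell-1-i},
\]
so the task reduces to proving $\sum_{i=1}^{\ell-1}\mathcal{C}_i(p,p)\,q^{\ell-1-i} \geq q-1$.

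First I would split on the number $m$ of distinct letters appearing in $p$. When $m \geq 2$, I isolate the last coefficient $\mathcal{C}_{\ell-1}(p,p)$, which is nonzero because $S_q$ is transitive, so the length-one prefix and suffix are always equivalent. By the weight formula for $S_q$ recalled in Section~\ref{sec:GroupActionAndCorrelation}, this coefficient equals $(q-1)!/(q-m)! = (q-1)(q-2)\cdots(q-m+1)$, a product of $m-1 \geq 1$ positive integer factors whose largest is $q-1$; hence $\mathcal{C}_{\ell-1}(p,p) \geq q-1$. Since the remaining terms of the sum are nonnegative, this alone yields $p\mathcal{L}p \geq q^{\ell-1}+q-1$, with equality forcing $m=2$ and all intermediate coefficients to vanish, i.e.\ an almost-non-self-overlapping two-letter pattern, which is exactly the family $aa\dots ab$.

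The remaining case is $m=1$, i.e.\ $p = aa\dots a$. Here every suffix is a power of a single letter, so each $\mathcal{C}_i(p,p)=1$ and the sum telescopes to $q^{\ell-2}+\dots+q+1 = (q^{\ell-1}-1)/(q-1)$, which exceeds $q-1$ precisely when $\ell \geq 3$. I expect this degenerate constant pattern to be the main subtlety: for $\ell = 2$ with $q \geq 3$ the pattern $aa$ in fact has CLN $q+1 < q^{\ell-1}+q-1$, so the statement tacitly requires $\ell \geq 3$, and I would either impose that hypothesis or dispose of $\ell = 2$ by direct inspection. Combining the two cases gives the lower bound, which together with the known upper bound forces equality and completes the proof.
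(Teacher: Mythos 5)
Your proof is correct and follows essentially the same route as the paper: both arguments split on whether $p$ uses one distinct letter or at least two, observing that in the latter case $\mathcal{C}_{\ell-1}(p,p) = (q-1)!/|G_p| \geq q-1$, handling the constant pattern $aa\dots a$ by summing its full correlation vector, and getting achievability from $aa\dots ab$. Two small points of comparison. First, your caveat about $\ell = 2$ is a genuine catch: for $q \geq 3$ the pattern $aa$ has CLN $q+1 < 2q-1$, so the proposition does tacitly require $\ell \geq 3$; the paper's own proof asserts $q^{\ell-1} + \dots + q + 1 \geq q^{\ell-1} + q - 1$ ``for $q \geq 2$,'' which silently presumes $\ell \geq 3$, so you are being more careful than the source here. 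Second, your parenthetical equality characterization is inaccurate: equality does not single out the family $aa\dots ab$, since any almost-non-self-overlapping pattern with exactly two distinct letters attains the bound (e.g.\ $aabab$ and $abaaa$ for $(q,\ell) = (4,5)$, as the paper itself notes), and for $q = 3$ even patterns with three distinct letters such as $aabc$ do. Since that characterization is not needed for the lower bound or for achievability, it does not affect the validity of your argument.
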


\begin{proof}
We showed in Section~\ref{sec:GroupActionAndCorrelation} that the minimum is between $q^{\ell-1} + 1$ and $q^{\ell-1} + q - 1$, so it suffices to show $p \mathcal{L} p \geq q^{\ell-1} + q - 1$ for all $p$. Since every letter of $\mathcal{A}$ is within a single orbit, the last entry $\mathcal{C}_{\ell-1}$ of the autocorrelation vector must be nonzero, so it is equal to $|G_{p(1)}|/|G_p| = (q-1)!/|G_p|$, which is either equal to $1$ or at least $q-1$.

Suppose $\mathcal{C}_{\ell-1} = 1$, meaning that the stabilizer of the last letter of $p$ has the same order as the stabilizer of $p$. This can only mean that $p = aa\dots a$ for some letter $a \in \mathcal{A}$, which has CLN $q^{\ell-1} + \dots + q + 1 \geq q^{\ell-1} + q - 1$ for $q \geq 2$. On the other hand, if $\mathcal{C}_{\ell-1} \neq 1$, then $\mathcal{C}_{\ell-1}$ is at least $q-1$ and the CLN is at least $q^{\ell-1} + q - 1$. 

To show achievability, note the pattern $p = aa\dots ab$ works. The bound is therefore sharp, and the claim follows.
\end{proof}

We also partially characterize which patterns achieve this lower bound with the following corollary.

\begin{corollary}
For $q \geq 4$, the lower bound is only achieved by words with two distinct letters.
\end{corollary}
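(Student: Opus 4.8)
The plan is to determine exactly which patterns can make the CLN equal to the minimum $q^{\ell-1}+q-1$ already established in the preceding proposition. First I would write the CLN in its base-$q$ expansion,
\[
p\mathcal{L}p = q^{\ell-1} + \sum_{i=1}^{\ell-1}\mathcal{C}_i(p,p)\,q^{\ell-1-i},
\]
using $\mathcal{C}_0(p,p)=1$. The condition for $p$ to attain the minimum is then exactly $\sum_{i=1}^{\ell-1}\mathcal{C}_i(p,p)\,q^{\ell-1-i} = q-1$.

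Next I would argue that, since each $\mathcal{C}_i(p,p)$ is a nonnegative integer, this equation forces a very rigid shape on the autocorrelation vector. For any $i \le \ell-2$ the exponent satisfies $\ell-1-i \ge 1$, so a single nonzero coefficient in that range would already contribute at least $q > q-1$ to the sum. Hence $\mathcal{C}_i(p,p)=0$ for all $1 \le i \le \ell-2$, and all the remaining mass must sit on the final coordinate, giving $\mathcal{C}_{\ell-1}(p,p) = q-1$.

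The crux is to translate this value into a count of distinct letters. Because $G = S_q$, the last coordinate is the weight of the one-letter suffix $p(\ell)$ in $p$, which equals $|G_{p(\ell)}|/|G_p| = (q-1)!/|G_p|$; and if $p$ uses exactly $m$ distinct letters then $|G_p| = (q-m)!$, so $\mathcal{C}_{\ell-1}(p,p) = (q-1)(q-2)\cdots(q-m+1)$, a product of $m-1$ consecutive factors. Setting this equal to $q-1$ and invoking $q \ge 4$: the case $m=1$ yields the empty product $1 \ne q-1$, the case $m=2$ yields exactly $q-1$, and any $m \ge 3$ yields at least $(q-1)(q-2) > q-1$. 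Thus attaining the minimum forces $m=2$, which is precisely the claim; attainment itself already follows from the pattern $aa\cdots ab$ in the preceding proposition.

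The only step requiring genuine care is the identification $\mathcal{C}_{\ell-1}(p,p) = (q-1)!/(q-m)!$ through the $S_q$ stabilizer formula, together with the observation that the hypothesis $q \ge 4$ is exactly what rules out the numerical coincidence $(q-1)(q-2) = q-1$ occurring at $q = 3$. At $q=3$ a three-letter pattern such as $abcc\cdots c$ attains the bound, so the restriction $q \ge 4$ is essential and not merely cosmetic; everything else is a short base-$q$ comparison.
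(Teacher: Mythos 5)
Your proof is correct and takes essentially the same approach as the paper: both arguments hinge on identifying the last autocorrelation entry as $\mathcal{C}_{\ell-1}(p,p) = (q-1)!/|G_p| = (q-1)(q-2)\cdots(q-m+1)$ for a pattern with $m$ distinct letters, and observing that for $q \geq 4$ this exceeds $q-1$ as soon as $m \geq 3$ (while $m=1$ gives $1$), so only $m=2$ survives. Your extra step pinning down the full autocorrelation vector of a minimizer ($\mathcal{C}_i = 0$ for $1 \leq i \leq \ell-2$ and $\mathcal{C}_{\ell-1} = q-1$ exactly) is sound but slightly more than the paper needs, since the inequality $\mathcal{C}_{\ell-1} > q-1$ alone already rules out $m \geq 3$.
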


\begin{proof}
If $p$ only contains a single distinct letter, then it obviously it doesn't . Now suppose that $p$ contains at least three distinct letters. Then $|G_p| \leq (q-3)!$ and $\mathcal{C}_{\ell-1} \geq (q-1)(q-2) > (q-1)$, implying $p\mathcal{L} p > q^{\ell-1} + q - 1$. Therefore, to achieve the lower bound, we must use exactly two distinct letters.
\end{proof}

\begin{remark*}
Like with non-overlapping words, it seems hard to exactly characterize patterns with CLN $q^{\ell-1} + q - 1$. In particular, for $(q,\ell) = (4,5)$, the lower bound is achieved by the patterns $aaaab$, $aaaba$, $aabab$, $abaaa$, $ababb$, and $abbbb$.
\end{remark*}

\begin{remark*}
The corollary also holds for $q=2$. But the case $q=3$ is special; there are patterns with three distinct letters that achieve the minimal CLN. For instance, with $(q,\ell) = (3,4)$, the following patterns have a CLN of $3^{4-1} + 3-1 = 29$: $aaab, aaba, aabc, abaa, abbb, abcc$.
\end{remark*}

\subsection{Odds}

Just like the original Penney's game, even if the wait time of $p_2$ exceeds the wait time of $p_1$, Bob may still win by picking $p_2$ if Alice picks $p_1$.

\begin{example}
For $(q, \ell) = (4,4)$, the expected wait time for the patterns $p_1 = aabc$ and $p_2 = abbc$ are $\frac4{24} \cdot p_1\mathcal{L} p_1 = \frac{35}{3}$ and $\frac4{24} \cdot p_2\mathcal{L} p_2 = 26$, respectively. However, in a randomly generated string of letters, $p_2$ appears before $p_1$ with odds
\[\frac{24}{24} \cdot \frac{p_1\mathcal{L} p_1 - p_1\mathcal{L} p_2}{p_2\mathcal{L} p_2 - p_2\mathcal{L} p_1} = \frac{7}{5}.\]
\end{example}
	
In Figure~\ref{fig:44-graph} we let $(q, \ell) = (4,4)$ and show every pattern $p$ with its best beater $p'$, denoted as $p \to p'$. We label each arrow with the odds of the second pattern winning. The data for this graph was generated with a program, found here: \url{https://github.com/seanjli/penneys-game-patterns}.

\begin{figure}[ht!]
    \centering
    \includegraphics[width=10cm]{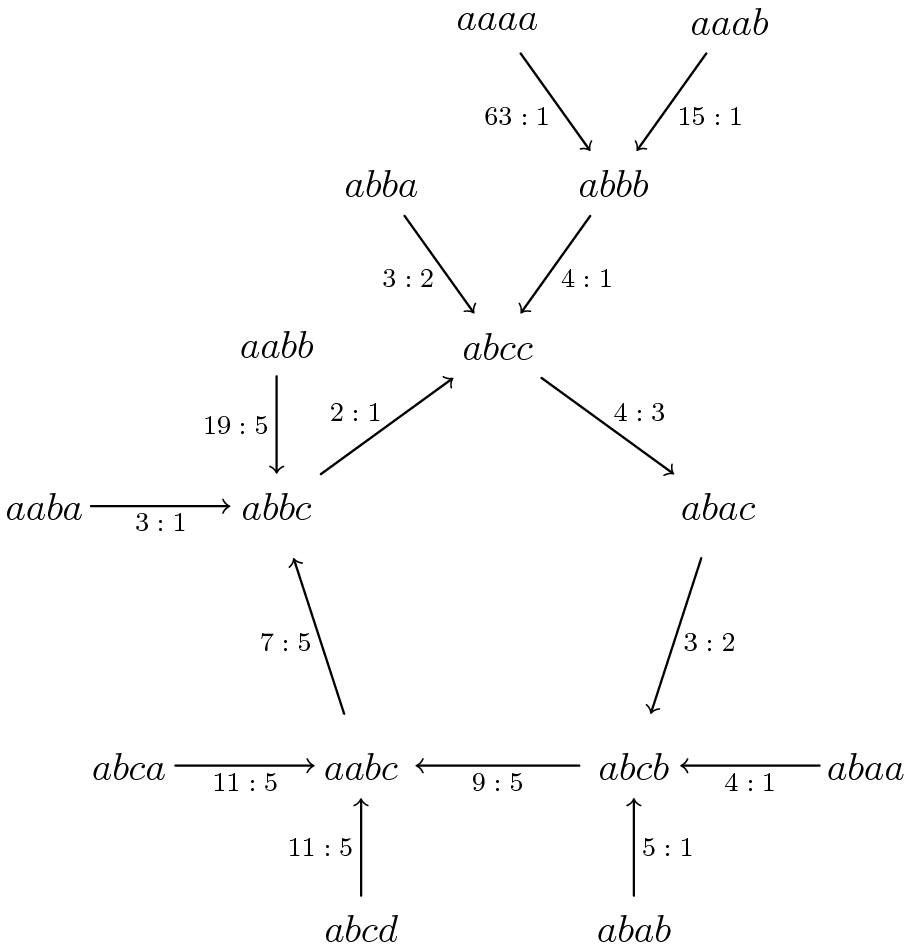}
    \caption{Directed graph of Bob's best choices for $(q, \ell) = (4,4)$.}
    \label{fig:44-graph}
\end{figure}

Note that for this choice of $q$ and $\ell$, the game is  non-transitive. Namely, in Figure~\ref{fig:44-graph}, we have a non-transitive cycle of length $5$:
\[aabc \ \stackrel{7:5}{\to}\ abbc\ \stackrel{2:1}{\to}\ abcc\ \stackrel{4:3}{\to}\ abac\ \stackrel{3:2}{\to}\ abcb\ \stackrel{9:5}{\to}\ aabc.\]

Unlike the original Penney's game, in the game with patterns, Alice can sometimes win. Suppose Alice picks the pattern $p_1$ and Bob picks $p_2$.

\begin{proposition}
For $\ell < (q+2)/2$, the pattern $p_1 = a_1 a_2 \dots a_\ell$ (i.e. a pattern with $\ell$ different letters) has better odds against any other pattern of the same length.
\end{proposition}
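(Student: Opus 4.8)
The plan is to show that when Alice picks the all-distinct pattern $p_1 = a_1 a_2 \dots a_\ell$, her odds of winning against any other pattern $p_2$ of the same length $\ell$ exceed $1$. By the odds formula, Alice wins with odds
\[\frac{r_1}{r_2}\cdot \frac{p_2\mathcal{L}p_2 - p_2\mathcal{L}p_1}{p_1\mathcal{L}p_1 - p_1\mathcal{L}p_2},\]
so it suffices to prove this quantity is $>1$, equivalently $r_1(p_2\mathcal{L}p_2 - p_2\mathcal{L}p_1) > r_2(p_1\mathcal{L}p_1 - p_1\mathcal{L}p_2)$. The key structural observation is that for $p_1$ with $\ell$ distinct letters, the orbit size is $r_1 = q!/(q-\ell)! = |G|/|G_{p_1}|$, which is the largest possible orbit size for any length-$\ell$ pattern; hence $r_1 \geq r_2$. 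Moreover $p_1$ is pangramic in the sense that it uses the maximum number of letters, so its stabilizer $|G_{p_1}| = (q-\ell)!$ is as small as possible.

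The main computational heart is to control the cross-correlations $p_1 \mathcal{L} p_2$ and $p_2 \mathcal{L} p_1$ using the hypothesis $\ell < (q+2)/2$, i.e. $2\ell < q+2$, equivalently $2\ell - 2 < q$. First I would show that $p_1$ is non-self-overlapping, so $\mathcal{C}_i(p_1,p_1) = 0$ for $1 \leq i \leq \ell-1$ and $p_1 \mathcal{L} p_1 = q^{\ell-1}$ exactly; this follows because a proper prefix and suffix of $p_1$, both consisting of distinct letters, can only be equivalent under $S_q$ if they have the same length, and any two equal-length strings of distinct letters \emph{are} equivalent — so I must be careful here and instead argue via the weight: the suffix $a_{i+1}\dots a_\ell$ of length $\ell-i$ is always equivalent to the prefix $a_1\dots a_{\ell-i}$, but its \emph{weight} in $p_1$ is $|G_{a_{i+1}\dots a_\ell}|/|G_{p_1}| = (q-(\ell-i))!/(q-\ell)!$, which is generally $>0$. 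So $p_1$ is \emph{not} non-self-overlapping as a pattern. I therefore recompute: $\mathcal{C}_i(p_1,p_1) = (q-\ell+i)!/(q-\ell)!$ for all $i$, giving $p_1\mathcal{L}p_1 = \sum_{i=0}^{\ell-1} \frac{(q-\ell+i)!}{(q-\ell)!} q^{\ell-1-i}$, with the leading term $q^{\ell-1}$ dominating.

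The crucial step is bounding the off-diagonal terms. For $p_1 \mathcal{L} p_2$, the entry $\mathcal{C}_i(p_1,p_2)$ is nonzero only when the suffix of $p_1$ of length $\ell-i$ is equivalent to the prefix of $p_2$ of length $\ell-i$; since that suffix of $p_1$ has all distinct letters, the prefix of $p_2$ must also have all distinct letters for equivalence. Using the condition $2\ell < q+2$, I would argue that the weight contributions of these cross terms, when subtracted in the numerator $p_1\mathcal{L}p_1 - p_1\mathcal{L}p_2$, cannot reach down to the leading order, because any overlap forces enough repeated or constrained letters to shrink the relevant stabilizers. The plan is then to compare leading coefficients: the denominator of Alice's odds is $p_1\mathcal{L}p_1 - p_1\mathcal{L}p_2$, which has leading term $q^{\ell-1}$ (as $\mathcal{C}_0(p_1,p_2) = 1$ iff $p_2 \sim p_1$, but $p_2 \neq p_1$ forces $\mathcal{C}_0 = 0$ unless $p_2$ is also all-distinct — in which case $p_2 \sim p_1$, contradiction), so $p_1\mathcal{L}p_2 < q^{\ell-1}$ and the denominator is a positive quantity of order $q^{\ell-1}$. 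Symmetrically $p_2\mathcal{L}p_2 - p_2\mathcal{L}p_1$ is positive and I must show the inequality holds after the $r_1/r_2 \geq 1$ factor is applied.

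The hard part will be making the overlap-counting argument rigorous: precisely, showing that the hypothesis $2\ell < q+2$ guarantees that no proper suffix-prefix overlap between $p_1$ and any competing $p_2$ can be ``large enough'' to overturn the dominance of the $r_1/r_2$ factor combined with the leading $q^{\ell-1}$ terms. I expect to handle this by splitting into the two regimes of the odds formula and bounding each cross-CLN by a geometric-type sum strictly below $q^{\ell-1}$, using Lemma~\ref{lem:all-letters} and Corollary on periods to rule out short periods of $p_1$ — since $\ell < (q+2)/2$ means $\ell$ is far below the period threshold $\ell/(q+2)+1$, forcing $p_1$ to have no small nontrivial structure. The remaining verification is a finite comparison of polynomial leading terms in $q$, which I would present as a clean inequality rather than grinding through every coefficient.
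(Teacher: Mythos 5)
Your setup coincides with the paper's: the odds formula, the computation $\mathcal{C}_i(p_1,p_1) = (q-\ell+i)!/(q-\ell)!$ (including your correct self-correction that $p_1$ is nowhere near non-self-overlapping --- under $S_q$ it has \emph{every} period), and the observation $\mathcal{C}_0(p_1,p_2)=0$. But the quantitative core is missing, and the plan you sketch for it fails at the decisive point. You propose to rely on $r_1/r_2 \geq 1$ and then show the CLN ratio $(p_2\mathcal{L}p_2 - p_2\mathcal{L}p_1)/(p_1\mathcal{L}p_1 - p_1\mathcal{L}p_2)$ exceeds $1$. It need not: for $(q,\ell)=(6,3)$, $p_1 = abc$, $p_2 = aab$, one computes $p_1\mathcal{L}p_1 = 80$, $p_1\mathcal{L}p_2 = 20$, $p_2\mathcal{L}p_2 = 41$, $p_2\mathcal{L}p_1 = 11$, so this ratio is $30/60 = 1/2 < 1$; Alice wins only because $r_1/r_2 = 120/30 = 4$. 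What the paper actually uses is the quantified form of your orbit-size remark: since $p_2$ cannot use $\ell$ distinct letters (else $p_2 \sim p_1$), its stabilizer satisfies $|G_{p_2}| \geq (q-\ell+1)!$, hence $r_1/r_2 = |G_{p_2}|/|G_{p_1}| \geq q-\ell+1$. In the paper's normalized bookkeeping this appears as the leading-coefficient bound $\tfrac{|G|}{r_2}p_2\mathcal{L}p_2 \geq (q-\ell+1)!\,q^{\ell-1} + (q-1)!$, set against $\tfrac{|G|}{r_1}p_1\mathcal{L}p_1 = \sum_{i=0}^{\ell-1}(q-\ell+i)!\,q^{\ell-1-i}$ and the coefficient-wise bound $\tfrac{|G|}{r_2}p_2\mathcal{L}p_1 \leq \sum_{i=1}^{\ell-1}(q-\ell+i)!\,q^{\ell-1-i}$, while $p_1\mathcal{L}p_2$ is simply discarded via $p_1\mathcal{L}p_2 \geq 0$ (it carries a favorable sign). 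The arithmetic then caps Bob's odds at $(\ell-1)/(q-\ell+1)$, and this final comparison is where --- and only where --- the hypothesis $\ell < (q+2)/2$ enters. Without the factor $q-\ell+1$, no bound of this shape can close.

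Separately, your plan to control the cross terms ``using Lemma~\ref{lem:all-letters} and the corollary on periods to rule out short periods of $p_1$'' cannot work. Those results concern \emph{long} patterns (the subtraction lemma needs $\ell \geq (q+1)s+t$, impossible when $\ell < (q+2)/2$ and $s,t\geq 1$), and, more fundamentally, $p_1$ has every period: every suffix of an all-distinct string is $S_q$-equivalent to the same-length prefix, exactly as your own formula for $\mathcal{C}_i(p_1,p_1)$ shows, so there are no short periods of $p_1$ to exclude. The mechanism that actually tames $p_2\mathcal{L}p_1$ is not period exclusion but stabilizer size: each nonzero $\mathcal{C}_i(p_2,p_1)$ equals $(q-\ell+i)!/|G_{p_2}|$, small precisely because Alice's suffixes use many distinct letters. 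Your sketch assembles the right raw materials but not the inequality that makes them fit together.
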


\begin{proof}
Recall that the odds that Bob wins are exactly
\[\frac{\frac{|G|}{r_1}p_1\mathcal{L}p_1 - \frac{|G|}{r_1}p_1\mathcal{L}p_2}{\frac{|G|}{r_2}p_2\mathcal{L}p_2 - \frac{|G|}{r_2}p_2\mathcal{L}p_1}.\]

We replace $\frac{|G|}{r_a}\mathcal{C}_i(p_a,p_b) = |G|\mathcal{C}^*_i(p_a,p_b)$ for sake of brevity. We scale the numerator and denominator by $|G|$: for example,
\[\frac{|G|}{r_1}p_1\mathcal{L}p_1 = \sum_{i=0}^{\ell-1} \frac{|G|}{r_1}\mathcal{C}_i(p_1,p_1) q^{\ell-1-i} = \sum_{i=0}^{\ell-1}|G|\mathcal{C}^*_i(p_1,p_1) q^{\ell-1-i}\]
and we know each coefficient $|G|\mathcal{C}^*_i(p_1,p_1)$ is either $0$ or $|G_{p_1(1,\ell-i)}|$. A similar simplification happens for each of the other CLNs.

We claim if Alice chooses $p_1 = a_1 a_2 \dots a_\ell$, the odds that Bob wins are always less than $1$.

First note $|G|\mathcal{C}^*_i(p_1,p_1) = (q-\ell+i)!$, yielding
\[\frac{|G|}{r_1} p_1 \mathcal{L} p_1 = (q-\ell)! q^{\ell-1} + (q-\ell+1)! q^{\ell-2} + \dots + (q-1)!.\]
Since $\frac{|G|}{r_1} p_1 \mathcal{L} p_2 \geq 0$, the numerator is at most the right-hand quantity above.

We now focus on the denominator. To begin, note that Bob's pattern has at most $\ell - 1$ distinct letters: if all letters are different, then his pattern is equivalent to Alice's pattern which is prohibited. Thus $|G|\mathcal{C}^*_0(p_2,p_2)= |G_{p_2}| \geq (q-\ell+1)!$. In addition, for any pattern $p_2$ we have $|G|\mathcal{C}^*_{\ell-1}(p_2,p_2) = (q-1)!$, so we obtain the lower bound:
\[\frac{|G|}{r_2} p_2 \mathcal{L} p_2 \geq |G|\mathcal{C}^*_0(p_2,p_2) q^{\ell-1} + |G|\mathcal{C}^*_{\ell-1}(p_2,p_2) \geq (q-\ell+1)! q^{\ell-1} + (q-1)!\]
Finally, note that $|G|\mathcal{C}^*_i(p_2,p_1)$ is equal to $0$, if $i = 0$; and either $0$ or $|G_{p_1(i+1,\ell)}| = (q-\ell+i)!$.
This gives us an upper bound of $(q-\ell+i)!$ on each coefficient $|G|\mathcal{C}^*_i(p_2,p_1)$, so
\[\frac{|G|}{r_2} p_2 \mathcal{L} p_1 \leq \sum_{i=1}^{\ell-1} (q-\ell+i)! q^{\ell-1-i} = (q-\ell+1)! q^{\ell-2} + \dots + (q-1)!.\]
Thus, the denominator $\frac{|G|}{r_2} p_2\mathcal{L} p_2 - \frac{|G|}{r_2} p_2\mathcal{L}p_1$ is at least
\[(q-\ell+1)!q^{\ell-1} - \sum_{i=1}^{\ell-2} (q-\ell+i)! q^{\ell-1-i}.\]

Combining the bounds for the numerator and denominator, we may bound the winning odds for Bob from above:
\begin{align*}
    \frac{\frac{|G|}{r_1}p_1\mathcal{L}p_1 - \frac{|G|}{r_1}p_1\mathcal{L}p_2}{\frac{|G|}{r_2}p_2\mathcal{L}p_2 - \frac{|G|}{r_2}p_2\mathcal{L}p_1} &\leq \frac{(q-\ell)! q^{\ell-1} +  \sum_{i=1}^{\ell-2} (q-\ell+i)! q^{\ell-1-i}}{(q-\ell+1)!q^{\ell-1} - \sum_{i=1}^{\ell-2} (q-\ell+i)! q^{\ell-1-i}} \\
    &= \frac{(q-\ell)! q^{\ell-1} +  (q-\ell+1)!q^{\ell-1}}{(q-\ell+1)!q^{\ell-1} - \sum_{i=1}^{\ell-2} (q-\ell+i)! q^{\ell-1-i}} - 1.
\end{align*}
Using the inequality $(q-\ell+i)! q^{\ell-1-i} < (q-\ell+1)! q^{\ell-2}$, we may bound the right-hand side from above by decreasing the denominator. Namely, the RHS is at most
\[\frac{(q-\ell)! q^{\ell-1} +  (q-\ell+1)!q^{\ell-1}}{(q-\ell+1)!q^{\ell-1} - (\ell-2)(q-\ell+1)! q^{\ell-2}} - 1 = \frac{\ell-1}{q-\ell+1}.\]
The right-hand side is less than $1$ for $\ell < (q+2)/2$. Thus, Bob's odds of winning are always less than $1$, and he has a disadvantage.
\end{proof}

\begin{example}
Fix $(q,\ell) = (6,3)$, so $\ell < (q+2)/2$. Then if Alice picks $abc$, Bob has unfavorable odds no matter what pattern he chooses. Table~\ref{tab:alice-wins-63} shows the odds of Bob winning for all possible choices for Bob when Alice picks $abc$.

\begin{table}[ht!]
    \centering
    \begin{tabular}{c|c}
    \textbf{Pattern} & \textbf{Odds} \\ \hline\hline
    $aaa$ & $1 : 14$ \\
    $aab$ & $1 : 2$ \\
    $aba$ & $1 : 4$ \\
    $abb$ & $1 : 4$
    \end{tabular}
    \caption{For $(q,\ell) = (6,3)$, Bob has losing odds against Alice no matter what pattern he picks.}
    \label{tab:alice-wins-63}
\end{table}

\end{example}

\section{Acknowledgements}

We are grateful to the MIT PRIMES-USA program for giving us the opportunity to conduct this research.


\begin{thebibliography}{99}

\bibitem{STEPPenney} Isha Agarwal, Matvey Borodin, Aidan Duncan, Kaylee Ji, Tanya Khovanova, Shane Lee, Boyan Litchev, Anshul Rastogi, Garima Rastogi, and Andrew Zhao. From Unequal Chance to a Coin Game Dance: Variants of Penney's Game, preprint, \href{https://arxiv.org/abs/2006.13002}{\color{blue}arXiv:2006.13002}.

\bibitem{BCG} Elwyn R.~Berlekamp, John H.~Conway and Richard K.~Guy, \textit{Winning Ways for your Mathematical Plays}, 2nd Edition, Volume 4, AK Peters, 2004, 885.

\bibitem{C} Stanley Collings. Coin Sequence Probabilities and Paradoxes, \textit{Inst. Math A}, \textbf{18} (1982), 227--232.

\bibitem{F} Daniel Felix, Optimal Penney Ante Strategy via Correlation Polynomial Identities, \textit{Electr. J. Comb.}, \textbf{13} (2006).

\bibitem{MGSA} Martin Gardner, \emph{Mathematical Games}, Sci. Am., \textbf{231} (1974), no. 4, 120--125.

\bibitem{MG} Martin Gardner, \textit{Time Travel and Other Mathematical Bewilderments}, W.~H.~Freeman, 1988.

\bibitem{GO} L.J.~Guibas and A.M.~Odlyzko. String Overlaps, Pattern Matching, and Nontransitive Games, \textit{J. Comb. Theory A}, \textbf{30} (1981), no. 2, 183--208.

\bibitem{HN} Steve Humble and Yutaka Nishiyama. Humble-Nishiyama Randomness Game --- A New Variation on Penney's Coin Game, \textit{IMA Mathematics Today}. \textbf{46} (2010), no. 4, 194--195.

\bibitem{WP} Walter Penney, Problem 95. Penney-Ante, \textit{J. Recreat. Math.}, \textbf{2} (1969), 241.

\bibitem{V} Robert W.~Vallin. Sequence game on a roulette wheel, published in \emph{The Mathematics of Very Entertaining Subjects: Research in Recreational Math}, Volume 2, University Press, Princeton, (2017), 286--298.

\end{thebibliography}
\end{document}